\newlength\Colsep
\newtheorem{Theorem}{Theorem}
\newtheorem{Assumption}{Assumption}
\newtheorem{Definition}{Definition}
\newtheorem{Lemma}{Lemma}
\newtheorem{Proposition}{Proposition}
\newtheorem{Corollary}{Corollary}
\newtheorem{Remark}{Remark}
\newcommand{\inr}[1]{\bigl< #1 \bigr>}
\newcommand{\norm}[1]{\left\|#1\right\|}%
\DeclareMathOperator*{\argmin}{argmin}
\def\ds1{\textrm{1\kern-0.25emI}} 
\newcommand \E{\mathbb{E}}
\newcommand \R{\mathbb{R}}
\newcommand \cA{{\cal A}}
\newcommand \cD{{\cal D}}
\newcommand \cF{{\cal F}}
\newcommand \cH{{\cal H}}
\newcommand \cI{{\cal I}}
\newcommand \cL{{\cal L}}
\newcommand \cN{{\cal N}}
\newcommand \cO{{\cal O}}
\newcommand \cP{{\cal P}}
\newcommand \cX{{\cal X}}
\newcommand \cY{{\cal Y}}
\newcommand \bB{{\mathbb B}}
\newcommand \bE{{\mathbb E}}
\newcommand \bI{{\mathbb I}}
\newcommand \bN{{\mathbb N}}
\newcommand \bP{{\mathbb P}}
\newcommand \bR{{\mathbb R}}
\newcommand \bS{{\mathbb S}}
\begin{document}

\begin{frontmatter}

\title{ERM and RERM are optimal estimators for regression problems when malicious outliers corrupt the labels}
\runtitle{RERM with malicious outliers corrupting the labels}

\begin{aug}
  \author{\fnms{CHINOT}  \snm{Geoffrey} \ead[label=e1]{geoffrey.chinot@ensae.fr}}

  \runauthor{CHINOT Geoffrey}

  \affiliation{ENSAE, CREST and Institut Polytechnique de Paris}

\address{ENSAE, 5 avenue Henri Chatelier, 91120, Palaiseau, France\\ 
	\printead{e1}}

\end{aug}

\begin{abstract}
	We study Empirical Risk Minimizers (ERM) and  Regularized Empirical Risk Minimizers (RERM) for regression problems with convex and $L$-Lipschitz loss functions. We consider a setting where $|\cO|$ malicious outliers contaminate the labels. In that case, under a local Bernstein condition, we show that the $L_2$-error rate is bounded by $ r_N + AL |\cO|/N$, where $N$ is the total number of observations, $r_N$ is the $L_2$-error rate in the non-contaminated setting and $A$ is a parameter coming from the local Bernstein condition. When $r_N$ is minimax-rate-optimal in a non-contaminated setting, the rate $r_N + AL|\cO|/N$ is also minimax-rate-optimal when $|\cO|$ outliers contaminate the label. The main results of the paper can be used for many non-regularized and regularized procedures under weak assumptions on the noise. We present results for Huber's M-estimators (without penalization or regularized by the $\ell_1$-norm) and for general regularized learning problems in reproducible kernel Hilbert spaces when the noise can be heavy-tailed.  
\end{abstract}

\begin{keyword}[class=MSC]
\kwd[Primary ]{	62G35  }
\kwd[; secondary ]{	62G08  }
\end{keyword}

\begin{keyword}
\kwd{Regularized empirical risk minimizers, outliers, robustness, minimax-rate-optimality}
\end{keyword}

\end{frontmatter}

\section{Introduction} \label{sec:intro}

Let $(\Omega, \cA, P)$ be a probability space where $\Omega = \cX \times \cY$. $\cX$ denotes the measurable space of the inputs and $\cY \subset \bR$ the measurable space of the outputs. Let $(X,Y)$ be a random variable taking values in $\Omega$ with joint distribution $P$ and let $\mu$ be the marginal distribution of $X$. Let $F$ denote a class of functions $f: \cX \mapsto \cY$. A function $f$ in $F$ is named a \textit{predictor}. The function $\ell : \cY \times \cY \mapsto \bR^+$ is a loss function such that $\ell(f(x),y)$ measures the quality of predicting $f(x)$ while the true answer is $y$. For any function $f$ in $F$ we write $\ell_f(x,y) := \ell(f(x),y) $. For any distribution $Q$ on $\Omega$ and any function $f: \cX \times \cY \mapsto \bR$ we write $Qf = \bE_{(X,Y) \sim Q} [f(X,Y)]$. Let $f \in F$, the risk of $f$ is defined as $R(f) := P \ell_f = \bE_{(X,Y) \sim P} [\ell(f(X),Y)]$. A prediction function with minimal risk is called an \textit{oracle} and is defined as $f^* \in \argmin_{f \in F} P\ell_f$. For the sake of simplicity, it is assumed that the \textit{oracle} $f^*$ exists and is unique. The joint distribution $P$ of $(X,Y)$ being unknown, computing $f^*$ is impossible. Instead one is given a dataset $\cD = (X_i,Y_i)_{i=1}^N$ of $N$ random variables taking values in $\cX \times \cY$. \\

Regularized empirical risk minimization is the most widespread strategy in machine learning to estimate $f^*$. There exists an extensive literature on its generalization capabilities \cite{vapnik1998statistical,koltchinskii2006local,MR2829871,LM_sparsity,ChiLecLer:2018}. However, in the past few years, many papers have highlighted its severe limitations. One main drawback is that a single outlier $(X_o,Y_o)$ (in the sense that nothing is assumed on $(X_o,Y_o)$) can deteriorate the performances of RERM. Consequently, RERM is in general not robust to outliers. However, the question below naturally follows: 

\begin{center}
	What happens if only the labels $(Y_i)_{i=1}^N$ are contaminated ? 
\end{center}
For example, in~\cite{dalalyan2019outlier}, the authors raised the question whether it is possible to attain optimal rates of convergence in outlier-robust sparse regression using regularized empirical risk minimization. They consider the model, $Y_i = \inr{X_i,t^*} + \epsilon_i$, where $X_i$ is a Gaussian random vector in $\bR^p$ with a covariance matrix satisfying the Restricted Eigenvalue condition \cite{van2009conditions} and $t^*$ is assumed to be $s$-sparse. The non-contaminated noise is $\epsilon_i \sim \cN(0,\sigma^2)$, while it can be anything when malicious outliers contaminate the labels. The authors prove that the $\ell_1$-penalized empirical risk minimizer based on the Huber's loss function has an error rate of the order
\begin{equation} \label{rate_arnak}
\sigma \bigg (\sqrt{s\frac{\log(p/\delta)}{N}} + \frac{|\cO| \log(N/\delta)}{N} \bigg) \enspace,
\end{equation}
with probability larger that $1-\delta$, where $|\cO|$ is the number of outliers. Up to a logarithmic factor, the RERM associated with the Huber loss function for the problem or sparse-linear regression is minimax-rate-optimal when $|\cO|$ malicious outliers corrupt the labels.

\subsection{Setting}
In this paper, we consider a setup where $|\cO|$ outputs can be contaminated. More precisely, let $\cI \cup \cO$ denote an unknown partition of $ \{1,\cdots,N\}$ where $\cI$ is the set of \textbf{informative} data and $\cO $ is the set of \textbf{outliers}.
\begin{Assumption}\label{assum:distri}
	$(X_i,Y_i)_{i\in \cI}$ are i.i.d with a common distribution $P$. The random variables $(X_i)_{i =1}^N$ are i.i.d with law $\mu$.
\end{Assumption}
Nothing is assumed on the labels $(Y_i)_{i \in \cO}$. They can be adversarial outliers making the learning as hard as possible. Without knowing the partition $\cI \cup \cO$, the goal is to construct an estimator $\hat f_N$ that approximates/estimates the \textit{oracle} $f^*$. A way of measuring the quality of an estimator is via the \textbf{error rate} $\|\hat f_N-f\|_{L_2(\mu)}$ or \textbf{the excess risk} $P\cL_{\hat{f}} := P\ell_{\hat{f}_N} - P \ell_{f^*}$. 
We assume the following:
\begin{Assumption}\label{assum:convex}
	The class $F$ is convex.
\end{Assumption}
A natural idea to construct robust estimators when the labels might be contaminated is to consider $L$-Lipschitz loss functions~\cite{huber1992robust,huber2011robust} that is a losses satisfying 
\[
\big | \ell \big(  f(x),y \big) - \ell \big(  g(x),y \big) \big| \leq L | f(x)- g(x)| \enspace,
\]
for every $x,y \in \cX \times \cY $ and $f,g \in F$. Moreover, for computational purposes we also focus on convex loss functions~\cite{van2016estimation}. 
\begin{Assumption}\label{assum:lip_conv}
	There exists $L>0$ such that, for any $y \in \cY$, ${\ell}(\cdot,y)$ is $L$-Lipschitz and convex.
\end{Assumption}
Recall that the Empirical Risk Minimizer (ERM) and the Regularized Empirical Risk Minimizer (RERM) are respectively defined as
\begin{equation*}
\hat{f}_N \in \argmin_{f \in F} \frac{1}{N} \sum_{i=1}^N \ell(f(X_i),Y_i) \quad \mbox{and} \quad  \hat{f}_N^{\lambda} \in \argmin_{f \in F} \frac{1}{N} \sum_{i=1}^N \ell(f(X_i),Y_i)  + \lambda \|f\| \enspace,
\end{equation*}
where $\lambda >0$ is a tuning parameter and $\|\cdot\|$ is a norm. Under Assumptions~\ref{assum:convex} and~\ref{assum:lip_conv} the ERM and RERM are computable using tools from convex optimization~\cite{boyd2004convex}.\\

\subsection{Our contributions}

As exposed in~\cite{dalalyan2019outlier}, in a setting where $|\cO|$ outliers contaminate only the labels, RERM with the Huber loss function is (nearly) minimax-rate-optimal for the sparse-regression problem when the noise and design of non-contaminated data are both Gaussian. It leads to the following questions:

\begin{enumerate}
	\item 	\textit{Is the (R)ERM optimal for other loss functions and regression problems than the sparse-regression when malicious outliers corrupt the labels ?}
	\item \textit{Is the Gaussian assumption on the noise necessary ?}
\end{enumerate}
Based on the previous works~\cite{ChiLecLer:2018,chinot2019robust,ChiLecLer:2019,alquier2017estimation}, we study both ERM and RERM for regression problems when the penalization is a norm and the loss function is simultaneously convex and Lipschitz (Assumption~\ref{assum:lip_conv}) and show that:
\begin{tcolorbox}
	In a framework where  $|\cO|$ outliers may contaminate the labels, under a local Bernstein condition, the error rate for both ERM and RERM can be bounded by
	\begin{equation} \label{opt_bound}
	r_N +  AL  \frac{|\cO|}{N} \enspace,
	\end{equation}
	where $N$ is the total number of observations, $L$ is the Lipschitz constant from Assumption~\ref{assum:lip_conv}, $r_N$ is the error rate in a non-contaminated setting and $A$ is a constant coming from the local Bernstein condition.
\end{tcolorbox}
When the proportion of outliers $|\cO|/N$ is smaller than the error rate $r_N/(AL)$, both ERM and RERM behave as if there was no contamination. The result holds for any loss function that is simultaneously convex and Lipschitz and not only for the Huber loss function. We obtain theorems that can be used for many well-known regression problems including structured high-dimensional regression (see Section~\ref{app:huber_regu}), non-parametric regression (see Section~\ref{app:rkhs}) and  matrix trace regression (using the results from~\cite{alquier2017estimation}). As a proof of concept, for the problem of sparse-linear regression, we improve the rate~\eqref{rate_arnak}, even when the noise may be heavy-tailed. The next question one may ask is the following:
\begin{enumerate} \setcounter{enumi}{1}
	\item 	\textit{ Is the general bound~\eqref{opt_bound} minimax-rate-optimal when $|\cO|$ malicious outliers may corrupt the labels ?}
\end{enumerate}
 To answer question 2, we use the results from~\cite{chen2018robust}. The authors established a general minimax theory for the $\varepsilon$-contamination model defined as $P_{(\varepsilon,\theta,Q)} = (1-\varepsilon)P_{\theta} + \varepsilon Q$ given a general statistical experiment $\{P_{\theta}, \theta \in \Theta \}$. A deterministic proportion $\varepsilon$ of outliers with same the distribution $Q$ contaminates $P_{\theta}$. When $Y = f_{\theta}(X) + \epsilon$, $\theta \in \Theta$  and following the idea of~\cite{chen2018robust}, we show in Section~\ref{sec:minimax} that the lower minimax bounds for regression problems in the $\varepsilon$-contamination model are the same when
\begin{enumerate}
	\setlength{\itemsep}{1pt}
	\setlength{\parskip}{0pt}
	\setlength{\parsep}{0pt}	
	\item  Both the design $X$ and the response variable $Y$ are contaminated.
	\item Only the response variable $Y$ is contaminated. 
\end{enumerate}
Since in our setting, outliers do not necessarily have the same distribution $Q$, it is clear that a lower bound on the risk in the $\varepsilon$-contamination model implies a lower bound when $|\cO| = \varepsilon N$ arbitrary outliers contaminate the dataset 
As a consequence, for regression problems, minimax-rate-optimal bounds in the $\varepsilon$-contamination model are also optimal when $N\varepsilon$ malicious outliers corrupt only the labels.
\begin{tcolorbox}
	When the bound~\eqref{opt_bound} is minimax-rate-optimal for regression problems in the $\varepsilon$-contamination model with $\varepsilon= |\cO|/N$, then it is also minimax-rate-optimal when $|\cO|$ malicious outliers corrupt the labels. 
\end{tcolorbox}
The results are derived under the local Bernstein condition introduced in~\cite{ChiLecLer:2018}. This condition enables to obtain fast rates of convergence, even when the noise is heavy-tailed. As a proof of concept, we study Huber's $M$-estimators in $\bR^p$ (non-penalized or regularized by the $\ell_1$-norm) when the noise is Cauchy. In these cases, the error rates are respectively
\[
L \bigg(\sqrt{\frac{\textnormal{Tr}(\Sigma)}{N}} + \frac{|\cO|}{N} \bigg) \quad \textnormal{and} \quad  L \bigg( \sqrt{\frac{s\log(p)}{N}} + \frac{|\cO|}{N} \bigg) \enspace,
\]
where $\Sigma$ is the covariance matrix of the design $X$. We also study learning problems in general Reproducible Kernel Hilbert Space (RKHS). We derive error rates depending on the spectrum of the integral operator as in  \cite{smale2007learning,mendelson2010regularization,caponnetto2007optimal} without assumption on the design and when the noise is heavy-tailed (see section~\ref{app:huber_regu}). \\

\begin{Remark}
	The general results hold for any Lipschitz and convex loss function. However, for the sake of simplicity, we present applications only for the Huber loss function. Using results from~\cite{ChiLecLer:2018}, it is possible to apply our main results to other Lipschitz loss functions. 
\end{Remark}

\subsection{Related Literature}
Regression problems with possibly heavy-tailed data or outliers cannot be handled by classical least-squares estimators. This lack of robustness of least-squares estimators gave birth to the theory of robust statistics developed by Peter Huber~\cite{huber1992robust,huber2011robust,huber1967behavior}, John Tukey~\cite{tukey1960survey,tukey1962future} and Frank Hampel~\cite{hampel1971general,hampel1974influence}. The most classical alternatives to least-squares estimators are M-estimators. They consist in replacing the quadratic loss function by other loss functions, less sensitive to outliers~\cite{maronna1976robust,yohai1979asymptotic}. \\
Robust statistics has attracted a lot of attention in the past few years both in the computer science and the statistical communities. 
For example, although estimating the mean of a random vector in $\bR^p$ is one of the oldest and fundamental problems in statistics, it is still a very active research area. Surprisingly, optimal bounds for heavy-tailed data have been obtained only recently~\cite{lugosi2019sub}. However, their estimator cannot be computed in practice. Using semi-definite programming (SDP),~\cite{hopkins2018sub} obtained optimal bounds achievable in polynomial time. In a recent works, still using SDP,~\cite{lecue2019robust} designed an algorithm computable in nearly linear time, while~\cite{lei2019fast} developed the first tractable optimal algorithm not based on the SDP. 
\\
In the meantime, another recent trend in robust statistics has been to focus on finite sample risk bounds that are minimax-rate-optimal when $|\cO|$ outliers contaminate the dataset. For example, for the problem of mean estimation, when $|\cO|$ malicious outliers contaminate the dataset and the non-contaminated data are assumed to be sub-Gaussian, the optimal rate (measured in Euclidean norm) scales as $\sqrt{p/N} + |\cO|/N$.  In~\cite{chen2018robust}, the authors developed a general analysis for the $\varepsilon$-contamination model. In~\cite{chen2016general}, the same authors proposed an optimal estimator when $|\cO|$ outliers with the same distribution contaminate the data. In \cite{diakonikolas2019efficient}, the authors focused on the problem of high-dimensional linear regression in a robust model where an $\varepsilon$-fraction of the samples can be adversarially corrupted. Robust regression problems have also been studied in~\cite{cheng2019high,diakonikolas2019robust,liu2018high,bhatia2015robust}. Above-mentioned articles assume corruption both in the design and the label. In such a corruption setting ERM and RERM are known to be poor estimators. \\

In~\cite{dalalyan2019outlier}, the authors raised the question whether it is possible to attain optimal rates of convergence in sparse regression using regularized empirical risk minimization when a proportion of malicious outliers contaminate only the labels. They studied $\ell_1$ penalized Huber's $M$-estimators. This work is the closest to our setting and reveals that when only the labels are contaminated, simple procedures, such as penalized Huber's $M$ estimators, still perform well and are minimax-rate-optimal. Their proofs rely on the fact that non-contaminated data are Gaussian. Our approach is different, more general and uses the control of stochastic processes indexed by the class $F$. \\

Other alternatives to be robust both for heavy-tailed data and outliers in regression have been proposed in the literature such as Median Of Means (MOM) based methods \cite{lecue2017robust,lecue2018robust,ChiLecLer:2018}. However such estimators are difficult to compute in practice and can lead to sub-optimal rates. For instance, for sparse-linear regressions in $\bR^p$ with a sub-Gaussian design, MOM-based estimators have an error rate of the order $L\big(\sqrt{s\log(p)/N} +  \sqrt{|\cO|/N}\big)$ (see~\cite{ChiLecLer:2018}) while the optimal dependence with respect to the number of outliers is $L \big(\sqrt{s\log(p)/N} + |\cO| /N \big)$. Finally, there was a recent interest in robust iterative algorithms. It was shown that robustness of stochastic approximation algorithms can be enhanced by using robust stochastic gradients. For example, based on the geometric median~\cite{minsker2015geometric},~\cite{chen2017distributed} designed a robust gradient descent scheme. More recently,~\cite{juditsky2019algorithms} showed that a simple truncation of the gradient enhances the robustness of the stochastic mirror descent algorithm. \\

The paper is organized as follows. In Section~\ref{sec:no_reg}, we present general results for non-regularized procedures with a focus on the example of the Huber's $M$-estimator in $\bR^p$. Section~\ref{sec:high_dim} gives general results for RERM that we apply to $\ell_1$-penalized Huber's $M$-estimators with isotropic design and regularized learning in RKHS. Section~\ref{sec:simu} presents simple simulations to illustrate our theoretical findings. In section~\ref{sec:minimax}, we show that the minimax lower bounds for regression problems in the $\varepsilon$-contamination model are the same when 1) both the design $X$ and the labels are contaminated and 2) when only the labels are contaminated. Section~\ref{RE_huber} shows that we can extend the results for $\ell_1$-penalized Huber's $M$-estimator when the covariance matrix of the design $X$ satisfies a Restricted Eigenvalue condition. Finally, the proofs of the main theorems are presented in Section~\ref{sec:proof_main_thm}.

\paragraph{Notations} 
All along the paper, for any $f$ in $F$, $\|f\|_{L_2}$ will be written instead of $\|f\|_{L_2(\mu)}  =\int f^2 d \mu $. We also write $L_2$ instead of $L_2(\mu)$. Let $\bB_2$ and $\bS_2$ be respectively the unit ball and the unit sphere with respect to the metric $L_2(\mu)$. The letter $c$ will denote an absolute constant whose value may change from one line to another. For a set $T$, its cardinality is denoted $|T|$. For two real numbers $a,b$, $ a \vee b$ and $a \wedge b$ denote respectively $\max(a,b)$ and $\min(a,b)$. For any set $H$ for which it makes sense, let $H + f^{*} = \{ h+f^{*}, h \in H   \}$, $H - f^{*} = \{ h-f^{*}, h \in H   \}$.

\section{Non-regularized procedures} \label{sec:no_reg}
In this section, we study the Empirical Risk Minimizer (ERM) where we recall the definition below:
\begin{equation} \label{def:estimator}
\hat{f}_N = \arg \min_{f \in F} \frac{1}{N} \sum_{i=1}^{N} \ell(f(X_i),Y_i) \enspace.
\end{equation}
We establish bounds on  the error rate $\| \hat{f}_N - f^*\|_{L_2}$ and the excess risk $P\cL_{\hat{f}_N} := P\ell_{\hat{f}_N} - P\ell_{f^*}$ in two different settings 1) when $F-f^*$ is sub-Gaussian, and 2) when $F-f^*$ is locally bounded. We derive fast rates of convergence under very weak assumptions. 

\subsection{Complexity measures and parameters}
The ERM performs well when the empirical excess risk $f \mapsto P_N \cL_f$ uniformly concentrates around its expectation $f \mapsto P\cL_f$. From Assumption~\ref{assum:lip_conv}, such uniform deviation results depend on the complexity of the class $F$. There exist different measures of complexity. In this section, we introduce the two main complexity measures we will use throughout this article. Let $H \subseteq F \subseteq L_2$.
\begin{enumerate}
	\item  Let $(G_h)_{h\in H}$ be the centered Gaussian process indexed by $H$ where the covariance structure of $(G_h)_{h\in H}$ is given by  $ \E (G_{h_1}- G_{h_2})^2 =  \E(h_1(X)-h_2(X))^2$ for all $h_1,h_2\in H$. The \textbf{Gaussian mean-width} of $H$ is defined as
	\begin{equation} \label{def_gaussian_mean_width}
	w(H) = \E \sup_{h\in H} G_h \enspace. 
	\end{equation}
	For example, for $\mu = \cN(0,\Sigma)$, $T \subset \bR^p$ and $F = \{ \inr{t,\cdot}, t \in T  \}$, we have $w(F) = \bE \sup_{t \in T }\Sigma^{1/2} \inr{t, \mathbf{G}}$, where $\mathbf{G} \sim \cN(0,I_p)$.\\
	
	\item Let $S \subseteq \{1,\cdots,N\}$ and $(\sigma_i)_{i \in S}$ be i.i.d Rademacher random variables ($P(\sigma_i = 1) = P(\sigma_i = -1) = 1/2$ ) independent to $(X_i)_{i \in S}$. The \textbf{Rademacher complexity} of $H$, indexed by $S$ is defined as
	\begin{equation} \label{def_Rademacher_comp}
	\textnormal{Rad}_{S}(H) = \bE \sup_{h \in H} \sum_{i \in S} \sigma_i h(X_i) \enspace,
	\end{equation}
	where this expectation is taken both with respect to $(X_i)_{i \in S}$ and $(\sigma_i)_{i  \in S}$. The Rademacher complexity has been extensively used in the literature as a measure of complexity~\cite{bartlett2005local,bartlett2006empirical,bousquet2002some}.
\end{enumerate}
Depending on the context, we will use either the Gaussian mean-width or the Rademacher complexity as a measure of complexity. In particular, the Gaussian mean-width naturally appears when dealing with sub-Gaussian classes of functions (see Definition~\ref{def:sub-gauss-class}) while the Rademacher complexity is convenient when dealing with bounded class of functions. 
\begin{Definition} \label{def:sub-gauss-class}
	A class $H \subseteq F \subset L_2$ is called B-\textbf{sub-Gaussian} if for every $\lambda >0$ and $h \in H$
	\begin{equation*}
	\mathbb E \exp( \lambda h(X)/ \|h\|_{L_2} ) \leq \exp(B^2\lambda^2 /2)\enspace.
	\end{equation*} 
\end{Definition}
Now, let us define the two complexity parameters that will drive the rates of convergence of the ERM. 
\begin{Definition}\label{def:comp}
	For any $A>0$, let
	\begin{equation*}
	r_{\cI}^{SG}(A) = \inf \big \{ r>0 :   ALw \big(F\cap  (f^*+ r\bB_2) \big) \leq  c \sqrt{|\cI|} r^2 \big\}
	\end{equation*}
	and 
	\begin{equation*}
	r_{\cI}^B(A) = \inf \bigg\{ r>0 :AL  \textnormal{Rad}_{\cI} \big(F \cap (f^*+ r\bB_2) \big) \leq  c |\cI| r^2 \bigg\}
	\end{equation*}
	where $c>0$ denotes an absolute constant and $L$ is the Lipschitz constant from Assumption~\ref{assum:lip_conv}. Finally, for any $A,\delta >0$ set  
	\begin{equation} \label{eq:comp_par_sg}
	r^{SG}(A,\delta) \geq  c \bigg( r^{SG}_{\cI} (A) \vee AL \sqrt \frac{\log(1/\delta)}{N} \vee AL \frac{|\mathcal O |}{N} \bigg) \enspace,
	\end{equation}
	and
	\begin{equation} \label{eq:comp_par_bounded}
	r^B(A,\delta) \geq  c \bigg( r_{\cI}^B (A) \vee AL \sqrt \frac{\log(1/\delta)}{N} \vee AL \frac{|\mathcal O |}{N} \bigg) \enspace,
	\end{equation}
	where $c>0$ is an absolute constant. 
\end{Definition} 
In Section~\ref{section_local_ber_results} we use $r^{SG}(A,\delta)$ when the class $F-f^*$ is assumed to be sub-Gaussian while we use $r^B(A,\delta)$ when the class $F-f^*$ is (locally) bounded.  

\subsection{Local Bernstein conditions and main results} \label{section_local_ber_results}

To obtain fast rates of convergence, it is necessary to impose assumptions on the distribution $P$. For instance, the margin assumptions \cite{MR1765618,MR2051002,MR3526202} and the Bernstein conditions from \cite{MR2240689} have been widely used in statistics and learning theory. 
A class $F$ is called $(1,A)$ Bernstein~\cite{bartlett2006empirical} if for all $f$ in $F$, $P (\cL_f)^2 \leq A P\cL_f$.  Under Assumption~\ref{assum:lip_conv}, $F$ is $(1,AL^2)$ Bernstein if for all $f$ in $F$, $\|f-f^*\|_{L_2}^2 \leq A P\cL_f$. This condition means that the variance of the problem is not too large. In this paper, we use the second version of the Bernstein condition stated above. Moreover, in the spirit of~\cite{ChiLecLer:2018}, we introduce the (much) weaker \textbf{local Bernstein assumption}. Contrary to the global Bernstein condition, our assumption is required to hold only locally around the \textit{oracle} $f^*$ and not for every $f$ in $F$. As we will see in applications, it allows to consider heavy-tailed noise without deteriorating the convergence rates. 

\begin{Assumption}\label{assum:fast_rates} Let $\delta > 0$ and $r(\cdot,\delta) \in \{ r^{SG}(\cdot,\delta),  r^{B}(\cdot,\delta)   \}$, where $r^{SG}(\cdot,\delta)$ and $r^{B}(\cdot,\delta)$ are respectively defined in Equations~\eqref{eq:comp_par_sg} and~\eqref{eq:comp_par_bounded}. Assume that there exists a constant $A > 0$ such that for all $f\in F \cap \big(f^* + r(A,\delta) \bS_2 \big)$,  we have $\|f-f^{*}\|_{L_2}^2\leq AP\mathcal{L}_f $.
\end{Assumption}
Assumption~\ref{assum:fast_rates} holds locally around the \textit{oracle} $f^*$. The bigger $r(\cdot,\delta)$ the stronger Assumption~\ref{assum:fast_rates}. Assumption~\ref{assum:fast_rates} has been extensively studied in~\cite{ChiLecLer:2018,ChiLecLer:2019} for different Lipschitz and convex loss functions. For the sake of brevity, in applications we will only focus on the Huber loss function in this paper. We are now in position to state the main theorem for the ERM. 
\begin{Theorem} \label{thm:main}
	Let $\cI \cup \cO$ be a partition of $\{1,\cdots,N \}$ where $|\cO| \leq |\cI|$. Grant Assumptions~\ref{assum:distri},~\ref{assum:convex} and~\ref{assum:lip_conv}. Let $\delta \in (0,1)$. 
	\begin{enumerate}
		\item Let us assume that the class $F-f^*$ is $1$-sub-Gaussian and that Assumption~\ref{assum:fast_rates} holds for $r(\cdot, \delta) = r^{SG}(\cdot, \delta)$ and $A >0$. With probability larger than $1-\delta$, the estimator $\hat{f}_N$ defined in Equation~\eqref{def:estimator} satisfies
		\begin{align*}
		\|\hat{f}_N - f^*\|_{{L_2}} \leq r^{SG}(A,\delta)  \quad \mbox{and} \quad   P\cL_{\hat{f}_N} \leq    c\frac{(r^{SG}(A,\delta))^2}{A} \enspace.
		\end{align*}
		\item Let us assume that Assumption~\ref{assum:fast_rates} holds for $r(\cdot, \delta) = r^{B}(\cdot, \delta)$ and $A >0$ and that 
		\begin{equation} \label{cond_locally_bounded}
			\forall f \in F \cap \big( f^* + r^B(A,\delta) \bB_2  \big) \textnormal{ and } x \in \cX \quad |f(x)-f^*(x)| \leq 1	
			\end{equation}
		Then, with probability larger than $1-\delta$, the estimator $\hat{f}_N$ defined in Equation~\eqref{def:estimator} satisfies
		\begin{align*}
		\|\hat{f}_N - f^*\|_{{L_2}} \leq r^B(A,\delta)  \quad \mbox{and} \quad   P\cL_{\hat{f}_N} \leq    c\frac{(r^B(A,\delta))^2}{A}
		\end{align*}
	\end{enumerate}	
\end{Theorem}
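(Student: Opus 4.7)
The plan is a standard convex-localization argument tailored to $L$-Lipschitz losses with adversarial labels. Because $F$ is convex and $\ell(\cdot,y)$ is convex (Assumptions~\ref{assum:convex},~\ref{assum:lip_conv}), the map $f \mapsto P_N \ell_f$ is convex on $F$. If $\|\hat f_N - f^*\|_{L_2} > r$, then $\tilde f := f^* + (r/\|\hat f_N - f^*\|_{L_2})(\hat f_N - f^*)$ lies in $F \cap (f^*+r\bS_2)$ and, by convexity combined with $P_N\ell_{\hat f_N} \leq P_N \ell_{f^*}$, satisfies $P_N \cL_{\tilde f} \leq 0$. Consequently, it suffices to prove that with probability at least $1-\delta$,
\[
\inf_{f \in F \cap (f^* + r\bS_2)} P_N \cL_f > 0,
\]
for $r = r^{SG}(A,\delta)$ in Part~1 and $r = r^B(A,\delta)$ in Part~2. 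The excess-risk bound $P\cL_{\hat f_N} \leq c (r^{\cdot}(A,\delta))^2/A$ is then obtained from $P_N \cL_{\hat f_N} \leq 0$ together with a twin uniform control of $(P-P_N)\cL_f$ on the localized ball $F\cap(f^*+r\bB_2)$ (into which $\hat f_N$ has just been shown to fall).

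I would decompose the empirical measure along the unknown partition $\cI \cup \cO$:
\[
P_N \cL_f \;=\; \frac{|\cI|}{N}\, P_\cI \cL_f \;+\; \frac{1}{N}\sum_{i \in \cO} \cL_f(X_i,Y_i), \qquad P_\cI := \frac{1}{|\cI|}\sum_{i \in \cI} \delta_{(X_i,Y_i)}.
\]
The informative term should be lower-bounded via Assumption~\ref{assum:fast_rates}, which on the sphere gives $P\cL_f \geq \|f-f^*\|_{L_2}^2/A = r^2/A$, combined with a uniform empirical-process bound
\[
\sup_{f \in F \cap (f^*+r\bS_2)} |P_\cI \cL_f - P\cL_f| \;\leq\; \frac{r^2}{2A}
\]
with probability $\geq 1-\delta/2$. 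This is where the two complexity measures enter: symmetrization and Ledoux--Talagrand contraction (using the $L$-Lipschitzness of $\ell$) reduce the expectation of the supremum to $L \cdot \mathrm{Rad}_\cI(F \cap (f^*+r\bB_2))/|\cI|$, which, for sub-Gaussian $F-f^*$, is further bounded by $L\, w(F \cap (f^*+r\bB_2))/\sqrt{|\cI|}$ via Gaussian comparison. The radii $r^{SG}_\cI(A)$ and $r^B_\cI(A)$ of Definition~\ref{def:comp} are calibrated exactly so that these expected complexities are at most (a small constant times) $r^2/A$, while the extra term $AL\sqrt{\log(1/\delta)/N}$ in~\eqref{eq:comp_par_sg}--\eqref{eq:comp_par_bounded} absorbs the deviation from the mean, obtained from Talagrand's inequality in the bounded case (Part~2, using~\eqref{cond_locally_bounded}) and its sub-Gaussian counterpart (e.g.\ Adamczak/Mendelson) in Part~1.

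For the outlier contribution, nothing is known about $(Y_i)_{i\in\cO}$, so the only usable structure is Lipschitzness:
\[
\Big|\tfrac{1}{N}\sum_{i \in \cO} \cL_f(X_i,Y_i)\Big| \;\leq\; \frac{L}{N}\sum_{i \in \cO} |f(X_i) - f^*(X_i)|.
\]
Crucially, Assumption~\ref{assum:distri} guarantees that $(X_i)_{i=1}^N$ are i.i.d.\ even on $\cO$, so $\bE|f(X)-f^*(X)| \leq \|f-f^*\|_{L_2} = r$ and the expectation of the supremum is at most $L|\cO|r/N$ plus (by symmetrization and contraction of $|\cdot|$) a Rademacher/Gaussian complexity term over $\cO$, dominated by the one over $\cI$ since $|\cO|\leq|\cI|$. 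The key calibration $r \geq c A L |\cO|/N$ in Definition~\ref{def:comp} then gives
\[
\frac{L|\cO|r}{N} \;\leq\; \frac{r^2}{A},
\]
so the expected outlier contribution is absorbed by the informative slack $r^2/(2A)$; deviations are again handled by a bounded/sub-Gaussian concentration inequality at level $\delta/2$. Combining both controls with $|\cI|/N \geq 1/2$ yields $P_N \cL_f > 0$ on the sphere, and hence the error-rate bound; feeding $\hat f_N \in f^*+r\bB_2$ into the analogous uniform bound over the \emph{ball} gives the excess-risk bound.

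The main obstacle is the concentration step, not the in-expectation computation. In the sub-Gaussian case one must pass from a Gaussian mean-width bound on $\bE \sup$ to a high-probability statement without boundedness of $\cL_f$; this requires a Talagrand-type inequality valid for (unbounded) sub-Gaussian classes, and care in tracking how $L$ propagates through contraction. A secondary technicality is that all complexities are written over the \emph{unknown} informative set $\cI$; since only $|\cI|$ enters the radii $r^{SG}_\cI(A)$ and $r^B_\cI(A)$, and $(X_i)_{i\in\cI}$ are i.i.d., the entire argument can be run conditionally on the partition $\cI\cup\cO$, so this causes no real difficulty once the bounds are formulated index-set-wise.
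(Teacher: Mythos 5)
Your proposal is correct and follows the same architecture as the paper's proof: the convexity/homogeneity reduction to the sphere $F\cap(f^*+r\bS_2)$, the decomposition of $P_N\cL_f$ along the unknown partition $\cI\cup\cO$, the local Bernstein condition giving $P\cL_f\geq r^2/A$ on the sphere, the treatment of the outlier block using only Lipschitzness plus the fact that $(X_i)_{i\in\cO}$ are still distributed as $\mu$ (so $P|f-f^*|\leq\|f-f^*\|_{L_2}=r$), the calibration of $r$ so that $AL|\cO|/N$ and the deviation term are absorbed by $r^2/A$, and the final excess-risk bound obtained by re-using the uniform control on the ball once $\hat f_N\in f^*+r\bB_2$ is established. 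Your Part 2 matches the paper exactly (symmetrization, contraction, then Bousquet/Talagrand for bounded classes). The one place where you deviate is the concentration step of Part 1: you propose to bound the expectation of the supremum by symmetrization, contraction and Gaussian comparison and then to concentrate via an Adamczak/Mendelson-type inequality for unbounded classes, whereas the paper treats the localized processes $f\mapsto(P-P_{\cI})(\ell_f-\ell_{f^*})$ and $f\mapsto(P-P_{\cO})|f-f^*|$ in one shot by generic chaining for processes with sub-Gaussian increments (Ledoux--Talagrand, Theorem 11.13), after verifying the $\psi_2$-Lipschitz increment condition through symmetrization and contraction applied to Laplace transforms together with the $1$-sub-Gaussianity of $F-f^*$. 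The chaining route yields exactly the deviation $c\frac{L}{\sqrt{|\cI|}}\bigl(w(F\cap(f^*+r\bB_2))+r\sqrt{\log(1/\delta)}\bigr)$ that the definition of $r^{SG}(A,\delta)$ is built to absorb; with an Adamczak-type inequality you would instead pick up envelope terms of the form $\|\max_i\sup_f|f-f^*|(X_i)\|_{\psi_1}\log(1/\delta)/N$, i.e.\ potential $\log N$ losses that do not appear in the statement and would have to be checked against the budget $r^2/A$. So if you execute Part 1, replace that step by the $\psi_2$-chaining argument (or verify carefully that the extra terms are dominated); otherwise your plan reproduces the paper's proof.
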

The constant $1$ in the sub-Gaussian assumption or in Equation~\eqref{cond_locally_bounded} may be replaced by any other constants. \\

There are two cases in Theorem~\ref{thm:main}:
\begin{enumerate}
	\item When class $F-f^*$ is $1$-sub-Gaussian, the complexity-parameter driving the convergence rates depends on the Gaussian mean-width.
	\item When the class $F-f^*$ is locally bounded (see Equation~\ref{cond_locally_bounded}), the complexity-parameter driving the convergence rates depends on the Rademacher complexity. Equation~\eqref{cond_locally_bounded} requires $L_{\infty}$-boundedness only for functions $f$ in $F \cap (f^* + r^B(A,\delta) \bB_2)$. For example, let $F = \{ \inr{t,\cdot}, t \in \bR^p  \}$ and $X$ be an isotropic random variable, that is $\bE \inr{X,t}^2 = \|t\|^2_2$ for all $t \in \bR^p$. Let $t^*$ be such that $f^*(\cdot) = \inr{t^*,\cdot}$ and $f$ be in $F \cap (f^* + r^B(A,\delta) \bB_2)$. Then,  $|(f-f^*)(x)| = |\inr{t-t^*,x}| \leq \|t-t^*\|_2 \|x\|_2  \leq \|x\|_2  r^B(A,\delta) $. 
	Simple computations (see~\cite{koltchinskii2006local}) show that when $r^B(A,\delta) = r_{\cI}^B(A)$, the complexity parameter $r^B(A,\delta)$ is of the order $\sqrt{p/|\cI|}$ and Equation~\eqref{cond_locally_bounded} holds if
	\[
	\|x\|_2 \leq c \sqrt{|\cI|/p} 
	\]
	The more informative data we have, the larger the euclidean radius of $\cX$ can be.\\
\end{enumerate}
In the case of equality in Equations~\eqref{eq:comp_par_sg} or~\eqref{eq:comp_par_bounded}, Theorem~\ref{thm:main} holds if the local Bernstein condition~\ref{assum:fast_rates} is satisfied for all functions $f$ in $F$ such that:
\[
\|f-f^*\|_{L_2} = c\bigg( r_{\cI}(A) \vee  AL \frac{|\cO|}{N }+ AL \sqrt{\frac{\log (1/\delta)}{N}} \bigg) \enspace,
\]
that is on an $L_2$-sphere around $f^*$ with a radius equal to the rate of convergence. The bound on the error rate can be decomposed as the error rate in the non-contaminated setting and the proportion of outliers $AL|\cO|/N$. As long as $AL |\cO|/N \leq r_{\cI}(A)$, the error rate remains constant and equal the one in a non-contaminating setting. On the other hand, if $AL |\cO|/N \geq r_{\cI}(A)$, the error rate in the contaminated setting becomes linear with respect to the proportion of outliers $|\cO| / N$. Theorem~\ref{thm:main} shows that when $r_{\cI}(A)$ is minimax-rate-optimal in a non-contaminated setting, the ERM remains optimal when less than $N r_{\cI}(A)/(AL)$ outliers contaminate the labels. We also show in Section~\ref{app:hub_reg} that this dependence with respect to the number of outliers is minimax-rate-optimal for linear regression in $\bR^p$ when $|\cO|$ outliers may corrupt the labels.

\subsection{A  concrete example: the class of linear functionals in $\bR^p$ with Huber loss function} \label{app:hub_reg}

To put into perspective the results obtained in Sections~\ref{section_local_ber_results}, we apply Theorem~\ref{thm:main} in the sub-Gaussian framework for linear regression in $\bR^p$. Let $F = \{ \inr{t, \cdot}, t \in  \bR^p  \}$, which satisfies assumption~\ref{assum:convex}. Let $(X_i,Y_i)_{i =1}^N$ be random variables defined by the following linear model:
\begin{equation} \label{linear_model_hub}
Y_i = \inr{X_i, t^*} + \epsilon_i \enspace,
\end{equation}
where $(X_i)_{i=1}^N$ are i.i.d Gaussian random vectors in $\bR^p$ with zero mean and covariance matrix $\Sigma$. The random variables $(\epsilon_i)_{i \in \cI}$ are assumed to be symmetric and independent to $(X_i)_{i=1}^N$. For the moment, nothing more is assumed for $(\epsilon_i)_{i \in \cI}$. 
It is clear that assumption~\ref{assum:distri} holds. The Empirical Risk Minimizer with the Huber loss function is defined as
\begin{equation} \label{erm_huber}
\hat t_N^{\gamma} =  \argmin_{t \in \bR^p}  \frac{1}{N} \sum_{i=1}^N\ell^{\gamma} (\inr{X_i,t},Y_i)
\end{equation}
where $\ell^{\gamma}(\cdot,\cdot)$ is the Huber loss function defined for any $\gamma >0$, $u,y\in\cY = \bR$, by
\[
\ell^{\gamma}(u,y) =  
\begin{cases}
\frac{1}{2}(y-u)^2&\text{ if }|u-y| \leq \gamma \\
\gamma|y-u|-\frac{\gamma^2}{2}&\text{ if }|u-y| > \gamma
\end{cases}\enspace,
\]
which satisfies assumption~\ref{assum:lip_conv} for $L = \gamma$. Let $t,v \in \bR^p$ such that $f(\cdot) = \inr{t,\cdot}$ and $g(\cdot) = \inr{v,\cdot}$. Since $\mu = \cN(0,\Sigma)$, we have
$\|f-g\|_{L_2}^2 = \bE \inr{t-v,X_1}^2 = (t-v)^T \Sigma (t-v)$ and $\lambda (f(X_1)- g(X_1))/ \|f-g\|_{L_2} = \big(\lambda/(t-v)^T \Sigma (t-v)\big) (t-v)^T X_1 \sim \cN(0, \lambda^2)$. If follows that $F-f^*$ is $1$-sub-Gaussian. \\

Let us turn to the computation of the complexity parameter $r^{SG}(A,\delta)$, for $A,\delta >0$. Well-known computations (see~\cite{talagrand2014upper}) give:
\begin{equation*}
w \big(F\cap  (f^*+ r\bB_2) \big) \leq  r \sqrt{\textnormal{Tr}(\Sigma)} \quad \mbox{and} \quad r_{\cI}^{SG}(A) =  cA  \gamma  \sqrt{\frac{\textnormal{Tr}(\Sigma)}{N} } \enspace,
\end{equation*}
for $c >0$ an absolute constant. \\

 To apply Theorem~\ref{thm:main}, it remains to study the local Bernstein assumption for the Huber loss function. We recall the following result from~\cite{ChiLecLer:2018}.
\begin{Proposition}[\cite{ChiLecLer:2018},Theorem 7] \label{prop:bernstein_huber_loss} Let $r>0$ and let $F_{Y|X=x}$ be the conditional cumulative function of $Y$ given $X=x$. Let us assume that the following holds.
	\begin{itemize}
		\item[a)] There exist  $\varepsilon,C' >0$ such that, for all $f$ in $F$, $\|f-f^*\|_{L_{2+\varepsilon}} \leq C' \|f-f^*\|_{L_2}$.
		\item[b)] Let $\varepsilon,C'$ be the constants defined in a). There exists $\alpha>0$ such that, for all $x\in \bR^p$ and all $z\in \mathbb{R}$ satisfying $ |z-f^*(x) | \leq (\sqrt{2} (C'))^{(2+\varepsilon)/\varepsilon} r$, $F_{Y|X=x}(z+\gamma) - F_{Y|X=x}(z- \gamma)\geqslant \alpha$.
	\end{itemize}
	Then, for all $f \in F \cap (f^* + r \bB_2) $,  $ (4/\alpha) P\cL^{\gamma}_f\geq \norm{f-f^*}_{L_2}^2$, where $P\cL^{\gamma}_f$ denotes the excess risk associated with the Huber loss with parameter $\gamma >0$. 
\end{Proposition}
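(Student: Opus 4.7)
The plan is to use the almost-everywhere second derivative of the Huber loss together with a first-order optimality condition and a tail-control argument. Set $R := (\sqrt{2}C')^{(2+\varepsilon)/\varepsilon}r$ and, for $f \in F \cap (f^* + r\bB_2)$, abbreviate $h := f - f^*$.

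First I would write the integral form of the second-order Taylor expansion. Since $u \mapsto \ell^{\gamma}(u,y)$ is convex and $C^1$ with $\ell^{\gamma}{}''(u,y) = \mathbf{1}_{|u-y| \leq \gamma}$ almost everywhere,
\begin{equation*}
\ell^{\gamma}(f(x),y) - \ell^{\gamma}(f^*(x),y) = \psi(f^*(x),y)\, h(x) + h(x)^2 \int_0^1 (1-t)\, \mathbf{1}_{|f^*(x)+t h(x)-y|\leq\gamma}\, dt,
\end{equation*}
where $\psi(\cdot,y) \in \partial \ell^{\gamma}(\cdot,y)$. Taking expectations and using the first-order optimality condition $\bE[\psi(f^*(X),Y) h(X)] \geq 0$, which follows from convexity of $F$ and the fact that $f^*$ minimises the risk, I obtain
\begin{equation*}
P\cL^{\gamma}_f \geq \bE\!\left[h(X)^2 \int_0^1 (1-t)\, \bP\bigl(|Y-f^*(X)-t h(X)| \leq \gamma \,\big|\, X\bigr)\, dt\right].
\end{equation*}

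Next I would split the expectation according to whether $|h(X)| \leq R$ (the ``quadratic zone'') or $|h(X)| > R$. On the quadratic zone, for every $t \in [0,1]$ one has $|f^*(X)+t h(X) - f^*(X)| \leq R$, so hypothesis b) yields $\bP(|Y-f^*(X)-t h(X)|\leq \gamma \,|\, X) \geq \alpha$; integrating the factor $(1-t)$ gives $\alpha/2$, so that
\begin{equation*}
\bE[h^2 \mathbf{1}_{|h|\leq R}] \leq (2/\alpha)\, P\cL^{\gamma}_f.
\end{equation*}

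The remaining step, which I expect to be the main obstacle, is to absorb the ``tail'' $\bE[h^2 \mathbf{1}_{|h|>R}]$ into $\|h\|_{L_2}^2$: outside the quadratic zone the Huber loss is linear, so no Bernstein-type control is available directly and one must use hypothesis a). By H\"older with conjugate exponents $(2+\varepsilon)/2$ and $(2+\varepsilon)/\varepsilon$, combined with Markov's inequality and hypothesis a),
\begin{equation*}
\bE[h^2 \mathbf{1}_{|h|>R}] \leq \|h\|_{L_{2+\varepsilon}}^2\, \bP(|h|>R)^{\varepsilon/(2+\varepsilon)} \leq (C')^2 \|h\|_{L_2}^2 \left(\frac{\|h\|_{L_2}^2}{R^2}\right)^{\!\varepsilon/(2+\varepsilon)}.
\end{equation*}
Since $\|h\|_{L_2} \leq r$, the value $R = (\sqrt{2}C')^{(2+\varepsilon)/\varepsilon} r$ is tuned precisely so that $(C')^2 (r/R)^{2\varepsilon/(2+\varepsilon)} = 1/2$, which gives $\bE[h^2 \mathbf{1}_{|h|>R}] \leq \|h\|_{L_2}^2/2$. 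Adding the two pieces yields $\|h\|_{L_2}^2 \leq (2/\alpha)P\cL^{\gamma}_f + \|h\|_{L_2}^2/2$, and rearranging produces the stated Bernstein bound $\|f-f^*\|_{L_2}^2 \leq (4/\alpha)P\cL^{\gamma}_f$.
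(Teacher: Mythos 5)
Your proof is correct, and it is essentially the argument behind the cited result: the paper itself does not reprove this proposition but imports it from \cite{ChiLecLer:2018} (Theorem 7), whose proof likewise combines the second-order expansion of the Huber loss (whose second derivative is the indicator $\mathbf{1}_{|u-y|\le\gamma}$), the first-order optimality of $f^*$ over the convex class, hypothesis b) on the event $\{|f-f^*|\le (\sqrt{2}C')^{(2+\varepsilon)/\varepsilon}r\}$, and a H\"older--Markov bound with hypothesis a) to absorb the tail into $\tfrac12\|f-f^*\|_{L_2}^2$, yielding the constant $4/\alpha$.
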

Since $\mu = \cN(0,\Sigma)$, the point a) holds with $C' =3$. Moreover, from the model~\eqref{linear_model_hub}, the point b) can be rewritten as:  $\forall x \in \bR^p, \forall z \in \bR: |z - \inr{x,t^*} | \leq 18r$,
\begin{equation*}
	\bP  \bigg(  z-\gamma \leq \inr{x,t^*} + \epsilon \leq z +\gamma  \bigg)  = F_{\epsilon} (z+\gamma - \inr{x,t^*}) - F_{\epsilon} (z- \gamma - \inr{x,t^*})\geq \alpha 
\end{equation*} 
which is satisfied if
\begin{equation} \label{condition_bernstein}
F_{\epsilon} (\gamma -18r) - F_{\epsilon} (18r - \gamma)\geq \alpha 
\end{equation}
where $F_{\epsilon}$ denotes the cumulative distribution of $\epsilon$ distributed as $\epsilon_i$, for any $i \in \cI$. 
Condition~\eqref{condition_bernstein} simply implies that the noise puts enough mass around zero. \\
We are now in position to apply Theorem~\ref{thm:main} for Huber's $ M$-estimator in $\bR^p$.
\begin{Theorem} \label{thm:huber}
	Let $\cI \cup \cO$ denote a partition of $\{1, \cdots, N \}$ such that $|\cI|\geq |\cO|$. Let $(X_i,Y_i)_{i= 1}^N$ be random variables valued in $\bR^p \times \bR$ such that $(X_i)_{i=1}^N$ are i.i.d random variable with $X_1 \sim \cN(0,\Sigma)$ and for all $i \in \{ 1,\cdots, N \}$
	\begin{equation*}
		Y_i = \inr{X_i, t^*} + \epsilon_i \enspace.
	\end{equation*}
	Let
	\[
	r(\alpha, \delta) = c\frac{\gamma}{\alpha} \bigg( \sqrt \frac{\textnormal{Tr}(\Sigma) \vee \log(1/\delta)}{N} \vee \frac{| \cO|}{N} \bigg)  \enspace.
	\]
	Let $(\epsilon_i)_{i \in I}$ be i.i.d symmetric random variables independent to $(X_i)_{i \in \cI}$ such that there exists $\alpha>0$ such that
	\begin{equation} \label{cond_bernstein_thm}
	F_{\epsilon} \bigg(\gamma -  18r(\alpha, \delta)   \bigg) - F_{\epsilon} \bigg( 18r(\alpha, \delta)   - \gamma \bigg) \geq \alpha 
	\end{equation}
	where $	F_{\epsilon} $ denotes the cdf of $\epsilon$ distributed as $\epsilon_i$ for $i$ in $\cI$. With probability larger than $1-\delta$ the estimator $\hat{t}_N^{\gamma}$ defined in Equation~\eqref{erm_huber} satisfies
	\begin{align*}
	&	\| \Sigma^{1/2}(\hat t^{\gamma}_N - t^*)  \|_{2} \leq r(\alpha, \delta) \quad \mbox{and} \quad   P\cL_{\hat t^{\gamma}_N} \leq   c \alpha r^2(\alpha,\delta) 
	\end{align*}
\end{Theorem}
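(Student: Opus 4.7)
The plan is to apply Theorem~\ref{thm:main}(1) with $A = 4/\alpha$, $L = \gamma$ and the sub-Gaussian complexity parameter $r^{SG}(A,\delta)$. Three verifications are required: the sub-Gaussianity of $F-f^*$, the explicit form of $r^{SG}(A,\delta)$, and the local Bernstein condition of Assumption~\ref{assum:fast_rates}.

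First, because $X \sim \cN(0,\Sigma)$ and every element of $F - F$ is a linear functional $\inr{t-v,\cdot}$, the random variable $(f-g)(X)$ is Gaussian with variance exactly $\|f-g\|_{L_2}^2$; hence $F - f^*$ is $1$-sub-Gaussian. The paper already records $w(F \cap (f^* + r\bB_2)) \leq r\sqrt{\textnormal{Tr}(\Sigma)}$ from the standard calculation for Gaussian ellipsoids. Plugging this into the definition of $r_{\cI}^{SG}(A)$ gives $r_{\cI}^{SG}(A) \lesssim AL\sqrt{\textnormal{Tr}(\Sigma)/|\cI|}$, and since $|\cO| \leq |\cI|$ forces $|\cI| \geq N/2$, combining with the two other terms in Equation~\eqref{eq:comp_par_sg} yields
\[
r^{SG}(A,\delta) \leq c\,\frac{\gamma}{\alpha}\bigg(\sqrt{\frac{\textnormal{Tr}(\Sigma) \vee \log(1/\delta)}{N}} \vee \frac{|\cO|}{N}\bigg) = r(\alpha,\delta),
\]
which matches the quantity in the statement.

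Second, to check Assumption~\ref{assum:fast_rates} with $A = 4/\alpha$ at $r = r(\alpha,\delta)$, I would invoke Proposition~\ref{prop:bernstein_huber_loss}. Hypothesis (a) holds with an absolute constant $C'$ since Gaussian linear functionals satisfy norm equivalence between $L_2$ and $L_{2+\varepsilon}$; for the pair $(\varepsilon, C')$ fixed by the paper one has $(\sqrt{2}C')^{(2+\varepsilon)/\varepsilon} = 18$. For hypothesis (b), the model $Y = \inr{X,t^*} + \epsilon$ with $\epsilon$ independent of $X$ gives $F_{Y\mid X=x}(z+\gamma) - F_{Y\mid X=x}(z-\gamma) = F_\epsilon(w+\gamma) - F_\epsilon(w-\gamma)$ with $w = z - \inr{x,t^*}$. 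By symmetry of $\epsilon$, when $|w| \leq 18r$ and $\gamma > 18r$ the interval $[w-\gamma,w+\gamma]$ contains the symmetric interval $[-(\gamma - 18r), \gamma - 18r]$, so the uniform lower bound reduces to $F_\epsilon(\gamma - 18r) - F_\epsilon(18r - \gamma) \geq \alpha$, which is precisely the hypothesis~\eqref{cond_bernstein_thm}. The proposition then delivers $\|f-f^*\|_{L_2}^2 \leq (4/\alpha) P\cL^{\gamma}_f$ for every $f$ on the $L_2$-sphere of radius $r(\alpha,\delta)$ around $f^*$.

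Third, Theorem~\ref{thm:main}(1) applied with these verifications yields, with probability at least $1-\delta$, $\|\hat{f}_N - f^*\|_{L_2} \leq r(\alpha,\delta)$ and $P\cL_{\hat{f}_N} \leq c\, r(\alpha,\delta)^2/A = c\,\alpha\, r(\alpha,\delta)^2$. Since $\|\hat{f}_N - f^*\|_{L_2}^2 = (\hat{t}^\gamma_N - t^*)^T \Sigma (\hat{t}^\gamma_N - t^*) = \|\Sigma^{1/2}(\hat{t}^\gamma_N - t^*)\|_2^2$, this is exactly the claimed bound. The one non-routine step is the geometric reduction of condition (b) of Proposition~\ref{prop:bernstein_huber_loss} to the one-dimensional mass condition~\eqref{cond_bernstein_thm} via the symmetry of $\epsilon$; everything else is bookkeeping on universal constants and on $|\cI| \asymp N$.
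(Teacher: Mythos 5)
Your proposal is correct and follows essentially the same route as the paper: Section~\ref{app:hub_reg} verifies $1$-sub-Gaussianity of $F-f^*$, computes $r_{\cI}^{SG}(A)\leq cA\gamma\sqrt{\textnormal{Tr}(\Sigma)/N}$ from the Gaussian mean-width, reduces point b) of Proposition~\ref{prop:bernstein_huber_loss} to condition~\eqref{cond_bernstein_thm} by the same interval-containment argument (which in fact needs no symmetry of $\epsilon$ at that step), and then applies Theorem~\ref{thm:main}(1) with $A=4/\alpha$, $L=\gamma$. The only cosmetic difference is your explicit use of $|\cI|\geq N/2$, which the paper absorbs into the constant.
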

Theorem~\ref{thm:huber} holds under no assumption on $|\cO|$ except $|\cO|\leq |\cI|$. There are two situations
\begin{enumerate}
	\item The number of outliers $|\cO|$ is smaller than $\sqrt{\textnormal{Tr}(\Sigma) N}$. We obtain the rate of convergence $\gamma \sqrt{\textnormal{Tr}(\Sigma)/N}$. When $\bE [\epsilon_i^2] = \sigma^2$, $i \in \cI$ and $\gamma = \sigma$, it corresponds to the minimax-optimal rate of convergence. 
	\item The number of outliers $|\cO|$ exceeds $\sqrt{\textnormal{Tr}(\Sigma)N}$. In this case, the error rate and the excess risk are deteriorated and the dependence is linear with respect to the proportion of outliers. 
\end{enumerate}
Let $\varepsilon =  |\cO|/N$. From~\cite{chen2018robust}, this rate is minimax-optimal in the $\varepsilon$-Huber contamination model and hence also minimax-optimal when $|\cO|$ outliers contaminate only the labels (see Theorem~\ref{minimax_rate}). In Section~\ref{sec:simu}, we run simple simulations to illustrate the linear dependence between the error rate and the proportion of outliers. \\

Theorem~\ref{thm:huber} handles many different distributions for the noise as long as Equation~\eqref{cond_bernstein_thm} is satisfied. We illustrate the fact that the local Bernstein condition is very weak with the following example. Let $\epsilon \sim C(1)$ be a standard Cauchy distribution. For all $t \in \bR$, $F_{\epsilon}(t) = 1/2 + \arctan(t)/\pi$. From easy computations, Equation~\eqref{condition_bernstein} can be rewritten as 
\begin{equation} \label{cauchy_cond}
\arctan( \gamma - 18r) \geq \pi \alpha/2 \enspace.
\end{equation} 
For 
\[
r(\alpha,\delta) =  \frac{\gamma}{\alpha} \bigg( \sqrt \frac{\textnormal{Tr}(\Sigma) \vee \log(1/\delta)}{N} \vee \frac{| \cO|}{N} \bigg) \enspace,
\]
Equation~\eqref{cond_bernstein_thm} becomes
\begin{equation*}
	\arctan \bigg( \gamma \bigg[ 1 - \frac{c}{\alpha} \bigg( \sqrt \frac{\textnormal{Tr}(\Sigma) \vee \log(1/\delta)}{N} \vee \frac{| \cO|}{N}\bigg) \bigg]  \bigg) \geq  \pi \alpha /2 \enspace,
\end{equation*}
which is satified for $\alpha = 1/4$ and $\gamma = 2 \tan(\pi/8) $  if 
\begin{equation*}
c \bigg( \sqrt \frac{\textnormal{Tr}(\Sigma) \vee \log(1/\delta)}{N} \vee \frac{| \cO|}{N} \bigg) \leq 1  \enspace.
\end{equation*}

\begin{Proposition}
	In same framework as in Theorem~\ref{thm:huber}, when $\epsilon_i \sim C(1)$, for $i \in \cI$, the local Bernstein condition is verified for $\alpha =1/4$ and $\gamma = 2\tan(\pi/8)$ if
	\[
	\bigg( \sqrt \frac{\textnormal{Tr}(\Sigma) \vee \log(1/\delta)}{N} \vee \frac{| \cO|}{N} \bigg) \leq 1
	\]
\end{Proposition}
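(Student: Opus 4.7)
The proof plan is a direct calculation: specialize the Bernstein condition~\eqref{cond_bernstein_thm} to the Cauchy CDF and check that the given pair $(\alpha,\gamma) = (1/4,\,2\tan(\pi/8))$ satisfies it whenever
\[
B := \sqrt{\frac{\textnormal{Tr}(\Sigma) \vee \log(1/\delta)}{N}} \vee \frac{|\cO|}{N}
\]
is at most a small universal constant, which the statement absorbs into the condition $B \leq 1$.

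First, I would exploit the symmetry of $\epsilon \sim C(1)$ together with the explicit formula $F_\epsilon(t) = 1/2 + \arctan(t)/\pi$ to simplify the left-hand side of~\eqref{cond_bernstein_thm}. Setting $s = \gamma - 18\, r(\alpha,\delta)$, symmetry yields
\[
F_\epsilon(s) - F_\epsilon(-s) = \frac{2}{\pi}\arctan(s),
\]
so condition~\eqref{cond_bernstein_thm} becomes $\arctan(\gamma - 18\, r(\alpha,\delta)) \geq \pi\alpha/2$, which, by monotonicity of $\arctan$, is equivalent to $\gamma - 18\, r(\alpha,\delta) \geq \tan(\pi\alpha/2)$. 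This implicitly also demands $\gamma > 18\, r(\alpha,\delta)$, which will be verified a posteriori from the final bound on $B$.

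Next, I would substitute the explicit form $r(\alpha,\delta) = c(\gamma/\alpha)B$ to rewrite the inequality as
\[
\gamma \Bigl(1 - \frac{18 c}{\alpha} B \Bigr) \geq \tan(\pi\alpha/2).
\]
For $\alpha = 1/4$ we have $\tan(\pi\alpha/2) = \tan(\pi/8)$, and with $\gamma = 2\tan(\pi/8)$ the right-hand side equals $\gamma/2$. The inequality therefore collapses to $(72 c)\, B \leq 1/2$, i.e., an absolute-constant upper bound on $B$. Renaming this constant inside the hypothesis $B \leq 1$ finishes the proof.

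The argument is an elementary computation with no substantive obstacle; the only care needed is to keep the absolute constants consistent with those hidden in the definition of $r(\alpha,\delta)$, and to observe that the condition $B \leq 1$ also comfortably ensures positivity of $\gamma - 18\, r(\alpha,\delta)$, justifying the earlier use of monotonicity.
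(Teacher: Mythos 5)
Your proposal is correct and follows essentially the same route as the paper: rewrite the local Bernstein condition~\eqref{condition_bernstein} using the Cauchy cdf $F_{\epsilon}(t)=1/2+\arctan(t)/\pi$ and symmetry to get $\arctan(\gamma-18r)\geq \pi\alpha/2$, substitute $r(\alpha,\delta)=c(\gamma/\alpha)B$, and check that $\alpha=1/4$, $\gamma=2\tan(\pi/8)$ reduce it to an absolute-constant bound on $B$ absorbed into the stated condition. Your extra remark on positivity of $\gamma-18r$ (justifying monotonicity) is a minor tidying of the same computation, not a different argument.
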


\begin{Remark}
The local Bernstein condition also holds many other noise distributions. In the case of Gaussian noise we can take $\alpha$ as a constant and $\gamma = \sigma$. In this case we recover the rate $\sigma \sqrt{ \textnormal{Tr}(\Sigma)/N}$.
\end{Remark}


\section{High dimensional setting} \label{sec:high_dim}

In Section~\ref{sec:no_reg}, we studied non-regularized procedures. If the class of predictors $F$ is too small there is no hope to approximate $Y$ with $f^*(X)$. It is thus necessary to consider large classes of functions leading to a large error rate unless some extra low-dimensional structure is expected on $f^*$. Adding a regularization term to the empirical loss is a wide-spread method to induce this low-dimensional structure. More formally, let $F \subset E \subset L_2$ and $\|\cdot\| \mapsto \bR^+$ be a norm defined on the linear space $E$. For any $\lambda > 0$, the regularized empirical risk minimizer (RERM) is defined as
\begin{equation} \label{def:RERM}
\hat{f}_N^{\lambda} = \argmin_{f \in F} \frac{1}{N} \sum_{i=1}^{N} \ell(f(X_i),Y_i) + \lambda \|f\|
\end{equation}
For example, the use of the $\ell_1$-norm promotes sparsity~\cite{tibshirani1996regression} for regression and classification problems in $\bR^p$, while the 1-Schatten norm promotes low rank solutions for matrix reconstruction. 
The main result of this section has the same flavor as the one in Section~\ref{sec:no_reg}. The error rate can be bounded by
\begin{equation*}
	r_N + AL \frac{|\cO|}{N} \enspace.
\end{equation*}
where $r_N$ denotes the (sparse or low-dimensional) error rate in a non contaminated setting, $L$ is the Lipschitz constant from Assumption~\ref{assum:lip_conv} and $A$ is a parameter coming from the local Bernstein condition. When  $| \cO| \leq r_N N/(AL)$, the RERM behaves as if there was no contamination.\\

\subsection{Complexity parameters and sparsity equation}

To analyze regularized procedures, we first need to redefine the complexity parameter.
\begin{Definition}\label{def:comp_reg}
	Let $\bB$ be the unit ball induced by the regularization norm $\|\cdot\|$. For any $A,\rho >0$, let $\tilde r_{\cI}^{SG}(A,\rho)$ and $\tilde r_{\cI}^{B}(A,\rho)$ be defined as 
	\begin{equation*}
		\tilde r_{\cI}^{SG}(A,\rho) = \inf \{ r>0 :  cALw \big(F\cap  (f^*+ r\bB_2 \cap \rho \bB) \big) \leq  \sqrt{|\cI|}r^2  \} \enspace,
	\end{equation*}
	and,
	\begin{equation*}
	\tilde r_{\cI}^{B}(A,\rho) = \inf \{ r>0 :  cAL\textnormal{Rad}_{\cI} \big(F\cap  (f^*+ r\bB_2 \cap \rho \bB) \big) \leq | \cI | r^2  \} \enspace,
	\end{equation*}
	where $c>0$ denotes an absolute constant and $L$ is the Lipschitz constant from assumption~\ref{assum:lip_conv}. For any $A,\delta,\rho >0$ let $\tilde r^{SG}(A,\rho,\delta)$ and $\tilde r^{B}(A,\rho,\delta)$ be such that 
	\begin{equation} \label{comp_par_regu_upper}
			\tilde r^{SG}(A,\rho,\delta) \geq \tilde{r}_{\cI}^{SG}(A,\rho) \vee AL \sqrt \frac{\log(1/\delta)}{N} \vee AL \frac{|\cO|}{N} \enspace,
	\end{equation}
	and,
	\begin{equation} \label{comp_par_regu_upper_bounded}
	\tilde r^B(A,\rho,\delta) \geq \tilde{r}_{\cI}^B(A,\rho) \vee AL \sqrt \frac{\log(1/\delta)}{N} \vee AL \frac{|\cO|}{N} \enspace.
	\end{equation}
\end{Definition}
The main difference between the complexity parameters from Definition~\ref{def:comp_reg}  and the ones from Definition~\ref{def:comp} is the localization $ \rho \bB $. Parameters in Definition~\ref{def:comp_reg} measure the local complexity of $F$ around $f^*$, where the localization is defined with respect to the metric induced by the regularization norm. \\

To deal with the regularization part, we use the tools from~\cite{LM_sparsity}. The idea is the following: the $\ell_1$ norm induces sparsity properties because it has large subdifferentials at sparse vectors. Therefore, to obtain “sparsity dependent bounds", i.e bounds depending on the unknown sparsity of the \textit{oracle} $f^*$, it seems natural to look at the size of the subdifferential of $\|\cdot\|$ in $f^*$. We recall that the subdifferential of $\|\cdot\|$ in $f$ is defined as
\begin{equation*} 
	(\partial \|.\|)_f = \{ z^{*} \in E^{*}  \enspace : \enspace \|f+h\| - \|f\| \geq z^{*}(h) \enspace \text{for every  } h \in E  \}\enspace,
\end{equation*}
where $E^{*}$ is the dual space of the normed space $(E,\|\cdot\|)$. It can be also written as
\begin{equation}\label{eq:sub_diff_norm}
(\partial \norm{\cdot})_f=\left\{
\begin{array}{cc}
\{z^*\in \bS^*:z^*(f) =\norm{f}\} & \mbox{ if } f\neq0\\
\bB^* & \mbox{ if } f=0
\end{array}
\right.
\end{equation}where $\bB^*$ and $\bS^*$ denote respectively the unit ball and the unit sphere with respect to the dual norm $\norm{\cdot}^*$ defined as $z ^*\in E^* \to\norm{z^*}^* = \sup_{\norm{f}\leq 1} z^*(f)$. When $f\neq0$, the subdifferential of $\norm{\cdot}$ in $f$ is the set of all vectors $z^*$ in the unit dual sphere $\bS^*$ which are norming for $f$. For any $\rho >0$, let
\begin{align*}
	\Gamma_{f^{*}}(\rho) = \bigcup_{f \in F \cap (f^* + (\rho/20) \bB) } (\partial \|\cdot\|)_f\enspace.
\end{align*}
Instead of looking at the subdifferential of $\|\cdot\|$ exactly in $f^*$ we consider the collection of subdifferentials for functions $f \in F$ “close enough" to the \textit{oracle} $f^*$. It enables to handle \textit{oracles} $f^*$ that are not exactly sparse but approximatively sparse. The main technical tool to analyze regularization procedures is the following sparsity equation~\cite{LM_sparsity}.
\begin{Definition}\label{def:SE} 
	For any $A,\rho,\delta >0$, let $\tilde r(A,\rho,\delta) \in \{ \tilde r^{SG}(A,\rho,\delta),\tilde r^B(A,\rho,\delta)   \}  $. Define
	\begin{gather} 
		\notag {H}_{A,\rho,\delta} =  F \cap \big(f^* + \rho \bB \cap \tilde r (A,\rho, \delta) \bB_2 \big)   \enspace,
	\end{gather}
	and
	\begin{equation} \label{sparisty:eq}
	\Delta(A,\rho, \delta) = \inf_{h \in H_{A,\rho, \delta} } \sup_{z^{*}  \in \Gamma_{f^{*}}(\rho) }   z^{*}(h-f^{*})\enspace.
	\end{equation}
	A real number $\rho>0$ satisfies the $A,\delta$-\textbf{sparsity equation} if $\Delta(A,\rho,\delta) \geq 4\rho /5$.
\end{Definition}
The constant $4/5$ in Definition~\ref{def:SE} could be replaced by any constant  in $(0,1)$. The sparsity equation is a very general and powerful tool allowing to derive “sparsity dependent bounds" when taking $\rho^*$ function of the unknown sparsity (see Section~\ref{app:huber_regu} for a more explicit example or~\cite{ChiLecLer:2019,LM_sparsity} for many other illustrations).
\begin{Remark} \label{remark_comp_bound}
	It is also possible to obtain “norm dependent bounds", i.e bounds depending on the norm of the \textit{oracle} $\|f^*\|$. By taking $\rho^* = 20 \|f^*\| $,  we get that  $ 0 \in  F \cap (f^* + (\rho^*/20) \bB)  $ and from Equation~\eqref{eq:sub_diff_norm} it follows that $\Gamma_{f^{*}}(20\|f^*\|) = \bB^*$ and for any $A,\delta >0$, $\Delta(A, \rho^* ,\delta) = \rho^*$. In other words, the sparsity equation is always satisfied for $\rho^* = 20 \|f^*\|$ (see  Section~\ref{app:rkhs} for an example)
\end{Remark}

\subsection{Local Bernstein conditions and main results}

In this section, we adapt the local Bernstein assumption to regularized framework. 

\begin{Assumption}\label{assum:fast_rates_reg} Let $\delta >0$ and $\tilde r(\cdot,\cdot,\delta) \in \{ \tilde r^{SG} (\cdot,\cdot,\delta),\tilde r^B(\cdot,\cdot,\delta)  \}$. Suppose there exist $A > 0$ and $\rho^*$ satisfying the $A,\delta$-sparsity equation from Definition~\ref{def:SE} such that for all $f \in F \cap \big( f^* + \rho \bB \cap \tilde r(A,\rho^*,\delta)  \bS_2 \big)$ we have $\|f-f^{*}\|_{L_2}^2\leq AP\mathcal{L}_f $.
\end{Assumption}
We are now in position to state the main theorem of this section.
\begin{Theorem} \label{thm:reg}
	Let $\cI  \cup \cO$ denote a partition of $\{1,\cdots, N\}$ such that $|\cO| \leq |\cI|$.  Grant Assumptions~\ref{assum:distri},~\ref{assum:convex},~\ref{assum:lip_conv}. Let $\delta >0$.
	\begin{enumerate}
			\item Let $\tilde r(\cdot,\cdot, \delta) = \tilde r^{SG}(\cdot,\cdot, \delta)$. \\
			Assume that the class $F-f^*$ is $1$-sub-Gaussian and that assumption~\ref{assum:fast_rates_reg} holds with $A, \rho^* >0$. Set:
			\begin{align*}
			\lambda = c \frac{(\tilde r^{SG} (A,\rho^*,\delta))^2}{A \rho^*} \enspace.
			\end{align*}	
			With probability larger than $1-\delta$, the estimator $\hat{f}_N^{\lambda}$ defined in Equation~\eqref{def:RERM} satisfies
			\begin{align*}
			& \|\hat{f}_N^{\lambda} - f^*\|_{{L_2}} \leq \tilde{r}^{SG}(A,\rho^*,\delta)  \quad , \quad 	\|\hat{f}_N^{\lambda} - f^*\| \leq \rho^* \quad \mbox{and}  \quad P\cL_{\hat{f}_N^{\lambda }} \leq   c  \frac{(\tilde r^{SG}(A,\rho^*,\delta))^2}{A} \enspace.
			\end{align*}
			\item  Let $\tilde r(\cdot,\cdot, \delta) = \tilde r^{B}(\cdot,\cdot, \delta)$.\\
			Let us assume that Assumption~\ref{assum:fast_rates_reg} holds with $A,\rho^* >0$ and that 
		\begin{equation} \label{cond_locally_bounded_reg}
		\forall f \in F \cap \big( f^* + \tilde r^B(A,\rho^*,\delta) \bS_2 \cap  \rho^* \bB   \big) \textnormal{ and } x \in \cX \quad |f(x)-f^*(x)| \leq 1	
		\end{equation}
		Set:
		\begin{align*}
		\lambda = c \frac{(\tilde r^{B} (A,\rho^*,\delta))^2}{A \rho^*} \enspace.
		\end{align*}	
		With probability larger than $1-\delta$, the estimator $\hat{f}_N^{\lambda}$ defined in Equation~\eqref{def:RERM} satisfies
		\begin{align*}
		& \|\hat{f}_N^{\lambda} - f^*\|_{{L_2}} \leq \tilde{r}^{B}(A,\rho^*,\delta)  \quad , \quad 	\|\hat{f}_N^{\lambda} - f^*\| \leq \rho^* \quad \mbox{and}  \quad P\cL_{\hat{f}_N^{\lambda }} \leq   c  \frac{(\tilde r^{B}(A,\rho^*,\delta))^2}{A} \enspace.
		\end{align*}
	\end{enumerate}
\end{Theorem}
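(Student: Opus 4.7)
The plan is to follow the convexity/peeling strategy of \cite{LM_sparsity, ChiLecLer:2018} adapted to the contaminated setting, treating both the sub-Gaussian and bounded cases in parallel since only the complexity measure controlling the empirical multiplier process changes. Set $\Phi_N(f) := P_N\cL_f + \lambda(\|f\| - \|f^*\|)$, which by Assumptions~\ref{assum:convex} and~\ref{assum:lip_conv} is convex in $f$ and satisfies $\Phi_N(\hat f_N^\lambda) \leq \Phi_N(f^*)=0$. Introduce the localized set
\[
C := F \cap (f^* + \rho^* \bB) \cap (f^* + \tilde r(A,\rho^*,\delta)\bB_2).
\]
By convexity, if either conclusion $\|\hat f_N^\lambda-f^*\| \leq \rho^*$ or $\|\hat f_N^\lambda-f^*\|_{L_2} \leq \tilde r(A,\rho^*,\delta)$ fails, there exists $\tilde f = f^* + \alpha(\hat f_N^\lambda - f^*)$ with $\alpha \in (0,1]$ lying on the boundary of $C$ and still satisfying $\Phi_N(\tilde f)\leq 0$. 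The proof reduces to exhibiting an event of probability $\geq 1-\delta$ on which $\Phi_N(f) > 0$ uniformly over the boundary of $C$.

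Next, decompose $\Phi_N(f) = P\cL_f + (P_N-P)\cL_f + \lambda(\|f\| - \|f^*\|)$. The local Bernstein Assumption~\ref{assum:fast_rates_reg} yields $P\cL_f \geq \|f-f^*\|_{L_2}^2/A$ on $C$. Split the empirical fluctuation as
\[
(P_N-P)\cL_f = \frac{1}{N}\sum_{i\in\cI}(\cL_f(X_i,Y_i) - P\cL_f) - \frac{|\cO|}{N}P\cL_f + \frac{1}{N}\sum_{i\in\cO}\cL_f(X_i,Y_i).
\]
The informative multiplier process is bounded uniformly on $C$ by standard symmetrization, the Lipschitz contraction principle, and Talagrand's concentration inequality, giving an upper bound of order $L\cdot\mathrm{Complexity}(C)/|\cI|$, which by Definition~\ref{def:comp_reg} is at most $\tilde r(A,\rho^*,\delta)^2/A$ up to a sub-Gaussian deviation $AL\sqrt{\log(1/\delta)/N}\,\|f-f^*\|_{L_2}$, controlled by the second term in~\eqref{comp_par_regu_upper}--\eqref{comp_par_regu_upper_bounded}. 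The outlier contribution is dealt with via $|\cL_f(X_i,Y_i)|\leq L|f(X_i)-f^*(X_i)|$ (Assumption~\ref{assum:lip_conv}) and the fact that \emph{all} $X_i$ are i.i.d.\ under Assumption~\ref{assum:distri}; Cauchy--Schwarz combined with a one-sided quadratic concentration bound (respectively, the $L_\infty$ hypothesis~\eqref{cond_locally_bounded_reg} in the bounded case) gives
\[
\frac{1}{N}\sum_{i\in\cO}|f(X_i)-f^*(X_i)| \leq c\sqrt{|\cO|/N}\;\|f-f^*\|_{L_2},
\]
which AM--GM splits into $\|f-f^*\|_{L_2}^2/(2A)$ plus $cAL^2|\cO|/N$, absorbed by the third term in~\eqref{comp_par_regu_upper}--\eqref{comp_par_regu_upper_bounded}.

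The regularization piece is handled via the sparsity equation. On the $L_2$-sphere $\|f-f^*\|_{L_2}=\tilde r(A,\rho^*,\delta)$ with $\|f-f^*\|\leq\rho^*$, the trivial bound $\lambda(\|f\|-\|f^*\|) \geq -\lambda\rho^*$ suffices because the choice $\lambda = c\tilde r^2/(A\rho^*)$ makes $\lambda\rho^* \leq \tilde r^2/(cA)$, absorbed into the Bernstein lower bound $\tilde r^2/A$. On the norm sphere $\|f-f^*\| = \rho^*$ with $\|f-f^*\|_{L_2}\leq \tilde r(A,\rho^*,\delta)$, pick any $z^*\in\Gamma_{f^*}(\rho^*)$ norming for some $g \in F$ with $\|g-f^*\|\leq \rho^*/20$; then by~\eqref{eq:sub_diff_norm}, $\|f\|-\|f^*\| \geq z^*(f-f^*) - 2\|g-f^*\| \geq z^*(f-f^*) - \rho^*/10$. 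Taking the supremum over $z^*$ and invoking $\Delta(A,\rho^*,\delta)\geq 4\rho^*/5$ produces a positive linear contribution of order $\lambda\rho^*$ that dominates the empirical fluctuations.

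The main obstacle is the joint tuning of $\lambda$: it must be large enough for the sparsity-equation contribution to beat the empirical fluctuations on the norm sphere, yet small enough that on the $L_2$ sphere $\lambda\rho^*$ is absorbed into $\tilde r^2/A$; the choice $\lambda = c\tilde r(A,\rho^*,\delta)^2/(A\rho^*)$ balances exactly these two constraints. A secondary technical point is controlling the outlier term \emph{uniformly} in $f\in C$ without knowing $\cO$, which is what forces the Cauchy--Schwarz/quadratic-process detour above. Once $\Phi_N(f)>0$ holds uniformly on the boundary of $C$, convexity forces $\hat f_N^\lambda\in C$, giving the bounds on $\|\hat f_N^\lambda-f^*\|_{L_2}$ and $\|\hat f_N^\lambda-f^*\|$. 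The excess risk estimate $P\cL_{\hat f_N^\lambda}\leq c\tilde r(A,\rho^*,\delta)^2/A$ then follows from the local Bernstein inequality applied at $\hat f_N^\lambda\in C$, combined with the empirical deviation bound already established on $C$.
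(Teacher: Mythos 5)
Your overall architecture (convexity reduction to the boundary of $C=F\cap(f^*+\rho^*\bB\cap\tilde r\bB_2)$, Bernstein on the $L_2$-sphere, sparsity equation on the norm sphere, and the two-sided constraint pinning $\lambda \asymp \tilde r^2/(A\rho^*)$) matches the paper's proof. But there is a genuine gap in your treatment of the outlier term. You bound
\[
\frac{1}{N}\sum_{i\in\cO}\big|f(X_i)-f^*(X_i)\big| \;\leq\; c\sqrt{\tfrac{|\cO|}{N}}\,\|f-f^*\|_{L_2}
\]
via Cauchy--Schwarz against a quadratic process, and then split by AM--GM into $\|f-f^*\|_{L_2}^2/(2A)+cAL^2|\cO|/N$, claiming the second piece is absorbed by the third term of~\eqref{comp_par_regu_upper}--\eqref{comp_par_regu_upper_bounded}. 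It is not: that term only guarantees $\tilde r\geq AL|\cO|/N$, whereas absorbing $AL^2|\cO|/N$ into $\tilde r^2/A$ requires $\tilde r\gtrsim AL\sqrt{|\cO|/N}$, which is strictly stronger whenever $|\cO|\leq N$. As written, your argument only yields the $L\sqrt{|\cO|/N}$ dependence --- exactly the MOM-type rate the paper singles out as suboptimal --- and therefore does not prove the stated theorem, whose whole point is the linear dependence $AL|\cO|/N$.

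The repair is the paper's route: since only the labels of the outliers are corrupted, $(X_i)_{i\in\cO}$ are still i.i.d.\ with law $\mu$, so one controls the \emph{first-moment} empirical process directly, on an event $\Omega_{\cO}$ of the form
\[
\sup_{f\in \cF_{r,\rho}}\Big|\big(P-P_{\cO}\big)\,|f-f^*|\Big| \;\leq\; \frac{c}{\sqrt{|\cO|}}\Big(w(\cF_{r,\rho})+\tilde r\sqrt{\log(1/\delta)}\Big)
\]
(or the Rademacher analogue in the bounded case). Then
\[
L\frac{|\cO|}{N}P_{\cO}|f-f^*| \;\leq\; L\frac{|\cO|}{N}\|f-f^*\|_{L_1}+\frac{cL\sqrt{|\cO|}}{N}\Big(w(\cF_{r,\rho})+\tilde r\sqrt{\log(1/\delta)}\Big)
\;\leq\; L\frac{|\cO|}{N}\tilde r+c\frac{\tilde r^2}{A},
\]
using $\|f-f^*\|_{L_1}\leq\|f-f^*\|_{L_2}\leq\tilde r$, $|\cO|\leq|\cI|$ and the definition of $\tilde r$, which is $\lesssim \tilde r^2/A$ precisely because $\tilde r\geq AL|\cO|/N$. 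Replacing your Cauchy--Schwarz/quadratic step by this first-moment control restores the claimed rate; the rest of your argument (including the handling of $\lambda$ on the two parts of the boundary and the final excess-risk bound) then goes through as in the paper. A secondary, non-fatal remark: Assumption~\ref{assum:fast_rates_reg} gives the Bernstein inequality only on the $L_2$-\emph{sphere} intersected with the norm ball, not on all of $C$; this is all you need, since on the norm-sphere part of the boundary one only uses $P\cL_{f}\geq 0$ together with the sparsity equation.
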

When the equality holds in Equations~\eqref{comp_par_regu_upper} or~\eqref{comp_par_regu_upper_bounded} we have
\begin{equation*}
	\|\hat{f}_N^{\lambda} - f^*\|_{{L_2}} \leq   \tilde{r}_{\cI}(A,\rho) \vee AL \sqrt \frac{\log(1/\delta)}{N} \vee AL \frac{|\cO|}{N}   \enspace.
\end{equation*}	
with probability larger than $1-\delta$.
The error rate can be decomposed as the error rate in the non-contaminated setting and the proportion of outliers $AL|\cO|/N$. \\
Equation~\eqref{cond_locally_bounded_reg} means that the class $F-f^*$ is locally bounded. As we will see in Section~\ref{app:rkhs}, requiring the local boundedness instead of the global one has important consequences.\\
Theorem~\ref{thm:reg} is a “meta" theorem in the sense that it can used for many practical problems. We use Theorem~\ref{thm:reg} for $\ell_1$-penalized Huber's M-estimator in Section~\ref{app:huber_regu}. It is also possible to use  Theorem~\ref{thm:reg} for many other convex and Lipschitz loss functions and regularization norms as it is done in~\cite{ChiLecLer:2019}. It can also be used for matrix reconstruction problems by penalizing with the 1-Schatten norm~\cite{LM_sparsity}. \\

Theorem~\ref{thm:reg} may seem complicated at a first glance because the parameter $A$ appears in the definition of the complexity parameters, in the sparsity equation and in Assumption~\ref{assum:fast_rates_reg}. However, there is a simple general routine that we may use to apply Theorem~\ref{thm:reg}.

\paragraph{General routine to apply Theorem~\ref{thm:reg} when the class $F-f^*$ is sub-Gaussian}
\begin{enumerate}
	\item Verify that the class $F-f^*$ is sub-Gaussian and take $\tilde r(\cdot,\cdot,\cdot) = \tilde r^{SG}(\cdot,\cdot,\cdot)$.
	\item Verify assumptions~\ref{assum:distri},~\ref{assum:convex} and~\ref{assum:lip_conv}.
	\item Compute the localized Gaussian mean width $w \big(F  \cap (f^* + r\bB_2 \cap \rho \bB ) \big)$ for any $r,\rho >0$. Deduce the value of $\tilde{r}_{\cI}^{SG}(A,\rho)$ for any $A,\rho > 0$.
	\item From the computation of $\tilde{r}_{\cI}^{SG}(A,\rho)$ deduce the closed form of $\tilde r^{SG}(A,\rho,\delta)$. 
	\item For fixed constants $A, \delta > 0$, find $\rho^* >0$ satisfying the $A,\delta$- sparsity equation.
	\item From the value of $\rho^*$, compute $\tilde{r}^{SG}(A,\rho^*,\delta)$ for any $A,\delta >0$.
	\item Find a constant $ A >0$ verifying Assumption~\ref{assum:fast_rates_reg}. 
\end{enumerate}

\paragraph{General routine to apply Theorem~\ref{thm:reg} when the class $F-f^*$ is locally bounded}
\begin{enumerate}
	\item Take $\tilde r(\cdot,\cdot,\cdot) = \tilde r^{B}(\cdot,\cdot,\cdot)$.
	\item Verify assumptions~\ref{assum:distri},~\ref{assum:convex} and~\ref{assum:lip_conv}.
	\item Compute the localized Rademacher complexity $\textnormal{Rad}_{\cI} \big(F  \cap (f^* + r\bB_2 \cap \rho \bB ) \big)$ for any $r,\rho >0$. Deduce the value of $\tilde{r}_{\cI}^{B}(A,\rho)$ for any $A,\rho > 0$.
	\item From the computation of $\tilde{r}_{\cI}^{B}(A,\rho)$ deduce the closed form of $\tilde r^{G}(A,\rho,\delta)$. 
	\item For fixed constants $A, \delta > 0$, find $\rho^* >0$ satisfying the $A,\delta$- sparsity equation.
	\item From the value of $\rho^*$, compute $\tilde{r}^B(A,\rho^*,\delta)$ for any $A,\delta >0$.
	\item Find a constant $ A >0$ verifying Assumption~\ref{assum:fast_rates_reg}. 
	\item Verify that the class $F-f^*$ is locally bounded with $\tilde{r}^B(A,\rho^*,\delta)$ computed previously. 
\end{enumerate}
We will apply these two general routines for practical examples in Section~\ref{app:huber_regu} and~\ref{app:rkhs}.

\subsection{Application to $\ell_1$-penalized Huber's M-estimator with Gaussian design} \label{app:huber_regu}

Let $F = \{ \inr{t,\cdot}, t \in \bR^p  \}$ denote the class of linear functionals in $\bR^p$. Let $(X_i,Y_i)_{i =1}^N$ be random variables defined by, $Y_i = \inr{X_i, t^*} + \epsilon_i $, where $(X_i)_{i=1}^N$ are i.i.d centered standard Gaussian vectors. The random variables $(\epsilon_i)_{ i \in \cI}$ are symmetric independent to $(X_i)_{i \in \cI}$. The \textit{oracle} $t^*$ is assumed to be $s$-sparse, $\|t^*\|_0 := \sum_{i=1}^p   \mathbb I \{ t^*_i \neq 0  \} \leq s$. \\
The $\ell_1$-penalized Huber's M-estimator is defined as 
\begin{equation} \label{rerm_huber}
\hat t_N^{\gamma,\lambda} =  \argmin_{t \in \bR^p}  \frac{1}{N} \sum_{i=1}^N\ell^{\gamma}(\inr{X_i,t},Y_i) + \lambda  \|t\|_1
\end{equation}
where $\ell^{\gamma}(\cdot,\cdot)$ is the Huber loss function. We use the routine of Theorem~\ref{thm:reg} when $F-f^*$ is sub-Gaussian:\\
\textbf{Step 1: } As in Section~\ref{app:hub_reg}, the class $F-f^*$ is $1$-sub-Gaussian.\\
\textbf{Step 2 } It is clear that Assumptions~\ref{assum:distri},~\ref{assum:convex},~\ref{assum:lip_conv} with $L = \gamma$ are satisfied. \\
\textbf{Step 3 and 4: }Let us turn to the computation of the local Gaussian-mean width. Since $X \sim \cN(0,I_p)$, for every $t\in \bR^p$, we have $w \big(F  \cap (f^* + r\bB_{2} \cap \rho \bB ) \big) = w(r \bB_2^p \cap \rho \bB_1^p )$ for every $r,\rho >0$, where $\bB_q^p$ denotes the $\ell_q$ ball in $\bR^p$ for $q >0$. Well-known computations give (see~\cite{vershynin2018high} for example)
\begin{equation*}
	w (\rho B_1^p \cap r B_2^p)  \leq \rho w ( B_1^p)\leq c \rho \sqrt{\log(p)} \enspace, 
\end{equation*} 
and consequently,
\begin{equation*} 
	\big(\tilde r_{\cI}^{SG}(A, \rho) \big)^2 = c A \gamma  \rho \sqrt{\frac{\log(p)}{N}} \enspace,
\end{equation*}
and let $\tilde r^{SG}(A,\rho,\delta)$ be such that
\[
 \tilde r^{SG}(A,\rho,\delta)  \geq c \bigg(  \sqrt{A\gamma \rho }  \bigg(\frac{\log(p)}{N} \bigg)^{1/4} \vee A\gamma \sqrt \frac{\log(1/\delta)}{N} \vee A \gamma  \frac{| \cO |}{N}\bigg)
\]
\textbf{Step 5 and 6:} To verify the $A,\delta$-sparsity equation from Definition~\ref{def:SE} for the $\ell_1$ norm we use the following result from \cite{LM_sparsity}.
\begin{Lemma}\cite[Lemma 4.2]{LM_sparsity} \label{lemma_lasso}.
	Let $\bB_1^p$ denote the unit ball induced by $\|\cdot\|_1$. Let us assume that the design $X$ is isotropic. If the \textit{oracle} $t^{*}$ is $s$-sparse and $100s \leq \big(\rho / \big(   \tilde r^{SG}(A,\rho,\delta)  \big)^2$ then $\Delta(A,\rho,\delta) \geq (4/5)\rho$.
\end{Lemma}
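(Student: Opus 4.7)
The plan is to reduce the sparsity equation to a finite-dimensional statement on $\bR^p$ via isotropy, and then carry out the classical Lasso sign-trick to build an explicit subgradient element with large inner product against $h - f^*$. Because $X$ is isotropic (here $X \sim \cN(0,I_p)$), every $f(\cdot) = \inr{t,\cdot} \in F$ satisfies $\|f-f^*\|_{L_2}^2 = \|t-t^*\|_2^2$, so $H_{A,\rho,\delta}$ is identified with the set of $t \in \bR^p$ such that $\|t-t^*\|_1 = \rho$ (taking the regularization localisation on the sphere, as is standard in LM--sparsity) and $\|t-t^*\|_2 \leq \tilde r^{SG}(A,\rho,\delta) =: r^*$. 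Since $t^*$ itself lies trivially in $F \cap (f^* + (\rho/20)\bB)$, the inclusion $\partial \|\cdot\|_1 \text{ at } t^* \subset \Gamma_{f^*}(\rho)$ gives a rich supply of candidate dual vectors.

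The next step is the standard construction. Let $S = \textrm{supp}(t^*)$, so $|S| \leq s$. Fix an arbitrary $h \in H_{A,\rho,\delta}$ with parameter $t \in \bR^p$, and define $z^* \in \bR^p$ coordinate-wise by $z^*_i = \textrm{sign}(t^*_i)$ for $i \in S$ and $z^*_i = \textrm{sign}(t_i - t^*_i)$ for $i \in S^c$ (with $z^*_i = 0$ if $t_i = t^*_i$). Then $|z^*_i| = 1$ on $S$ and $|z^*_i| \leq 1$ on $S^c$, so $z^* \in \partial \|t^*\|_1 \subset \Gamma_{f^*}(\rho)$, and by construction
\[
z^*(h - f^*) = \sum_{i \in S} \textrm{sign}(t^*_i)(t_i - t^*_i) + \sum_{i \in S^c} |t_i - t^*_i|.
\]

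The lower bound now follows from Cauchy--Schwarz and the triangle-type decomposition of $\|t - t^*\|_1$. On the support, $|\sum_{i \in S}\textrm{sign}(t^*_i)(t_i - t^*_i)| \leq \sqrt{s}\,\|(t-t^*)_S\|_2 \leq \sqrt{s}\,\|t-t^*\|_2$. On the complement, $\|(t-t^*)_{S^c}\|_1 = \|t-t^*\|_1 - \|(t-t^*)_S\|_1 \geq \|t-t^*\|_1 - \sqrt{s}\,\|t-t^*\|_2$. Combining,
\[
z^*(h - f^*) \geq \|t - t^*\|_1 - 2\sqrt{s}\,\|t - t^*\|_2 \geq \rho - 2\sqrt{s}\, r^*.
\]
The hypothesis $100 s \leq (\rho / r^*)^2$ rewrites as $2\sqrt{s}\, r^* \leq \rho/5$, so $z^*(h-f^*) \geq (4/5)\rho$. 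Taking supremum over $z^* \in \Gamma_{f^*}(\rho)$ and infimum over $h \in H_{A,\rho,\delta}$ yields $\Delta(A,\rho,\delta) \geq 4\rho/5$.

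I expect no serious obstacle; the only real care point is the coordinate-wise sign choice on $S^c$ (so that the inner product equals $|t_i - t^*_i|$ while keeping $|z^*_i| \leq 1$), and the convention that the regularization localisation is on the $\ell_1$-sphere of radius $\rho$ rather than the closed ball, without which the infimum in $\Delta$ is trivially attained at $h = f^*$.
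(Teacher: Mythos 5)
Your proof is correct and follows essentially the same route as the paper: the lemma itself is quoted from \cite[Lemma 4.2]{LM_sparsity}, but the paper's own proof of the non-isotropic generalization (Lemma~\ref{lemma_lasso_RE}) uses exactly your sign-vector construction ($z^*_i=\mathrm{sign}(t^*_i)$ on the support, $z^*_i=\mathrm{sign}(t_i-t^*_i)$ off it), and your argument is the $\kappa=1$ specialization in which the cone/non-cone case split becomes unnecessary because isotropy gives $\|(t-t^*)_S\|_2\leq\|t-t^*\|_2\leq \tilde r^{SG}(A,\rho,\delta)$ directly. Your reading of the localisation as the $\ell_1$-sphere $\|t-t^*\|_1=\rho$ is also the convention the paper itself uses, both in the proof of Lemma~\ref{lemma_lasso_RE} (where $\|w\|_1=\rho$) and in the only place the sparsity equation is invoked (where $\|f_1-f^*\|=\rho^*$).
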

Lemma~\ref{lemma_lasso} implies that the $A,\delta$-sparsity equation is satisfied with $\rho^* >0$ if the sparsity $s$ is smaller than $\big(\rho^* / \big(   \tilde r^{SG}(A,\rho^*,\delta)  \big)^2$. From easy computations, it follows 
\[
 \rho^* = A  \gamma \bigg( s \sqrt \frac{\log(p)}{N} \vee \sqrt \frac{s \log(1/\delta)}{N} \vee  \sqrt s \frac{|\cO|}{N}\bigg)\enspace,
\]
and
\[
\tilde r^{SG}(A,\rho^*,\delta) = A \gamma \bigg(  \sqrt{ \frac{s\log(p) \vee \log(1/\delta)}{N}} \vee  \frac{|\cO|}{N}\bigg) \enspace.
\]
\textbf{Step 7 :} We use Proposition~\ref{prop:bernstein_huber_loss} to show that the local Bernstein condition holds for $f \in F \cap \big( f^* + \tilde r^{SG}(A,\rho^*,\delta) \bS_2 \cap \rho^* \bB  \big)$. Since $X \sim \cN(0,I_p)$, the point a) in Proposition~\ref{prop:bernstein_huber_loss} is verified. Moreover, the point b) holds and the local Bernstein condition is verified with $A=4/\alpha$  if there exists $\alpha > 0$ satisfying
\begin{equation} \label{cond_noise_hub}
F_{\epsilon} \bigg( \gamma -  c\tilde r^{SG}(4/\alpha, \rho^*,\delta)  \bigg) - F_{\epsilon} \bigg( c  \tilde r^{SG} (4/\alpha, \rho^*,\delta)- \gamma \bigg) \geq \alpha \enspace, 
\end{equation}
where $F_{\epsilon}$ denotes the cdf of $\epsilon$ distributed as $\epsilon_i$, for $i \in \cI$.\\

We are now in position to state the main result for the $\ell_1$-penalized Huber estimator. 
\begin{Theorem} \label{thm:huber_pen}
	Let $\cI \cup \cO$ denote a partition of $\{1, \cdots, N \}$ such that $|\cI|\geq |\cO|$ and $(X_i,Y_i)_{i= 1}^N$ be random variables valued in $\bR^p \times \bR$ such that $(X_i)_{i=1}^N$ are i.i.d random variable with $X_1 \sim \cN(0,I_p)$ and for all $i \in \{ 1,\cdots, N \}$
	\begin{equation*}
		Y_i = \inr{X_i, t^*} + \epsilon_i \enspace,
	\end{equation*}
	where $t^*$ is $s$-sparse. For any $\delta,\alpha > 0$, let
	\[
	\tilde r^{SG}(\alpha,\delta)  =  c \frac{\gamma}{\alpha} \bigg(  \sqrt{ \frac{s\log(p) \vee \log(1/\delta)}{N}} \vee  \frac{|\cO|}{N}\bigg) 
	\]
	Let $(\epsilon_i)_{i \in I}$ are i.i.symmetric random variables independent to $(X_i)_{i \in \cI}$ such that there exists $\alpha>0$ such that
	\begin{equation} \label{cond_hub_noise}
	F_{\epsilon} \big(\gamma -   \tilde r^{SG}(\alpha,\delta)    \big) - F_{\epsilon} \big( 	\tilde r^{SG}(\alpha,\delta)  - \gamma \big)\geq \alpha 
	\end{equation}
	where $	F_{\epsilon} $ denotes the cdf of $\epsilon$, where $\epsilon$ is distributed as $\epsilon_i$, for $i$ in $\cI$. Set
	\begin{equation*}
		\lambda = c \gamma \bigg(  \sqrt{\frac{\log(p)}{N}} \vee \sqrt \frac{\log(1/\delta)}{s N} \vee\frac{|\cO| }{\sqrt sN}  \bigg) \enspace.
	\end{equation*}
	Then with probability larger than $1-\delta$, the estimator $\hat{t}_N^{\gamma,\lambda}$ defined in Equation~\eqref{rerm_huber} satisfies
	\begin{align*}
		&		\|\hat{t}_N^{\gamma,\lambda}- t^*\|_{2} \leq \tilde r^{SG}(\alpha,\delta) , \quad   P\cL_{\hat{t}_N^{\gamma,\lambda}} \leq c\alpha (\tilde r^{SG}(\alpha,\delta))^2 \\
		& \mbox{and } \quad \|\hat{t}_N^{\gamma,\lambda} - t^*\|_1 \leq c \frac{\gamma}{\alpha} \bigg( s \sqrt \frac{\log(p)}{N} \vee \sqrt \frac{s \log(1/\delta)}{N} \vee  \sqrt s \frac{|\cO|}{N}\bigg) 
	\end{align*}
\end{Theorem}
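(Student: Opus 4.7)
The plan is to apply Theorem~\ref{thm:reg} in its sub-Gaussian form by running through the seven-step routine stated just after it; most of the intermediate computations have already been displayed in the paragraphs preceding the statement, so the proof essentially collects them and checks that the hypotheses of Theorem~\ref{thm:reg} are satisfied with $A = 4/\alpha$ and the specified $\rho^*$ and $\lambda$. The structural assumptions are immediate: Assumption~\ref{assum:convex} holds because $F$ is a linear space, Assumption~\ref{assum:distri} follows from the modelling hypotheses, and the Huber loss is convex and $\gamma$-Lipschitz, so Assumption~\ref{assum:lip_conv} holds with $L = \gamma$. Since $X \sim \cN(0,I_p)$, for any unit vector $t \in \bR^p$ the variable $\inr{t,X}$ is standard Gaussian, so the class $F - f^*$ is $1$-sub-Gaussian and the sub-Gaussian branch of Theorem~\ref{thm:reg} applies.

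Next I would carry out the complexity and sparsity computations. Isotropy and the standard bound on the localized Gaussian mean width give $w(F \cap (f^* + r\bB_2 \cap \rho \bB)) \le c\rho\sqrt{\log p}$, yielding $\tilde r_{\cI}^{SG}(A,\rho)^2 = c A\gamma\rho\sqrt{\log(p)/N}$; the closed form of $\tilde r^{SG}(A,\rho,\delta)$ is then obtained by taking the max in~\eqref{comp_par_regu_upper} with $A\gamma\sqrt{\log(1/\delta)/N}$ and $A\gamma|\cO|/N$. Lemma~\ref{lemma_lasso} shows that the $A,\delta$-sparsity equation is satisfied as soon as $100 s \le (\rho^*/\tilde r^{SG}(A,\rho^*,\delta))^2$; solving for the smallest admissible $\rho^*$ produces
\[
\rho^* = cA\gamma\Bigl(s\sqrt{\tfrac{\log p}{N}} \vee \sqrt{\tfrac{s\log(1/\delta)}{N}} \vee \sqrt{s}\,\tfrac{|\cO|}{N}\Bigr), \quad \tilde r^{SG}(A,\rho^*,\delta) = cA\gamma\Bigl(\sqrt{\tfrac{s\log p \vee \log(1/\delta)}{N}} \vee \tfrac{|\cO|}{N}\Bigr).
\]
With $A = 4/\alpha$ these match, up to an absolute constant, the quantity $\tilde r^{SG}(\alpha,\delta)$ in the statement, and the prescribed $\lambda = c\,\tilde r^{SG}(A,\rho^*,\delta)^2/(A\rho^*)$ reduces to the $\lambda$ of the theorem.

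The delicate step, and in my view the main obstacle, is verifying the local Bernstein condition of Assumption~\ref{assum:fast_rates_reg} on the set $F \cap (f^* + \tilde r^{SG}(A,\rho^*,\delta)\bS_2 \cap \rho^*\bB)$. I would invoke Proposition~\ref{prop:bernstein_huber_loss}: point~(a) holds with $C' = 3$ by Gaussianity of the design, and point~(b), thanks to the linear model and the independence of the symmetric noise $\epsilon$ from $X$, reduces to the one-dimensional inequality $F_\epsilon(\gamma - cr) - F_\epsilon(cr-\gamma) \ge \alpha$ with $r = \tilde r^{SG}(4/\alpha,\rho^*,\delta)$. This is exactly the hypothesis~\eqref{cond_hub_noise}, whose role is precisely to absorb the coupling between $\alpha$ and the radius $r$, itself a function of $A = 4/\alpha$. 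Proposition~\ref{prop:bernstein_huber_loss} then yields the local Bernstein inequality with $A = 4/\alpha$.

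Finally, I would invoke Theorem~\ref{thm:reg}: its three simultaneous conclusions translate directly into the three bounds of Theorem~\ref{thm:huber_pen}, namely $\|\hat t_N^{\gamma,\lambda} - t^*\|_2 \le \tilde r^{SG}(\alpha,\delta)$, $P\cL_{\hat t_N^{\gamma,\lambda}} \le c\alpha\bigl(\tilde r^{SG}(\alpha,\delta)\bigr)^2$ (using $A = 4/\alpha$), and $\|\hat t_N^{\gamma,\lambda} - t^*\|_1 \le \rho^*$, with the quantitative expressions for $\rho^*$ and $\tilde r^{SG}(\alpha,\delta)$ displayed above.
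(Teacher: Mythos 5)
Your proposal is correct and follows essentially the same route as the paper: the paper's proof of Theorem~\ref{thm:huber_pen} is precisely the seven-step routine carried out in Section~\ref{app:huber_regu} (sub-Gaussianity of $F-f^*$, the localized Gaussian mean-width bound $c\rho\sqrt{\log p}$, the sparsity equation via Lemma~\ref{lemma_lasso} yielding $\rho^*$ and $\tilde r^{SG}$, the local Bernstein condition via Proposition~\ref{prop:bernstein_huber_loss} with $A=4/\alpha$ reducing to hypothesis~\eqref{cond_hub_noise}), followed by an application of the sub-Gaussian case of Theorem~\ref{thm:reg} with the stated $\lambda$. Your translation of the three conclusions, including $\|\hat t_N^{\gamma,\lambda}-t^*\|_1\leq\rho^*$, matches the paper's argument.
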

There are two situations:
\begin{enumerate}
	\item When the number of outliers $|\cO|$ is smaller than $\sqrt{s \log(p)N}$, the regularization parameter $\lambda$ does not depend on the unknown sparsity and we obtain the (nearly) minimax-optimal rate in sparse linear regression in $\bR^p$~\cite{bellec2016slope,LM_sparsity,dalalyan2017prediction}. Using more involved computations and taking a regularization parameter $\lambda$ depending on the unknown sparsity, we could obtain the exact minimax rate of convergence $s\log(p/s)/N$. 
	\item  When the number of outliers exceeds $\sqrt{s\log(p) N}$ the value of $\lambda$ depends on the unknown quantities $|\cO|$ and $s$. The error rate is deteriorated and becomes linear with respect to the proportion of outliers $|\cO| / N$. Using Theorem~\ref{minimax_rate} and~\cite{chen2016general}, this error rate is minimax optimal (up to a logarithmic term) when $|\cO|$ malicious outliers contaminate only the labels.
\end{enumerate} 
As in Section~\ref{app:hub_reg}, we can assume that the noise follows a standard Cauchy distribution. In this case we can take $\alpha =1/4$ and $\gamma = 2 \tan(\pi/8)$ and Equation~\eqref{cond_hub_noise} holds if
\begin{equation} \label{eq:end}
\sqrt{ \frac{s\log(p) \vee \log(1/\delta)}{N}} \vee  \frac{|\cO|}{N} \leq c \enspace.
\end{equation}
When $\epsilon_i  \sim \cN(0,\sigma^2)$, the local Bernstein condition is verified with $\alpha = 1/4$ and $\gamma = c\sigma$ if Equation~\eqref{eq:end} holds. 
In Section~\ref{sec:simu}, we run simple simulations to illustrate the linear dependence between the error rate and the proportion of outliers. 
\begin{Remark}
	In Theorem~\ref{thm:huber_pen} we assumed that $\mu = \cN(0,I_p)$ to apply Lemma~\ref{lemma_lasso} and compute the local Gaussian-mean width.  It is possible to generalize the result to Gaussian random vectors with covariance matrices $\Sigma$ verifying $RE(s,9)$ \cite{van2009conditions}, where $s$ is the sparsity of $t^*$. Recall that a matrix $\Sigma$ is said to satisfy the restricted eigenvalue condition RE$(s, c_0)$ with some constant $\kappa > 0$, if $\|\Sigma^{1/2} v \|_2  \geq \kappa \|v_J\|_2$ for any vector $v$ in $\bR^p$ and any set $J \subset \{1,\cdots,p \}$ such that $|J| \leq s$ and $\|v_{J^c}\|_1 \leq c_0 \|v_J\|_1$. When $\Sigma$ satisfies the RE$(s,9)$ condition with $\kappa >0$ we get the same conclusion as Theorem~\ref{thm:huber_pen} modulo an extra term $1/\kappa$ in front of $\tilde r_{\cI}(A,\rho^*,\delta)$ (see Section~\ref{RE_huber} for a precise result). 
\end{Remark}	


\subsection{Application to RKHS with the huber loss function} \label{app:rkhs}

We present another example of application of our main results. In particular, we use the routine associated with Theorem~\ref{thm:reg} in the locally bounded case, for the problem of learning in a Reproducible Kernel Hilbert Space (RKHS) $\cH_K$~\cite{steinwart2008support} associated to a bounded positive definite kernel $K$. We are given $N$ pairs $(X_i,Y_i)_{i=1}^N$ of random variables where the $X_i$'s take their values in some measurable space $\cX$ and $Y_i \in \bR$. We introduce a kernel $K: \cX \times \cX \mapsto \bR$ measuring a similarity between elements of $\cX$ i.e $K(x_1,x_2)$ is small if $x_1,x_2 \in \cX$ are “similar".  The kernel $K(\cdot,\cdot)$ is assumed to be bounded (for all $x \in \cX: |K(x,x)| \leq 1$). The main idea of kernel methods is to transport the design data $X_i$'s from the set $\cX$ to a certain Hilbert space via the application $x \mapsto K(x,\cdot) := K_x(\cdot)$ and construct a statistical procedure in this "transported" and structured space. The kernel $K$ is used to generate a Hilbert space known as Reproducing Kernel Hilbert Space (RKHS). Recall that if $K$ is a positive definite function i.e for all $n \in \bN^* $, $x_1,\cdots,x_n \in \cX$ and $c_1,\cdots,c_n \in \bR$, $\sum_{i=1}^n \sum_{j=1}^n c_i c_j K(x_i,x_j) \geq 0$. By Mercer's theorem there exists an orthonormal basis $(\phi_i)_{i=1}^{\infty}$ of $L_2$ such that $\mu \times \mu$ almost surely, $K(x,y) = \sum_{i=1}^{\infty} \lambda_i \phi_i(x) \phi_i(y)$, where $(\lambda)_{i=1}^{\infty}$ is the sequence of eigenvalues (arranged in a non-increasing order) of $T_K$ and $\phi_i$ is the eigenvector corresponding to $\lambda_i$ where 
\begin{align} \label{def_tk}
	\nonumber T_K: \; & L_2 \to L_2 \\
	& (T_Kf)(x) = \int K(x,y) f(y) d\mu (y) 
\end{align}
The Reproducing Kernel Hilbert Space $\cH_K$ is the set of all functions of the form $\sum_{i=1}^{\infty} a_i K(x_i,\cdot)$ where $x_i \in \cX$ and $a_i \in \bR$ converging in $L_2$ endowed with the inner product
\begin{equation*}
	\inr{\sum_{i=1}^{\infty} a_i K(x_i,\cdot),\sum_{i=1}^{\infty} b_i K(y_i,\cdot)} = \sum_{i,j=1}^{\infty} a_i b_j K(x_i,y_i)
\end{equation*}
An alternative way to define a RKHS is via the feature map $\Phi: \cX \mapsto \ell_2$ such that $\Phi(x) = \big(\sqrt{\lambda_i}\phi_i(x)\big)_{i=1}^{\infty} $. Since $(\Phi_k)_{k=1}^{\infty}$ is an orthogonal basis of $\cH_K$, it is easy to see that the unit ball of $\cH_K$ can be expressed as 
\begin{equation}
\bB_{\cH_K} = \{ f_{\beta}(\cdot) = \inr{\beta,\Phi(\cdot)}_{\ell_2}, \; \| \beta \|_{2}  \leq 1  \} 
\end{equation}
where $\inr{\cdot,\cdot}_{\ell_2}$ is the standard inner product in the  Hilbert space $\ell_2$. 
In other words, the feature map $\Phi$ can the used to define an isometry between the two Hilbert spaces $\cH_K$ and $\ell_2$. \\

The RKHS $\cH_K$ is  a convex class of functions from $\cX$ to $\bR$ that can be used as a learning class $F$. Let us assume that 
$Y_i = f^*(X_i)+ \epsilon_i $ where $(X_i)_{i=1}^N$ are i.i.d random variables taking values in $\cX$. The random variables $(\epsilon_i)_{ i \in \cI}$ are symmetric i.i.d random variables independent to $(X_i)_{i \in \cI}$ and $f^*$ is assumed to belong to $\cH_K$. It follows that the \textit{oracle} $f^*$ is also defined as 
\begin{equation*}
	f^* \in \argmin_{f \in \cH_K} \bE [  \ell^{\gamma}(f(X),Y) ]
\end{equation*}
where  $\ell^{\gamma}$ is the Huber loss function. For the sake of simplicity, we assume that $\|f^*\|_{\cH_K} \leq 1$. Without this assumption, it is possible to obtain error rates depending on $\|f^*\|_{\cH_K}$. However, we do not pursue this analysis here to simplify the presentation. Let $f$ be in $\cH_K$. By the reproducing property and the Cauchy-Schwarz inequality we have for all $x,y$ in $\cX$
\begin{align} \label{bounded:RKHS}
	|f(x)-f(y)| = | \inr{f, K_x -K_y} | \leq \|f\|_{\cH_K}  \| K_x - K_y \|_{\cH_K}  
\end{align}
From Equation~\eqref{bounded:RKHS}, it is clear that the norm of a function in the RKHS controls how fast the function varies over $\cX$ with respect to the geometry defined by the kernel (Lipschitz with constant $\|f \|_{\cH_K})$. As a consequence the norm of regularization $\| \cdot \|_{\cH_K}$ is related with its degree of smoothness w.r.t. the metric defined by the kernel on $\cX$ and assuming that $\|f^*\|_{\cH_K} \leq 1$ is equivalent to assume that the \textit{oracle} $f^*$ is smooth enough. The estimators $\hat{f}_{N}^{\gamma,\lambda}$ we study in this section is defined as
\begin{equation} \label{def_svm}
\hat{f}_{N}^{\gamma,\lambda}  = \argmin_{f \in \cH_K} \frac{1}{N} \sum_{i=1}^{N}  \ell^{\gamma}(f(X_i),Y_i) + \lambda \|f\|_{\cH_K}
\end{equation}
We obtain error rates depending on spectrum $(\lambda_i)_{i=1}^{\infty}$ of the integral operator $T_K$ assumed to satisfy the following assumption. 
\begin{Assumption} \label{assum:spectrum}
	The eigenvalues $(\lambda_i)_{i=1}^{\infty}$ of the integral operator $T_K$ satisfy $\lambda_n   \leq c n^{-1/p}$ for some $0 < p < 1$ and $c>0$ an absolute constant.
\end{Assumption}
In Assumption~\ref{assum:spectrum}, the value of $p$ is related with the smoothness of the space $\cH_K$. Different kinds of spectra could be analyzed. It would only change the computation of the complexity fixed-points. For the sake of simplicity we only focus on this example as it has been also studied in~\cite{caponnetto2007optimal,mendelson2010regularization} to obtain fast rates of convergence. \\

Let us use the routine to apply Theorem~\ref{thm:reg} in the locally bounded setting. Indeed, we will see than the localization with respect to the norm induced by the kernel allows to obtain a locally bounded class of functions. \\
\textbf{Step 1:} For any $A,\rho,\delta >0,$ let $\tilde r(A,\rho,\delta) = \tilde r^B(A,\rho,\delta)$. \\
\textbf{Step 2:} Since every Reproducible Kernel Hilbert space is convex, it is clear that assumptions~\ref{assum:distri},~\ref{assum:convex} and~\ref{assum:lip_conv} with $L = \gamma$ are satisfied. \\
\textbf{Step 3:} From Theorem 2.1 in~\cite{mendelson2003performance},  for all $\rho,r >0$
\begin{equation*}
	\textnormal{Rad}_{\cI} \big(\cH_K  \cap (f^*+ r\bB_2 \cap \rho \bB_{\cH_K} \big)  \leq c \sqrt{|\cI|} \bigg( \sum_{k=1}^{\infty} \big(  \rho^2 \lambda_k \wedge r^2 \big) \bigg)^{1/2} \enspace.
\end{equation*}
Under assumption~\ref{assum:spectrum}, straightforward computations give,
\begin{equation*}
	\bigg( \sum_{k=1}^{\infty} \big(  \rho^2 \lambda_k \wedge r^2 \big) \bigg)^{1/2} \leq c \frac{\rho^p}{r^{p-1}},
\end{equation*}
and thus for any $A,\rho >0$
\begin{equation*}
	\tilde r^B_{\cI}(A,\rho) = c (A\gamma)^{1/(p+1)}  \frac{\rho^{p/(p+1)}}{N^{1/(2(p+1))}}
\end{equation*}
\textbf{Step 4:} It follows that
\[
\tilde r^B(A,\rho,\delta) = c \bigg( (A\gamma)^{1/(p+1)}  \frac{\rho^{p/(p+1)}}{N^{1/(2(p+1))}} \vee A\gamma \sqrt \frac{\log(1/\delta)}{N} \vee A \gamma \frac{|\cO|}{N}   \bigg) 
\]
\textbf{Step 5: } From Remark~\ref{remark_comp_bound}, $\rho^* = 20\|f^*\|_{\cH_K} \leq 20 $ satisfies the $A,\delta$-sparsity equation for any $A,\delta >0$. \\
\textbf{Step 6:} From step 5, we easily get
\begin{equation*}
	\tilde r^b(A,\delta) = c \bigg(   \frac{(A\gamma)^{1/(p+1)}}{N^{1/(2(p+1))}} \vee A\gamma \sqrt \frac{\log(1/\delta)}{N} \vee A \gamma \frac{|\cO|}{N}   \bigg) 
\end{equation*}
\textbf{Step 7 } This step consists in verifying that Assumption~\ref{assum:fast_rates_reg} holds. To do so, we use a localized version of Theorem~\ref{prop:bernstein_huber_loss}.

\begin{Proposition}\label{prop:bernstein_huber_loss_loc}
 Let $r,\rho>0$ and let $F_{Y|X=x}$ be the conditional cumulative function of $Y$ given $X=x$. Let us assume that:
	\begin{itemize}
		\item[a)] There exist  $\varepsilon,C' >0$ such that, for all $f$ in $F \cap \big( f^* + \rho \bB_{\cH_K}  \cap r \bS_2 \big) $, we have $\|f-f^*\|_{L_{2+\varepsilon}} \leq C' \|f-f^*\|_{L_2}$.
		\item[b)] Let $\varepsilon,C'$ be the constants defined in a). There exists $\alpha>0$ such that, for all $x\in \bR^p$ and all $z\in \mathbb{R}$ satisfying $ |z-f^*(x) | \leq (\sqrt{2} (C'))^{(2+\varepsilon)/\varepsilon} r$, $F_{Y|X=x}(z+\gamma) - F_{Y|X=x}(z- \gamma)\geqslant \alpha$.
	\end{itemize}
Then, for all $f \in F \cap \big( f^* + \rho \bB_{\cH_K}  \cap r \bS_2  \big)$, $ (4/\alpha) P\cL_f^{\gamma} \geq \norm{f-f^*}_{L_2}^2$, where $P\cL_f^{\gamma}$ denotes the excess risk associated with the Huber loss function with parameter $\gamma >0$.
\end{Proposition}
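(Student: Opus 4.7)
The plan is to reproduce the proof of Proposition~\ref{prop:bernstein_huber_loss} (i.e.\ Theorem~7 of \cite{ChiLecLer:2018}) and observe that the original argument proceeds pointwise in $f$, so assumptions~(a) and~(b) only ever need to hold for the specific $f$ under consideration. Consequently, weakening~(a) from ``for all $f \in F$'' to ``for all $f \in F \cap (f^* + \rho \bB_{\cH_K} \cap r \bS_2)$'' changes nothing in the derivation, provided one restricts attention to that localized set at the outset. The role of the extra $\rho\bB_{\cH_K}$--localization is purely to make hypothesis~(a) tenable in RKHS settings where a global $L_{2+\varepsilon}/L_2$ equivalence fails but the reproducing property $|f(x)| \leq \|f\|_{\cH_K}\sqrt{K(x,x)}$ forces it to hold on kernel balls.

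Fix $f \in F \cap (f^* + \rho \bB_{\cH_K} \cap r \bS_2)$ and condition on $X$. The map $u \mapsto \ell^{\gamma}(u,y)$ is convex, $1$-strongly convex on $\{|u-y|\leq \gamma\}$, and $\gamma$-Lipschitz everywhere. Combining the subdifferential inequality at $f^*(x)$ with strong convexity on the ``quadratic region'' yields a pointwise lower bound of the form
\begin{equation*}
\E\bigl[\ell^{\gamma}(f(X),Y) - \ell^{\gamma}(f^*(X),Y) \,\big|\, X=x\bigr] \;\geq\; \tfrac{1}{2}(f(x)-f^*(x))^2 \, \bP\bigl(Y \in I_{f,f^*}(x) \,\big|\, X=x\bigr),
\end{equation*}
where $I_{f,f^*}(x)$ is an interval of half-width $\gamma$ centred at a point within distance $|f(x)-f^*(x)|$ of $f^*(x)$. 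This is the standard elementary computation for the Huber loss and does not use any hypothesis on the distribution of $Y$ or on $f$.

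Next I invoke hypothesis~(b): whenever $|f(x)-f^*(x)| \leq (\sqrt{2}C')^{(2+\varepsilon)/\varepsilon} r =: C_\varepsilon r$, the interval $I_{f,f^*}(x)$ sits inside the region where $F_{Y|X=x}$ carries mass at least $\alpha$. Integrating in $X$ gives
\begin{equation*}
P\cL^{\gamma}_f \;\geq\; \tfrac{\alpha}{2}\, \E\Bigl[(f(X)-f^*(X))^2 \, \mathbbm{1}\bigl\{|f(X)-f^*(X)| \leq C_\varepsilon r\bigr\}\Bigr].
\end{equation*}
To convert the truncated second moment into the full one, apply Markov and H\"older with hypothesis~(a) (this is the only place where~(a) enters, and it is needed only for the fixed $f$ above, not uniformly):
\begin{equation*}
\E\Bigl[(f-f^*)^2(X) \, \mathbbm{1}\bigl\{|f(X)-f^*(X)| > C_\varepsilon r\bigr\}\Bigr] \;\leq\; \frac{\|f-f^*\|_{L_{2+\varepsilon}}^{2+\varepsilon}}{(C_\varepsilon r)^\varepsilon} \;\leq\; \frac{(C')^{2+\varepsilon}\,\|f-f^*\|_{L_2}^{2+\varepsilon}}{(C_\varepsilon r)^\varepsilon} \;=\; \tfrac{1}{2}\|f-f^*\|_{L_2}^2,
\end{equation*}
the last equality coming from $\|f-f^*\|_{L_2} = r$ and the choice of $C_\varepsilon$. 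Subtracting gives $P\cL^{\gamma}_f \geq (\alpha/4)\|f-f^*\|_{L_2}^2$, which is the claim.

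I do not expect a serious obstacle here: the only subtle point is checking that hypothesis~(a), which in the earlier Proposition~\ref{prop:bernstein_huber_loss} was assumed globally on $F$, is used only once and only for the specific $f$ being analysed, so the localization to $\rho\bB_{\cH_K} \cap r \bS_2$ in the hypotheses matches exactly the conclusion stated on the same set. In the RKHS application this is essential because the kernel-norm localization, combined with $|K(x,x)|\leq 1$, provides the $L_\infty$ control that in turn yields the $L_{2+\varepsilon}/L_2$ equivalence of~(a).
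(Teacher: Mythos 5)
Your overall strategy coincides with the paper's: the paper gives no separate proof of Proposition~\ref{prop:bernstein_huber_loss_loc}, it simply observes that the proof of Proposition~\ref{prop:bernstein_huber_loss} (Theorem~7 of the cited work) treats each $f$ separately, so hypotheses a) and b) need only hold on the localized set $F\cap\big(f^*+\rho\bB_{\cH_K}\cap r\bS_2\big)$; your truncation step via H\"older/Markov with $C_\varepsilon=(\sqrt2\,C')^{(2+\varepsilon)/\varepsilon}$, giving $\E\big[(f-f^*)^2(X)\,\1\{|f-f^*|(X)>C_\varepsilon r\}\big]\le (C')^{2+\varepsilon}r^2/C_\varepsilon^{\varepsilon}\le r^2/2$ and hence $P\cL^{\gamma}_f\ge(\alpha/4)r^2$, reproduces that argument with the right constants.

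One intermediate step is, as written, false and needs the standard repair. The ``pointwise'' bound $\E\big[\ell^{\gamma}(f(X),Y)-\ell^{\gamma}(f^*(X),Y)\mid X=x\big]\ge \tfrac12(f(x)-f^*(x))^2\,\bP\big(Y\in I_{f,f^*}(x)\mid X=x\big)$ cannot hold conditionally on $X=x$ without further assumptions: $f^*$ minimizes the \emph{integrated} risk over $F$, not the conditional risk at each $x$, so when the conditional law of $Y$ given $X=x$ is centred far from $f^*(x)$ the left-hand side may be negative while the right-hand side is positive (your claim that this step ``does not use any hypothesis on the distribution of $Y$'' is precisely what fails). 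The correct argument keeps the first-order term: with $g_x(u)=\E[\ell^{\gamma}(u,Y)\mid X=x]$, hypothesis b) gives $g_x(f(x))-g_x(f^*(x))\ge g_x'(f^*(x))\,(f(x)-f^*(x))+\tfrac{\alpha}{2}(f(x)-f^*(x))^2\,\1\{|f(x)-f^*(x)|\le C_\varepsilon r\}$, and the linear term is discarded only after integrating in $X$, through the variational inequality $\E\big[g_X'(f^*(X))\,(f(X)-f^*(X))\big]\ge 0$, which uses the convexity of $F$ (Assumption~\ref{assum:convex}) and the fact that the localized $f$ still belongs to $F$. With that term reinstated the rest of your computation goes through verbatim, and your key observation --- that a) and b) are invoked only for the fixed $f$ under study, so localizing them to $F\cap\big(f^*+\rho\bB_{\cH_K}\cap r\bS_2\big)$ costs nothing --- is exactly the content of the paper's remark following the proposition.
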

The only difference with Proposition~\ref{prop:bernstein_huber_loss}, is that the point a)  and the conclusion hold for functions $f$ in $F \cap \big( f^* + \rho \bB_{\cH_K}  \cap r \bS_2 \big) $. Proposition~\ref{prop:bernstein_huber_loss_loc} is a refinement of Proposition~\ref{prop:bernstein_huber_loss} where a localization with respect to the regularization norm is added. \\
Let $f$ in $\cH_K$ such that $\|f-f^*\|_{\cH_K} \leq \rho$ and $\|f-f^*\|_{L_2} = r$. Since $|f(x) - g(x)| = |\inr{f-g,K_x}|$ for any $f,g \in \cH_K$, $x \in \cX$ we get
\begin{equation*}
	\|f-f^{*}\|_{L_{2+\varepsilon}}^{2+\varepsilon} = \int (f(x)-f^*(x))^{2+\varepsilon} dP_X(x) \leq  (\rho  )^{\varepsilon} \|f-f^*\|_{L_2}^{2}
\end{equation*}
Since $\|f-f^*\|_{L_2} = r$, it follows that
\begin{equation*}
	\|f-f^{*}\|_{L_{2+\varepsilon}} \leq  \bigg( \frac{\rho}{r} \bigg)^{\varepsilon/(2+\varepsilon)}  \|f-f^*\|_{L_2}. 
\end{equation*} 
Therefore, the point a) holds with $C' =  (\rho/r)^{\varepsilon/(2+\varepsilon)}$. Let us turn to the point b). From the fact that $C'= (\rho /r)^{\varepsilon/(2+\varepsilon)}$, we have $(\sqrt{2}C')^{(2+\varepsilon)/\varepsilon} r = 2^{(2+\varepsilon)/2\varepsilon} \rho$ and the point b) can be rewritten as, there exists $\alpha >0$ such that
\begin{equation} \label{cond_bern_rkhs}
F_{\epsilon} ( \gamma - c \rho ) - F_{\epsilon} ( c \rho  - \gamma ) \geq \alpha 
\end{equation}
where $F_{\epsilon}$ denotes the cdf of $\epsilon$ distributed as $\epsilon_i$ for $i \in \cI$. Equation~\eqref{cond_bern_rkhs}, simply means that the noise $\epsilon$ puts enough mass around $0$. In this setting we have $\rho= \rho^* = c$ and Equation~\eqref{cond_bern_rkhs} becomes,
\begin{equation*} 
	F_{\epsilon} ( \gamma - c) - F_{\epsilon} ( c  - \gamma ) \geq \alpha 
\end{equation*}
\textbf{Step 8: } Let us turn to the local boundedness assumption. Since $|f(x) - f^*(x)| = |\inr{f-f^*,K_x}|$ for any $f \in \cH_K$, $x \in \cX$, if $\|f-f^*\|_{\cH_K} \leq \rho^*$ we get $|f(x) - f^*(x)| \leq  \rho^* = c$ and the local boundedness assumption is well-satisfied. \\

The fact that the boundedness assumption is only required to hold locally is essential. It is obvious that it does not hold here over the whole class $F = \cH_{K}$. We are now in position to state our main theorem for regularized learning in RKHS with the Huber loss function. 
\begin{Theorem} \label{thm:huber_pen_rkhs}
	Let $\cH_K$ be a reproducible kernel Hilbert space associated with kernel $K$, where $|K(x,x)| \leq 1$, for any $x \in \cX$. Let $\cI \cup \cO$ denote a partition of $\{1, \cdots, N \}$ such that $|\cI|\geq |\cO|$ and $(X_i,Y_i)_{i= 1}^N$ be random variables valued in $\cX \times \bR$ such that $(X_i)_{i=1}^N$ are i.i.d random variable and for all $i \in \{ 1,\cdots, N \}$
	\begin{equation*}
		Y_i = f^*(X_i) + \epsilon_i \enspace,
	\end{equation*}
	where $f^*$ belongs to $\cH_K$ and $\|f^*\|_{\cH_K} \leq  1$.  Assume that $(\epsilon_i)_{i \in I}$ are i.i.d symmetric random variables independent to $(X_i)_{i \in \cI}$ such that there exists $\alpha>0$ such that
	\begin{equation} \label{cond_hub_noise_rkhs}
	F_{\epsilon} \big(\gamma -  c   \big) - F_{\epsilon} \big(   c   - \gamma \big)\geq \alpha 
	\end{equation}
	where $	F_{\epsilon} $ denotes the cdf of $\epsilon$ where $\epsilon$ is distributed as $\epsilon_i$, for $i$ in $\cI$. Grant Assumption~\ref{assum:spectrum} and let
	\[
		\tilde r(\alpha,\gamma) = c \bigg(   \frac{(\gamma/\alpha)^{1/(p+1)}}{N^{1/(2(p+1))}} \vee \frac{\gamma}{\alpha}  \sqrt \frac{\log(1/\delta)}{N} \vee  \frac{\gamma}{\alpha} \frac{|\cO|}{N}   \bigg) 
	\]
	Set $\lambda = c \alpha\tilde r(\alpha, \gamma)$. Then with probability larger than $1-\delta$, the estimator $\hat{f}_N^{\gamma,\lambda}$ defined in Equation~\eqref{def_svm} satisfies
	\begin{align*}
		&		\| \hat{f}_N^{\gamma,\lambda}- f^* \|_{2}^2  \leq \tilde r(\alpha, \gamma) \quad \textnormal{and} \quad  P\cL_{\hat{f}_N^{\gamma,\lambda}} \leq   c  \alpha \tilde r(\alpha, \gamma)
	\end{align*}
\end{Theorem}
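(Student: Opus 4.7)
The plan is to apply Theorem~\ref{thm:reg}, part~2 (the locally bounded case), by checking each step of the eight-step routine stated immediately after that theorem. The foundational assumptions are immediate: $\cH_K$ is convex (Assumption~\ref{assum:convex}), the Huber loss is convex and $\gamma$-Lipschitz so Assumption~\ref{assum:lip_conv} holds with $L=\gamma$, and the i.i.d.\ structure of the informative sample is exactly Assumption~\ref{assum:distri}. Many ingredients are essentially prepared in the discussion preceding the theorem, so the proof is largely a matter of assembling them carefully.

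For the complexity parameter (steps 3--4), I would invoke \cite[Theorem~2.1]{mendelson2003performance}, which bounds the localized Rademacher complexity of an RKHS ellipsoid by $c\sqrt{|\cI|}\bigl(\sum_{k}(\rho^{2}\lambda_{k}\wedge r^{2})\bigr)^{1/2}$. Under Assumption~\ref{assum:spectrum}, splitting the sum at the index where $\rho^{2}\lambda_{k}\asymp r^{2}$ yields $\bigl(\sum_{k}\rho^{2}\lambda_{k}\wedge r^{2}\bigr)^{1/2}\leq c\,\rho^{p}/r^{p-1}$, which inverts to the claimed $\tilde r_{\cI}^{B}(A,\rho)=c(A\gamma)^{1/(p+1)}\rho^{p/(p+1)}N^{-1/(2(p+1))}$. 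For the sparsity equation (step~5) I would use Remark~\ref{remark_comp_bound}: since $\|f^{*}\|_{\cH_{K}}\leq 1$, the choice $\rho^{*}=20$ forces $0\in F\cap(f^{*}+(\rho^{*}/20)\bB_{\cH_{K}})$, so $\Gamma_{f^{*}}(\rho^{*})$ equals the whole dual unit ball and $\Delta(A,\rho^{*},\delta)=\rho^{*}$ automatically. Plugging this $\rho^{*}$ into the formula for $\tilde r^{B}(A,\rho,\delta)$ produces, with $A=4/\alpha$, exactly the quantity $\tilde r(\alpha,\gamma)$ appearing in the statement.

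The only step requiring genuine work, and thus the main technical obstacle, is the joint verification of the local Bernstein condition (Assumption~\ref{assum:fast_rates_reg}) and the local boundedness requirement~\eqref{cond_locally_bounded_reg}. Both rest on a single consequence of the reproducing property: for any $f\in\cH_{K}$ with $\|f-f^{*}\|_{\cH_{K}}\leq\rho^{*}$ and any $x\in\cX$,
\[
|f(x)-f^{*}(x)|=|\inr{f-f^{*},K_{x}}_{\cH_{K}}|\leq \|f-f^{*}\|_{\cH_{K}}\sqrt{K(x,x)}\leq \rho^{*}.
\]
Since $\rho^{*}$ is an absolute constant, this delivers the boundedness requirement~\eqref{cond_locally_bounded_reg} up to trivial rescaling. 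The same pointwise bound gives, for $f$ on the $L_{2}$-sphere of radius $r$, the moment comparison $\|f-f^{*}\|_{L_{2+\varepsilon}}\leq (\rho^{*}/r)^{\varepsilon/(2+\varepsilon)}\|f-f^{*}\|_{L_{2}}$ needed in point~(a) of Proposition~\ref{prop:bernstein_huber_loss_loc}, with $C'=(\rho^{*}/r)^{\varepsilon/(2+\varepsilon)}$. The threshold $(\sqrt{2}C')^{(2+\varepsilon)/\varepsilon}r=2^{(2+\varepsilon)/(2\varepsilon)}\rho^{*}$ in point~(b) collapses to an absolute constant, which is precisely why hypothesis~\eqref{cond_hub_noise_rkhs} on the cdf of $\epsilon$ suffices and yields the Bernstein constant $A=4/\alpha$ through the proposition.

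With all eight steps verified, invoking Theorem~\ref{thm:reg}, part~2, with $A=4/\alpha$, $\rho^{*}=20$ and the prescribed $\lambda\asymp (\tilde r^{B})^{2}/(A\rho^{*})\asymp \alpha\,\tilde r(\alpha,\gamma)^{2}$ yields the advertised error bound $\|\hat f_{N}^{\gamma,\lambda}-f^{*}\|_{L_{2}}\leq \tilde r(\alpha,\gamma)$ together with the excess-risk bound $P\cL_{\hat f_{N}^{\gamma,\lambda}}\leq c\,\alpha\,\tilde r(\alpha,\gamma)^{2}$, completing the proof.
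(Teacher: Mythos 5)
Your proposal is correct and follows essentially the same route as the paper: the paper's proof of this theorem is precisely the eight-step routine you carry out (Mendelson's localized Rademacher bound under Assumption~\ref{assum:spectrum}, the norm-dependent choice $\rho^{*}=20\|f^{*}\|_{\cH_K}\leq 20$ via Remark~\ref{remark_comp_bound}, and the reproducing-property bound $|f(x)-f^{*}(x)|\leq\|f-f^{*}\|_{\cH_K}$ used both for point (a) of Proposition~\ref{prop:bernstein_huber_loss_loc} with $C'=(\rho^{*}/r)^{\varepsilon/(2+\varepsilon)}$ and for the local boundedness requirement), followed by an invocation of Theorem~\ref{thm:reg} in the locally bounded case. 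Your final scalings $\lambda\asymp\alpha\,\tilde r^{2}$ and $P\cL\leq c\alpha\,\tilde r^{2}$ are exactly what Theorem~\ref{thm:reg} delivers with $A=4/\alpha$ and $\rho^{*}$ constant, matching the paper's own derivation (the linear-in-$\tilde r$ expressions in the theorem's statement appear to be a typographical inconsistency of the paper, not a gap in your argument).
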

Theorem~\ref{thm:huber_pen_rkhs} holds with no assumption on the design $X$. 

\begin{enumerate}
	\item When
\[
|\cO| \leq (\alpha/\gamma)^{p/(p+1)} N^{(2p+1)/(2p+2)} \enspace,
\]
we recover the same rates as \cite{smale2007learning,mendelson2010regularization} even when the target $Y$ is heavy-tailed. In \cite{smale2007learning,mendelson2010regularization} the authors assume that $Y$ is bounded while in \cite{caponnetto2007optimal} the noise is assumed to be light-tailed. We generalize their results to heavy-tailed noise. 
\item When 
\[
|\cO| \geq (\alpha/\gamma)^{p/(p+1)} N^{(2p+1)/(2p+2)} \enspace,
\]
 the error rate is deteriorated and becomes linear with respect to the proportion of outliers.\\ 
\end{enumerate}
When the noise is Cauchy distributed, we can take $\gamma$ a large enough absolute constant to verify Equation~\eqref{cond_hub_noise_rkhs}. We obtain an error rate of order $N^{-1/(p+1)}$. Depending on the value of $p$ we have obtained fast rates of convergence for regularized Kernel methods. The faster the spectrum of $T_K$ decreases the faster the rates of convergence. 

\section{Conclusion and perspectives}

We have presented general analyses to study ERM and RERM when $|\cO|$ outliers contaminate the labels when 1) the class $F-f^*$ is sub-Gaussian or 2) when the class $F-f^*$ is locally bounded. We use these “meta theorems" to study Huber's M-estimator with no regularization or penalized with the $\ell_1$ norm. Under a very weak assumption on the noise (note that it can even not be integrable), we have obtained minimax-optimal rate of convergence for these two examples when $|\cO|$ malicious outliers corrupt the labels. We also have obtained fast rates for regularized learning problems in RKHS when the target $Y$ is unbounded and heavy-tailed. \\
For the sake of simplicity, we have only presented two examples of applications. Many procedures can be analyzed as it has be done in~\cite{ChiLecLer:2019} such as Group Lasso, Fused Lasso, SLOPE etc. The results can be easily extented when the sub-Gaussian assumption over $F-f^*$ is relaxed. It would only degrade the confidence in the main theorems (assuming for example that the class is sub-exponential). The conclusion would be similar. As long as the proportion of outliers is smaller than the rate of convergence, both ERM and RERM behave as if there was to contamination. 

\appendix

\section{Simulations}\label{sec:simu}
In this section, we present simple simulations to illustrate our theoretical findings. We consider regression problems in $\mathbb R^p$ both non-regularized and penalized with the $\ell_1$-norm. For $i=1,\cdots,N$, let us consider the following model:
\begin{equation*}
	Y_i = \inr{X_i,t^*} + \epsilon_i
\end{equation*}
where $(X_i)_{i=1}^N$ are i.i.d random variables distributed as $\mathcal N(0,I_p)$, $(\epsilon_i)_{i \in \mathcal I}$ are symmetric independent to $X$ random variables. Nothing is assumed on $(\epsilon_i)_{i \in \mathcal O}$. We consider different distributions for the noise $(\epsilon_i)_{i \in \mathcal I}$ . We consider
\begin{itemize}
	\setlength{\itemsep}{1pt}
	\setlength{\parskip}{0pt}
	\setlength{\parsep}{0pt}
	
	\item $\epsilon_i \sim \mathcal N(0,\sigma^2) $ Gaussian distribution.
	\item $ \epsilon_i \sim \mathcal T(2)$ Student distribution with 2-degree of freedom.
	\item $ \epsilon_i \sim \mathcal C (1)$ Cauchy distribution.
\end{itemize}
We study Huber's $M$ estimator defined as
$$ \hat{t}_N^{\gamma} \in \argmin_{t \in \mathbb R^p} \frac{1}{N} \sum_{i=1}^{N} \ell^{\gamma}(f(X_i),Y_i) $$
where $\ell^{\gamma}: \mathbb R \times \mathbb R  \mapsto \mathbb R^+$ is the Huber loss function defined as, $\gamma >0$, $u,y\in \mathbb R$, by
$$
\ell^{\gamma}(u,y) =  
\begin{cases}
\frac{1}{2}(y-u)^2&\text{ if }|u-y| \leq \gamma\\
\gamma|y-u|-\frac{\gamma^2}{2}&\text{ if }|u-y| > \gamma
\end{cases}
$$
Note that other loss functions could be considered as the absolute loss function, or more generally, any quantile loss function. According to Theorem~\ref{thm:huber}, we have
$$
\| \hat{t}_N^{\gamma} -t^*\|_2 \leq c \gamma\bigg( \sqrt{ \frac{p}{N} } + \frac{|\mathcal O|}{N}  \bigg)
$$
where $c>0$ is an absolute constant. We add malicious outliers following a uniform distribution over $[-10^{-5}, 10^5 ]$. We expect to obtain an error rate proportional to the proportion of outliers $|\cO|/ N$. We ran our simulations with $N=1000$ and $p=50$. The only hyper-parameter of the problem is $\gamma$. For the sake of simplicity we took $\gamma = 1$ for all our simulations. We see on Figure~\ref{fig:erm} that no matter the noise, the error rate is proportional to the proportion of outliers which matches with our theoretical findings. 
\begin{figure}[h] 
	\center
	\includegraphics[scale=0.7]{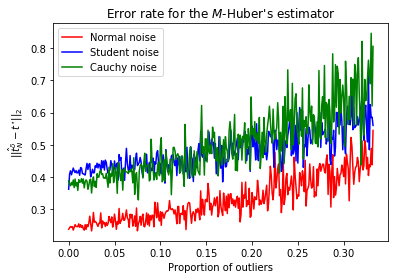}
	\caption{Error rate for the Huber's $M$-estimator ($p=50$ and $N = 1000$)}\label{fig:erm}
\end{figure}

In a second experiment, we study $\ell_1$ penalized $M$-Huber's estimator defined as
$$ \hat{t}_N^{\lambda, \gamma} \in \argmin_{t \in \mathbb R^p} \frac{1}{N} \sum_{i=1}^{N} \ell^{\gamma}(f(X_i),Y_i) + \lambda \|t\|_1$$
where $\ell^{\gamma}: \mathbb R \times \mathbb R  \mapsto \mathbb R^+$ is the Huber loss function and $\lambda > 0$ is a hyper-parameter. According to Theorem~\ref{thm:huber_pen} we have
$$
\| \hat{t}_N^{\gamma} -t^*\|_2 \leq c \gamma \bigg( \sqrt{ \frac{s \log(p)}{N} } + \frac{|\mathcal O|}{N}  \bigg)
$$
where $c > 0$ is an absolute constant. We ran our simulations with $N=1000$ and $p=1000$ and $s = 50$. The hyper-parameters of the problem are $\gamma$ and $\lambda$. For the sake of simplicity we take $\gamma = 1$ and $\lambda = 10^{-3}$ for all our simulations. We see on Figure~\ref{fig:rerm} that no matter the noise, the error rate is proportional to the proportion of outliers which matches our theoretical findings. The fact that the error rate may be large comes to the fact that we did not optimize the value of $\lambda$. 

\begin{figure}[h] 
	\center
	\includegraphics[scale=0.7]{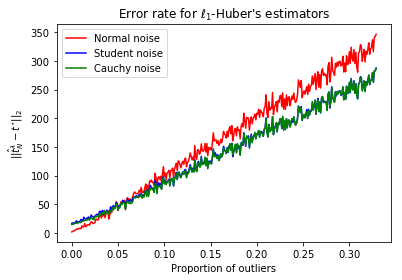}
	\caption{Error rate for $\ell_1$ penalized Huber's $M$-estimator ($p=1000$ and $N = 1000$ and $s = 50$)}\label{fig:rerm}
\end{figure}

\section{Lower bound minimax risk in regression where only the labels are contaminated} \label{sec:minimax}
This section is built on the work~\cite{chen2018robust} where the authors establish a general minimax theory for the $\varepsilon$-contamination model defined as $P_{(\varepsilon,\theta,Q)} = (1-\varepsilon)P_{\theta} + \varepsilon Q$ given a general statistical experiment $\{P_{\theta}, \theta \in \Theta \}$. A proportion $\varepsilon$ of outliers with same the distribution $Q$ contaminate $P_{\theta}$. Given a loss function $L(\theta_1,\theta_2)$, the minimax rate for the class $\{P_{(\varepsilon,\theta,Q)}, \theta \in \Theta,Q \}$ depends on the modulus of continuity defined as:
\begin{equation} \label{modulus_continuity}
w(\varepsilon,\Theta) = \sup \bigg \{  L(\theta_1,\theta_2)  :  TV(P_{\theta_1}, P_{\theta_2}) \leq \frac{\varepsilon}{1 - \varepsilon}, \theta_1, \theta_2 \in \Theta  \bigg  \}
\end{equation}
where $TV(P_{\theta_1}, P_{\theta_2})$ denotes the total variation distance between $P_{\theta_1}$ and $P_{\theta_2}$ defined as $TV(P_{\theta_1}, P_{\theta_2}) = \sup_{A \in \cF} |P_{\theta_1}(A) - P_{\theta_2}(A)|$, for $\cF$ the $\sigma$-algebra onto which $P_{\theta_1}$ and $P_{\theta_2}$ are defined.

\begin{Theorem}[Theorem 5.1~\cite{chen2018robust}] \label{thm_gao} Suppose there is some $\mathcal M(0)$ such that for $\varepsilon = 0$
	\begin{equation} \label{eq_gao}
	\inf_{\hat \theta} \sup_{\theta \in \Theta} \sup_{Q} P_{(\varepsilon,\theta,Q)} \bigg( L(\theta,\hat \theta)  \geq \mathcal M(\varepsilon)  \bigg) \geq c
	\end{equation}
	holds. Then, for any $\varepsilon \in [0,1]$ \eqref{eq_gao} holds for $\mathcal M(\varepsilon) = c \big(\mathcal M(0) \vee w(\varepsilon,\Theta) \big)$.
\end{Theorem}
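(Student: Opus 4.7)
The claim decouples into two separate lower bounds, $\mathcal{M}(\varepsilon) \gtrsim \mathcal{M}(0)$ and $\mathcal{M}(\varepsilon) \gtrsim w(\varepsilon,\Theta)$, which can be combined at the end because $a \vee b \leq 2\max(a,b)$. The plan is to handle each piece separately.

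The first lower bound is essentially free: for every $\theta \in \Theta$, the choice of contaminating distribution $Q = P_{\theta}$ gives $P_{(\varepsilon,\theta,P_{\theta})} = (1-\varepsilon) P_\theta + \varepsilon P_\theta = P_{\theta}$, hence
\[
\sup_{Q} P_{(\varepsilon,\theta,Q)}\bigl(L(\theta,\hat{\theta}) \geq \mathcal{M}(0)\bigr) \geq P_{\theta}\bigl(L(\theta,\hat{\theta}) \geq \mathcal{M}(0)\bigr).
\]
Taking the supremum over $\theta$ and applying the hypothesis at $\varepsilon=0$ yields the first half directly, with no loss in constants.

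For the $w(\varepsilon,\Theta)$ part I would run a two-point Le Cam argument. Fix $\theta_1,\theta_2 \in \Theta$ with $\delta := TV(P_{\theta_1},P_{\theta_2}) \leq \varepsilon/(1-\varepsilon)$ and $L(\theta_1,\theta_2)$ arbitrarily close to $w(\varepsilon,\Theta)$. The key lemma to establish is a coupling: there exist probability measures $Q_1,Q_2$ such that
\[
(1-\varepsilon)P_{\theta_1} + \varepsilon Q_1 = (1-\varepsilon)P_{\theta_2} + \varepsilon Q_2.
\]
Concretely, using the Hahn--Jordan decomposition $P_{\theta_1} - P_{\theta_2} = (P_{\theta_1}-P_{\theta_2})_+ - (P_{\theta_1}-P_{\theta_2})_-$ (each part having mass $\delta$) and picking any reference probability $R$, the candidates
\[
Q_i := \frac{1-\varepsilon}{\varepsilon}\bigl(P_{\theta_{3-i}} - P_{\theta_i}\bigr)_+ + \Bigl(1 - \tfrac{(1-\varepsilon)\delta}{\varepsilon}\Bigr) R, \qquad i=1,2,
\]
are well-defined probabilities precisely because $\delta \leq \varepsilon/(1-\varepsilon)$, and by construction $Q_2 - Q_1 = \frac{1-\varepsilon}{\varepsilon}(P_{\theta_1}-P_{\theta_2})$, which rearranges to the required identity. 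This coupling construction is the one genuinely delicate step; the rest is standard.

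Once the coupling is in hand, the observed sample has identical law under the two contaminated experiments $(\theta_1,Q_1)$ and $(\theta_2,Q_2)$, so any estimator $\hat{\theta}$ has matching distributions in both. A standard Le Cam separation argument---using that if $L(\theta_1,\hat\theta) < L(\theta_1,\theta_2)/2$ then $L(\theta_2,\hat\theta) \geq L(\theta_1,\theta_2)/2$, which follows from the triangle-like property of $L$ used throughout the paper---gives
\[
\max_{j=1,2} P_{(\varepsilon,\theta_j,Q_j)}\Bigl(L(\theta_j,\hat\theta) \geq L(\theta_1,\theta_2)/2\Bigr) \geq 1/2.
\]
Taking the supremum over admissible $(\theta_1,\theta_2)$ yields the desired lower bound at level $w(\varepsilon,\Theta)/2$. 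Combining this with the first bound and absorbing numerical factors into $c$ proves the stated result. The principal obstacle is verifying the coupling lemma carefully in the abstract measure-theoretic setting (in particular checking non-negativity of the two candidate measures, which is exactly where the assumption $\delta \leq \varepsilon/(1-\varepsilon)$ becomes necessary); everything else reduces to routine Le Cam-style manipulations.
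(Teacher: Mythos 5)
Your proof is correct and follows essentially the same route as the paper: Theorem~\ref{thm_gao} is quoted from \cite{chen2018robust} rather than reproved, but the paper's proof of the analogous Theorem~\ref{minimax_rate} is exactly this two-point Le Cam argument, building the contaminating distributions from the positive and negative parts of $P_{\theta_1}-P_{\theta_2}$ normalized by the total variation distance so that the two $\varepsilon$-contaminated mixtures coincide. Your only deviations are cosmetic: you pad $Q_1,Q_2$ with a reference measure $R$ to absorb the slack when $TV(P_{\theta_1},P_{\theta_2})<\varepsilon/(1-\varepsilon)$, whereas the paper works with $\varepsilon'\le\varepsilon$ satisfying $TV=\varepsilon'/(1-\varepsilon')$ and invokes Lemma 7.2 of \cite{chen2018robust}, and you make explicit the final separation step (requiring the triangle-type property of $L$, with constants absorbed into $c$) that the paper leaves implicit.
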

$w(\varepsilon,\Theta)$ is the price to pay in the minimax rate when a proportion $\varepsilon$ of the samples are contaminated. To illustrate Theorem~\ref{thm_gao}, let us consider the linear regression model:
\begin{equation*}
	Y_i = \inr{X_i,\theta} + \epsilon_i
\end{equation*}
where without contamination $X_i \sim \cN(0,\Sigma)$, $\epsilon_i \sim \cN(0,\sigma^2)$ are independent. In~\cite{chen2016general}, the authors consider a setting when both the design $X$ and the response variable in the model can be contaminated i.e $(X_1,Y_1),\cdots,(X_N,Y_N) \sim  (1-\varepsilon)P_{\theta} + \varepsilon Q$, with $P_{\theta} = P(X) P(Y|X)$, $P(X) = \cN(0,\Sigma)$ and $P(Y|X) = \cN(X^T\theta,\sigma^2)$. They establish that the minimax optimal risk over the class of $s$-sparse vectors for the metric $L(\theta_1,\theta_2) = \|\theta_1-\theta\|_2^2$ is given by 
\begin{equation*}
	\sigma^2 \bigg(\frac{s\log(p/s)}{N} \vee \varepsilon^2  \bigg).
\end{equation*}
The question of main interest in our setting is the following: does the minimax risk for regression problem in the $\varepsilon$-contamination model remain the same when only the labels are contaminated ? \\
The following theorem answers to the above question.
\begin{Theorem}\label{minimax_rate} 
	Let $\{P_{\theta} = P_{(X,Y)}^{\theta}$ with $ Y = f_{\theta}(X) + \epsilon, \theta \in \Theta \} $ be a statistical regression model. For any $\theta \in \Theta$, $\varepsilon \in[0,1]$ let 
	\begin{align*}
		\cP_{\theta,\varepsilon} = \bigg\{ & \big( (1-\varepsilon)P_{\theta} + \varepsilon Q_{\theta} \big)^{ \otimes_{i=1}^{N}}, P_{\theta} = P_{(X,Y)}^{\theta} \mbox{  with  } Y = f_{\theta}(X) + \epsilon \\
		&  Q_{\theta} = P_{(X,\tilde Y)}^{\theta} \mbox{  with  } \tilde Y = f_{\theta}(X) + \tilde \epsilon \bigg\}
	\end{align*}
	Suppose there is some $\mathcal M(0)$ such that for $\varepsilon = 0$
	\begin{equation} \label{eq_gao2}
	\inf_{\hat \theta} \sup_{R_{\theta,\varepsilon} \in 	\cP_{\theta,\varepsilon}, \theta \in \Theta} R_{\theta,\varepsilon} \bigg( L(\theta,\hat \theta)  \geq \mathcal M(\varepsilon)  \bigg) \geq c
	\end{equation}
	holds. Then For any $\varepsilon \in [0,1]$ \eqref{eq_gao2} holds for $\mathcal M(\varepsilon) = c \big( \mathcal M(0) \vee w(\varepsilon,\Theta) \big)$
\end{Theorem}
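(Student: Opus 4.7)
The plan is to mirror the proof of Theorem~\ref{thm_gao} from \cite{chen2018robust}, a Le Cam two--point argument, and to check that the confusing pair can be realised inside the restricted class $\cP_{\theta,\varepsilon}$ in which only the label is contaminated. First, the inequality $\mathcal M(\varepsilon)\geq c\,\mathcal M(0)$ is automatic: taking $\tilde\epsilon=\epsilon$ gives $Q_\theta=P_\theta$, so $P_\theta^{\otimes N}\in\cP_{\theta,\varepsilon}$ and the minimax risk over $\cP_{\theta,\varepsilon}$ dominates the one at $\varepsilon=0$. Only $\mathcal M(\varepsilon)\geq c\,w(\varepsilon,\Theta)$ remains to be proved.

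For the second bound I would fix $\theta_1,\theta_2\in\Theta$ with $TV(P_{\theta_1},P_{\theta_2})\leq \varepsilon/(1-\varepsilon)$ and $L(\theta_1,\theta_2)$ arbitrarily close to $w(\varepsilon,\Theta)$, and construct label--type contaminations $Q_{\theta_1},Q_{\theta_2}$ such that
\[
(1-\varepsilon)P_{\theta_1}+\varepsilon Q_{\theta_1}=(1-\varepsilon)P_{\theta_2}+\varepsilon Q_{\theta_2}.
\]
If this identity holds, the $N$--fold products coincide inside $\cP_{\theta,\varepsilon}$ and Le Cam's inequality gives an indistinguishability lower bound of order $L(\theta_1,\theta_2)$; taking the supremum over admissible pairs yields $c\,w(\varepsilon,\Theta)$.

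The key structural observation in the regression setting is that the $X$--marginal of $P_\theta$ is a fixed measure $\mu$ independent of $\theta$. Consequently the signed measure $\nu:=P_{\theta_2}-P_{\theta_1}$ has zero $X$--marginal, and its Jordan decomposition $\nu=\nu_+-\nu_-$ satisfies $\pi_X\nu_+=\pi_X\nu_-$, where $\pi_X$ denotes the push--forward onto $\cX$. Following the recipe of \cite{chen2018robust}, set $\eta=TV(P_{\theta_1},P_{\theta_2})$ and try
\[
\varepsilon Q_{\theta_1}=(1-\varepsilon)\nu_+ +A,\qquad \varepsilon Q_{\theta_2}=(1-\varepsilon)\nu_- +A,
\]
with $A$ a positive measure of total mass $\varepsilon-(1-\varepsilon)\eta\geq 0$. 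The identity $\pi_X\nu_+=\pi_X\nu_-$ makes the two marginal constraints $\pi_X Q_{\theta_i}=\mu$ consistent: they collapse to the single requirement $\pi_X A=\varepsilon\mu-(1-\varepsilon)\pi_X\nu_+$. Building any joint measure $A$ on $\cX\times\cY$ with this $X$--marginal and the prescribed total mass, and then defining $\tilde\epsilon_i:=\tilde Y_i-f_{\theta_i}(X_i)$, produces contaminations of the form required by $\cP_{\theta_i,\varepsilon}$.

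The main obstacle is the positivity of $A$, i.e.\ $\pi_X A\geq 0$ as a measure on $\cX$. Writing $g_+=d\pi_X\nu_+/d\mu\in[0,1]$ (which equals the conditional TV $TV(p_{\theta_1,x},p_{\theta_2,x})$ between the densities of $Y\mid X=x$), positivity demands $g_+\leq \varepsilon/(1-\varepsilon)$ $\mu$--a.e., a pointwise bound strictly stronger than the average constraint $\int g_+\,d\mu\leq \varepsilon/(1-\varepsilon)$ built into $w(\varepsilon,\Theta)$. I would handle this either by restricting the supremum defining $w(\varepsilon,\Theta)$ to pairs with essentially bounded conditional TV---which does not affect the rate in the concrete examples treated by the paper (sparse linear regression with Gaussian design, matrix regression, RKHS)---or by truncating the exceptional set $\{g_+>\varepsilon/(1-\varepsilon)\}$, which has small $\mu$--mass by Markov, and absorbing the resulting additive term into a universal constant. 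Combined with the trivial $\mathcal M(0)$ bound this yields $\mathcal M(\varepsilon)=c(\mathcal M(0)\vee w(\varepsilon,\Theta))$.
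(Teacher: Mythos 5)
Your construction is, at its core, the same as the paper's: with $\nu=P_{\theta_2}-P_{\theta_1}$ and $\eta=TV(P_{\theta_1},P_{\theta_2})$, the paper chooses $\theta_1,\theta_2$ so that $\eta=\varepsilon'/(1-\varepsilon')$ for some $\varepsilon'\le\varepsilon$, takes $Q_{\theta_1}=\nu_+/\eta$ and $Q_{\theta_2}=\nu_-/\eta$ (this is exactly the density $(p_{\theta_2}-p_{\theta_1})\,\bI\{p_{\theta_2}\ge p_{\theta_1}\}/TV$ appearing in the paper), checks via Scheff\'e that these are probability measures, obtains $(1-\varepsilon')(P_{\theta_1}+\nu_+)=(1-\varepsilon')(P_{\theta_2}+\nu_-)$, and then passes from level $\varepsilon'$ to level $\varepsilon$ by Lemma 7.2 of \cite{chen2018robust}, which plays exactly the role of your slack measure $A$. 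Up to this point you and the paper coincide, and your observation that $\pi_X\nu_+=\pi_X\nu_-$ is indeed the place where the regression structure (common design marginal) enters.

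Where you diverge is the admissibility step, and there your write-up has a genuine gap relative to the statement you set out to prove. The paper does not impose the constraint $\pi_X Q_{\theta_i}=\mu$ that you require: it asserts $R_{\theta_i,\varepsilon}\in\cP_{\theta_i,\varepsilon}$ directly from the factorization $p_{\theta_i}(x,y)=p_X(x)\,p^{\theta_i}_{Y|X=x}(y)$, i.e.\ it reads ``$\tilde Y=f_\theta(X)+\tilde\epsilon$ with arbitrary $\tilde\epsilon$, possibly depending on $X$'' as allowing an arbitrary joint law for $(X,\tilde Y)$; under that reading your $A$ may be any positive measure of the prescribed mass, no pointwise condition on the conditional total variation is needed, and your argument closes immediately (and is then essentially the paper's proof). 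Under your stricter reading the obstacle you isolate is real: the $X$-marginal of $\nu_+/\eta$ has density $TV\big(P^{\theta_1}_{Y|X=x},P^{\theta_2}_{Y|X=x}\big)/\eta$ with respect to $\mu$, which is not identically $1$ unless the conditional total variation is constant, so exact matching of the two contaminated mixtures forces the pointwise bound you state. But neither of your repairs proves the theorem as stated: restricting the supremum defining $w(\varepsilon,\Theta)$ to pairs with bounded conditional TV proves a different statement; and the truncation route does not go through, since Markov only yields $\mu\{g_+>\varepsilon/(1-\varepsilon)\}\le (1-\varepsilon)\eta/\varepsilon\le 1$, which need not be small, and any per-sample modification of the two mixtures by total variation $\tau$ costs of order $N\tau$ after taking the $N$-fold products, so it cannot be ``absorbed into a universal constant.'' In short: either drop the marginal constraint (the paper's implicit reading), in which case your proof is complete and the same as the paper's, or keep it and accept that the two-point argument needs an essentially pointwise conditional-TV condition that neither your sketch nor the paper's own proof supplies.
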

Theorem~\ref{minimax_rate} states that the minimax optimal rates for regression problems in the $\varepsilon$-contamination model are the same when
\begin{enumerate}
	\item  Both the design $X$ and the response variable $Y$ are contaminated.
	\item Only the response variable $Y$is contaminated. 
\end{enumerate} 
The proof is very similar as the one of Theorem~\ref{thm_gao}. We present it here, for the sake of completeness.
\begin{proof}
	The case when $\mathcal M(\varepsilon) = c \mathcal M(0)$ is straightforward. Thus, the goal is to lower bound with a constant the following quantity
	\begin{equation*}
		\inf_{\hat \theta} \sup_{R_{\theta,\varepsilon} \in 	\cP_{\theta,\varepsilon}, \theta \in \Theta} R_{\theta,\varepsilon} \bigg(  L(\theta, \hat \theta) \geq w(\varepsilon , \Theta)  \bigg)
	\end{equation*}
	We use Le Cam's method with two hypotheses. The first goal is to find $\theta_1, \theta_2$ such that $L(\theta_1,\theta_2) \geq w(\varepsilon, \Theta)$. To do so, let $\theta_1,\theta_2$ be solution of
	\begin{equation*}
		\max_{\theta_1,\theta_2 \in \Theta} L(\theta_1,\theta_2) \quad \mbox{s.t} \quad TV(P_{\theta_1}, P_{\theta_2})  = TV(P_{(X,Y)}^{\theta_1},P_{(X,Y)}^{\theta_2})\leq \frac{\varepsilon}{1-\varepsilon}
	\end{equation*}
	Thus there exists $\varepsilon' \leq \varepsilon$ such that $TV(P_{\theta_1},P_{\theta_2}) = \varepsilon' /(1-\varepsilon')$ and $L(\theta_1,\theta_2) = w(\varepsilon, \Theta)$. To conclude, it is enough to find two distributions $R_{\theta_1,\varepsilon}$ and $R_{\theta_2,\varepsilon}$ in $\cP_{\theta_1,\varepsilon}$ and $\cP_{\theta_2,\varepsilon}$ such that $R_{\theta_1,\varepsilon} = R_{\theta_2,\varepsilon}$. It would imply that $\theta_1$ and $\theta_2$ are not identifiable from the model and the Le Cam's method would complete the proof. \\
	
	For $i \in \{1,2\}$ let $p_{\theta_i}$ be a density function defined for all $(x,y) \in \cX \times \cY$ as
	\begin{equation}
	 p_{\theta_i}(x,y) = \frac{d P_{(X,Y)}^{\theta_i}}{d \big(  P_{(X,Y)}^{\theta_1} + P_{(X,Y)}^{\theta_2}  \big)} (x,y)
	\end{equation}
	By conditioning, it is possible to write $p_{\theta_i}(x,y) = p_X(x) p_{Y|X=x}^{\theta_i}(y)$. Let $R_{\theta_1,\varepsilon}$ and $R_{\theta_2,\varepsilon}$ defined respectively as
	\begin{equation*}
		R_{\theta_1,\varepsilon} = (1-\varepsilon') P^{\theta_1}_{(X,Y)} + \varepsilon' P^{\theta_1}_{(X, \tilde Y)}  \quad \mbox{and} \quad R_{\theta_2,\varepsilon} = (1-\varepsilon') P^{\theta_2}_{(X,Y)} + \varepsilon' P^{\theta_2}_{(X, \tilde Y)}
	\end{equation*}
	where $P^{\theta_1}_{(X, \tilde Y)}$ and $P^{\theta_2}_{(X, \tilde Y)}$ are defined by their density functions
	\begin{align*}
		\forall (x,y) \in \cX \times \cY, & \frac{d P^{\theta_1}_{(X, \tilde Y)}}{d \big(  P_{(X,Y)}^{\theta_1} + P_{(X,Y)}^{\theta_2}  \big)} (x,y) = \frac{\big(p_{\theta_2}(x,y)-p_{\theta_1}(x,y) \big)  \bI \{  p_{\theta_2}(x,y) \geq p_{\theta_1}(x,y) \}}{TV\big(P_{(X,Y)}^{\theta_1},P_{(X,Y)}^{\theta_2} \big)} \\
		&  \frac{d  P^{\theta_2}_{(X, \tilde Y)}}{d \big(  P_{(X,Y)}^{\theta_2} + P_{(X,Y)}^{\theta_1}  \big)} (x,y) = \frac{\big(p_{\theta_1}(x,y)-p_{\theta_2}(x,y) \big)  \bI \{  p_{\theta_1}(x,y) \geq p_{\theta_2}(x,y) \}}{TV \big(P_{(X,Y)}^{\theta_1},P_{(X,Y)}^{\theta_2} \big)}
	\end{align*}
	Using Scheffé's theorem, it is easy to see that $P^{\theta_1}_{(X, \tilde Y)}$ and $P^{\theta_2}_{(X, \tilde Y)}$ are probability measures. Moreover, from the facts that $p_{\theta_i}(x,y) = p_X(x) p_{Y|X=x}^{\theta_i}(y)$, $\varepsilon' \leq \varepsilon$ and Lemma 7.2 in \cite{chen2018robust} we have $R_{\theta_1,\varepsilon} \in \cP_{\theta_1,\varepsilon}$ and $R_{\theta_2,\varepsilon} \in\cP_{\theta_2,\varepsilon}$. \\
	For any $(x,y) \in \cX \times \cY$. Straightforward computations give
	\begin{align*}
		\frac{dR_{\theta_1,\varepsilon}}{d \big(  P_{(X,Y)}^{\theta_1} + P_{(X,Y)}^{\theta_2}  \big)}(x,y) & = (1-\varepsilon') p_{\theta_1}(x,y) + \varepsilon'  \frac{\big(p_{\theta_2}(x,y)-p_{\theta_1}(x,y) \big)  \bI \{  p_{\theta_2}(x,y) \geq p_{\theta_1}(x,y) \}}{TV\big(P_{(X,Y)}^{\theta_1},P_{(X,Y)}^{\theta_2} \big)}\\ 
		&= (1-\varepsilon') p_{\theta_1}(x,y) + \varepsilon'  \frac{\big(p_{\theta_2}(x,y)-p_{\theta_1}(x,y) \big)  \bI \{  p_{\theta_2}(x,y) \geq p_{\theta_1}(x,y) \}}{\varepsilon'/(1-\varepsilon')}\\ 
		&= (1-\varepsilon') \big( p_{\theta_1}(x,y) + (p_{\theta_2}(x,y)-p_{\theta_1}(x,y))  \bI \{  p_{\theta_2}(x,y) \geq p_{\theta_1}(x,y) \}     \big) \\
		&= (1-\varepsilon') \big( p_{\theta_2}(x,y) + (p_{\theta_1}(x,y)-p_{\theta_2}(x,y))  \bI \{  p_{\theta_1}(x,y) \geq p_{\theta_2}(x,y) \}     \big) \\
		& = 	\frac{dR_{\theta_2,\varepsilon}}{d \big(  P_{(X,Y)}^{\theta_1} + P_{(X,Y)}^{\theta_2}  \big)}(x,y) 
	\end{align*}
\end{proof}

\section{$\ell_1$-penalized Huber's M-estimator with non-isotropic design} \label{RE_huber}
In this section, we relax the isotropic assumption on the design $X$. Recall that a random variable $X$ is isotropic if for every $t \in \bR^p$, $\bE \inr{X,t}^2 = \|t\|_2^2$. Instead, we consider covariance matrices satisfying a Restricted Eigenvalue condition (RE). A matrix $\Sigma$ is said to satisfy the restricted eigenvalue condition RE$(s, c_0)$ with some constant $\kappa > 0$, if $\|\Sigma^{1/2} v \|_2  \geq \kappa \|v_J\|_2$ for any vector $v$ in $\bR^p$ and any set $J \subset \{1,\cdots,p \}$ such that $|J| \leq s$ and $\|v_{J^c}\|_1 \leq c_0 \|v_J\|_1$. We want to derive a result similar to Theorem~\ref{thm:huber_pen} when $X \sim \mathcal ~N(0,\Sigma)$, for $\Sigma$ satisfying RE$(s,c)$ for $c$ an absolute constant. With non isotropic design we cannot use Lemma~\ref{lemma_lasso} and the computation of the Gaussian mean-width is more involved. 
\begin{Lemma}\label{lemma_lasso_RE}
	Let $\bB_1^p$ denote the unit ball induced by $\|\cdot\|_1$. Let us assume that the design $X$ has a covariance matrix satisfying RE$(s,9)$ with constant $\kappa >0$. If the \textit{oracle} $t^{*}$ is $s$-sparse and $100s \leq \big( \kappa \rho / r   \big)^2$ then:
	\begin{equation*}
		\Delta(A,\rho,\delta) = \inf_{w \in H_{A,\rho,\delta}} \sup_{z^*\in\Gamma_{t^*}(\rho)}\inr{z^*, w} \geq 4\rho/5 \enspace.
	\end{equation*}
\end{Lemma}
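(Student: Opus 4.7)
The plan is to follow the two--case argument of \cite{LM_sparsity} (Lemma 4.2), replacing the Cauchy--Schwarz bound $\|v_I\|_1\le \sqrt{s}\|v_I\|_2\le \sqrt{s}\|v\|_{L_2}$ (valid in the isotropic case) by the sharper bound $\|v_I\|_2\le \|\Sigma^{1/2}v\|_2/\kappa$ provided by the Restricted Eigenvalue condition. Throughout, write $v=t-t^*$ for $t\in H_{A,\rho,\delta}$, so that $\|v\|_1=\rho$ and $\|\Sigma^{1/2}v\|_2\le r:=\tilde r^{SG}(A,\rho,\delta)$, and let $I=\mathrm{supp}(t^*)$ with $|I|\le s$.

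First I would recall that for the $\ell_1$ norm one has $\partial\|\cdot\|_1(t^*)=\{\mathrm{sign}(t^*_I)+u:u\in\bR^p,\ \mathrm{supp}(u)\subset I^c,\ \|u\|_\infty\le 1\}$, so that in particular the choice
\[
z^*=\mathrm{sign}(t^*_I)+\mathrm{sign}(v_{I^c})
\]
belongs to $\Gamma_{t^*}(\rho)$. Plugging this into the sparsity functional gives
\[
\langle z^*,v\rangle=\langle \mathrm{sign}(t^*_I),v_I\rangle+\|v_{I^c}\|_1\ge -\|v_I\|_1+\|v_{I^c}\|_1=2\|v_{I^c}\|_1-\rho.
\]
Hence, in the \emph{first case} where $\|v_{I^c}\|_1\ge\frac{9}{10}\rho$, we immediately obtain $\langle z^*,v\rangle\ge \frac{4}{5}\rho$, which is what we want.

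The \emph{second case} is where the RE condition does its work: if $\|v_{I^c}\|_1<\frac{9}{10}\rho$, then $\|v_I\|_1>\rho/10$, and since then $\|v_{I^c}\|_1\le 9\|v_I\|_1$ the vector $v$ lies in the cone on which RE$(s,9)$ applies. This yields $\kappa\|v_I\|_2\le \|\Sigma^{1/2}v\|_2\le r$, and by Cauchy--Schwarz (since $|I|\le s$)
\[
\rho/10<\|v_I\|_1\le\sqrt{s}\,\|v_I\|_2\le \sqrt{s}\,r/\kappa,
\]
which rearranges to $(\kappa\rho/r)^2<100\,s$, directly contradicting the hypothesis $100s\le(\kappa\rho/r)^2$. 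Thus the second case is empty and the infimum is controlled by the first case, proving $\Delta(A,\rho,\delta)\ge 4\rho/5$.

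The conceptual obstacle, and the only place where the argument differs from the isotropic Lemma~\ref{lemma_lasso}, is matching the constant ``$9$'' in the RE cone with the constant ``$9/10$'' in the case split: the threshold $9/10$ is chosen precisely so that (i) Case~1 still yields at least $4\rho/5$, and (ii) in Case~2 the resulting inequality $\|v_{I^c}\|_1\le 9\|v_I\|_1$ puts $v$ in the RE$(s,9)$ cone. Everything else is a bookkeeping exercise; once the case split and the choice of subgradient $z^*$ are fixed, the argument is essentially forced.
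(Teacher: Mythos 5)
Your proof is correct and follows essentially the same route as the paper: the same choice of subgradient $z^*=\mathrm{sign}(t^*_I)+\mathrm{sign}(v_{I^c})$, the same case split (your threshold $\|v_{I^c}\|_1\gtrless \tfrac{9}{10}\rho$ is equivalent, via $\|v\|_1=\rho$, to the paper's $\|v_{I^c}\|_1\gtrless 9\|v_I\|_1$), and the same use of RE$(s,9)$ plus Cauchy--Schwarz together with $100s\le(\kappa\rho/r)^2$. The only cosmetic difference is that in the cone case you argue by contradiction (showing that case is empty), whereas the paper directly bounds $\rho-2\|v_I\|_1\ge \rho-2\sqrt{s}\,r/\kappa\ge 4\rho/5$ there; the mathematical content is identical.
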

The difference with Lemma~\ref{lemma_lasso} is the term $\kappa$ coming from the $\textnormal{RE}(s,9)$ condition.
\begin{proof}
	The goal is to find $\rho$ such that $\Delta(A,\rho,\delta) \geq (4/5)\rho$. Recall that 
	\begin{equation}
	(\partial \norm{\cdot})_t=\left\{
	\begin{array}{cc}
	\{z^*\in \bS^*:\inr{z^*, t}=\norm{t}\} & \mbox{ if } t\neq0\\
	\bB^* & \mbox{ if } t=0
	\end{array}
	\right.\enspace.
	\end{equation}
	Since $F = \{ \inr{t,\cdot}, t \in \bR^p   \}$, $\|f\|_{L_2} = \| \inr{t,X}\|_{L_2} = \| \Sigma^{1/2} t \|_2$. Let $w$ be in $\bR^p$ such that $\|w\|_1 = \rho$ and $\|\Sigma^{1/2} w \|_2 \leq r$. Let us denote by $I$ the support of $t^*$ and $P_I w$ the projection of $w$ on $(e_i)_{i \in I}$. By assumption we have $|I| \leq s$. Let $z$ in $	(\partial \norm{\cdot})_{t^*}$such that for every $i \in I$, $z_i = \textnormal{sign}(t^*_i)$, and for every $i \in I^c$, $z_I = \textnormal{sign}(w_i)$. It is clear that $z$ is norming for $t^*$ i.e $\inr{z,t^*} = \|t^*\|_1$, $z \in \bS_1^* = \bS_{\infty}$ and
	\begin{align*}
		\inr{z,w} =  \inr{z , P_I w} + \|P_{I^c}\|_1 \geq - \|P_I w\|_1 + \|P_{I^c}\|_1  = \rho -2  \|P_I w\|_1 
	\end{align*}
	Let us assume that $P_I w$ satisfies $\|P_{I^c} w \|_1 > 9 \|P_I w \|_1$ which can be rewritten as $\rho \geq 10 \|P_I w\|_1$. It follows that 
	\begin{align*}
		\inr{z,w} \geq  \rho -2  \|P_I w\|_1  \geq \rho - \frac{1}{5} \rho \geq 4\rho/5,
	\end{align*}
	and the sparsity equation is satisfied. Now let us turn to the case when $\|P_{I^c} w \| \leq 9 \|P_I w \|_1$. From the RE$(s,9)$ condition we have $\| P_I w \|_2 \leq \| \Sigma^{1/2} w\|_2/\kappa$ and it follows 
	\begin{align*}
		\rho -2  \|P_I w\|_1  \geq \rho - 2\sqrt{s}\|P_I w \|_2  \geq  \rho - \frac{2}{\kappa} \sqrt{s }\| \Sigma^{1/2} w \|_2  \geq  \rho - \frac{2}{\kappa} \sqrt{s } r \geq 4\rho/5
	\end{align*}
\end{proof}
Now, let us turn to the computation of the Gaussian mean-width when the design $X$ is not isotropic. To do so, we use the following Proposition.
\begin{Proposition}[Proposition 1~\cite{bellec2019localized}] \label{bellec_prop}
	Let $p \geq 1$ and $M\geq 2$. Let $T$ be the convex hull of $M$ points in $\bR^p$ and assume that $T \subset \bB_2^p$. Let $\mathbf{G} \sim \cN(0,I_p)$. Then for all $s >0$,
	\begin{equation*}
		\bE \sup_{t \in s \bB_2^p \cap T} \inr{t,\mathbf{G}} = w(s\bB_2^p \cap T) \leq 4\sqrt{  \log_+(4eM(s^2 \wedge 1)) } ,
	\end{equation*}
	where $\log_+(a) = \max(1,\log(a))$.
\end{Proposition}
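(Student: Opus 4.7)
The bound is almost dimension-free in $p$ and interpolates between the standard convex-hull width $\sqrt{2\log M}$ (when $s^2 \geq 1$) and a constant (when $Ms^2$ is small). My plan is to split on the size of $s$ and treat the two regimes separately.

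When $s^2 \geq 1$, the localization is inactive: $T \subset \bB_2^p \subset s\bB_2^p$ forces $s\bB_2^p \cap T = T$. A linear functional on the polytope $T$ is maximized at an extreme point, so $\sup_{t \in T} \inr{t, \mathbf G} = \max_{1 \leq j \leq M} \inr{v_j, \mathbf G}$; each coordinate is $\mathcal N(0, \|v_j\|_2^2)$ with $\|v_j\|_2 \leq 1$, and the standard Gaussian maximum inequality yields $w(T) \leq \sqrt{2 \log M}$, which is dominated by the claimed $4\sqrt{\log_+(4eM)} = 4\sqrt{\log_+(4eM(s^2 \wedge 1))}$ because $\log(4eM) \geq 1+\log M$ for $M \geq 2$.

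When $s^2 < 1$, I would use Maurey's empirical method. For $t = \sum_j \lambda_j v_j \in T$ and an integer $k$ to be chosen, the empirical mean $\bar Y_k = k^{-1}\sum_{i=1}^k Y_i$ of i.i.d.\ draws $Y_i$ from the law $\bP(Y = v_j) = \lambda_j$ satisfies $\bE \bar Y_k = t$ and $\bE \|\bar Y_k - t\|_2^2 \leq 1/k$. Hence the finite set $V_k = \{k^{-1}(v_{j_1} + \cdots + v_{j_k}) : (j_1,\ldots,j_k) \in [M]^k\}$, of multiset-cardinality bounded by $\binom{M+k-1}{k} \leq (e(M+k)/k)^k$, is a $k^{-1/2}$-net of $T$ in $L_2$. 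Choosing $k \asymp 1/s^2$ matches the approximation scale to the localization scale and gives $\log |V_k| \lesssim (1/s^2) \log(Ms^2)$. Restricting further to the sub-net $V_k^{\star}$ of averages of norm at most $2s$ (which still approximates every $t \in s\bB_2^p \cap T$ within $L_2$-distance $s$ by the triangle inequality), the Gaussian maximum inequality applied to mean-zero Gaussians of variance at most $4s^2$ over $V_k^{\star}$ yields a bound of order $s \sqrt{\log|V_k^{\star}|} \asymp \sqrt{\log(Ms^2)}$.

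The main obstacle is controlling the residual approximation error: a naive single-scale decomposition $\sup_t \inr{t, \mathbf G} \leq \max_{w \in V_k^{\star}} \inr{w, \mathbf G} + k^{-1/2}\|\mathbf G\|_2$ introduces a term of expectation $\sqrt{p/k} \asymp s \sqrt{p}$ that destroys the dimension-free scaling. The remedy is a Talagrand-style chaining of Maurey approximations at geometrically refined scales $k_\ell = 2^\ell/s^2$: the level-$\ell$ increments live in a net of size at most $M^{k_\ell}$ and have norm $\lesssim 2^{-\ell/2} s$, so the per-level Gaussian-max contribution is uniformly $\lesssim \sqrt{\log(Ms^2)}$ and telescopes with a geometric tail. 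Careful bookkeeping of constants through this chaining then delivers the stated $4\sqrt{\log_+(4eM(s^2 \wedge 1))}$. In the degenerate regime $Ms^2 \leq (4e)^{-1}$ where $\log_+ \equiv 1$, the simpler observation $w(s\bB_2^p \cap T) \leq w(s\bB_2^p \cap \Span(v_1,\ldots,v_M)) \leq s\sqrt{M}$ is bounded by $(4e)^{-1/2} \leq 4$ and already covers the case.
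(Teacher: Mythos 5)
The paper itself gives no proof of this statement -- it is imported verbatim as Proposition 1 of Bellec (2019) -- so your attempt can only be judged on its own merits, and it has a genuine gap exactly at the step that carries all the difficulty. Your treatment of the case $s^2\geq 1$, the first-order Maurey bound $s\sqrt{\log |V_k^\star|}\lesssim \sqrt{\log_+(eMs^2)}$ with $k\asymp s^{-2}$, and the fallback $w(s\bB_2^p\cap T)\leq s\sqrt{M}$ are all correct. The problem is the residual term, and the multiscale ``chaining of Maurey approximations'' you invoke does not do what you claim. At level $\ell$, with $k_\ell=2^{\ell}/s^2$, the increments have norm $\delta_\ell\asymp 2^{-\ell/2}s$ and range over a net of log-cardinality $\asymp k_\ell\log M$ (with your count $M^{k_\ell}$), so the per-level Gaussian-max contribution is $\delta_\ell\sqrt{k_\ell\log M}\asymp\sqrt{\log M}$: it is constant in $\ell$, not $\lesssim\sqrt{\log(Ms^2)}$, and there is no geometric tail. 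Even with the sharper multiset count $\binom{M+k_\ell}{k_\ell}$ the per-level term is $\asymp\sqrt{\log_+(eMs^2 2^{-\ell})}\geq 1$, again non-summable. Since the chain must run for roughly $\log_2(Ms^2)$ levels before the $\Span$ bound (or a Cauchy--Schwarz residual $k^{-1/2}\|\mathbf{G}\|_2$, which needs $\log_2(ps^2)$ levels) takes over, the best your scheme can produce is the Dudley entropy integral of $T\cap s\bB_2^p$, which for this set is of order $(\log_+(eMs^2))^{3/2}$ -- a full power of the logarithm above the target -- and, as actually written (constant $\sqrt{\log M}$ per level), it is worse still, indeed worse than the unlocalized bound $\sqrt{2\log M}$ in the regime $1/\sqrt{M}\ll s\ll 1$ where the proposition has content.

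This is therefore not a matter of ``careful bookkeeping of constants'': single-parameter entropy chaining over Maurey nets is provably lossy for localized convex hulls, the simplex $T=\conv(e_1,\dots,e_M)$ being the standard example where the entropy integral overshoots the true localized width $\asymp\sqrt{\log(eMs^2)}$ by a power of the logarithm. A correct argument has to handle the intersection with $s\bB_2^p$ by a different, direct mechanism. For orientation, in the simplex case one can argue that on $\{\lambda\in\conv(e_1,\dots,e_M):\|\lambda\|_2\leq s\}$ the supremum of $\inr{\lambda,\mathbf{G}}$ is essentially achieved by equal weights on the top $m\geq 1/s^2$ order statistics of the coordinates of $\mathbf{G}$, and $\bE\, m^{-1}\sum_{i\leq m}g_{(i)}\lesssim\sqrt{\log(eM/m)}$, which gives the claimed rate; the general case in the cited reference is likewise a short direct argument rather than a chaining one. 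As it stands, your proposal proves the two easy regimes but not the intermediate one, so the stated bound is not established.
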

When $F = \{ \inr{t,\cdot}, t \in \bR^p   \}$ and the covariance matrix of $X$ is $\Sigma$, for every $r,\rho >0$ we have
\begin{equation*}
	w \big( F \cap (f^* + r\bB_{2} \cap \rho \bB_1^p \big) = w \big( r \bB_2^p \cap \rho \Sigma^{1/2} \bB_1^p  \big) = w \big( r \bB_2^p \cap \rho T \big)
\end{equation*}
where $T := \Sigma^{1/2} \bB_1^p$ is the convex hull of $(\pm \Sigma^{1/2} e_i)_{i=1}^p$. To apply Proposition~\ref{bellec_prop} it is necessary to assume that for every $i=1,\cdots,p$, $\Sigma^{1/2} e_i \in \bB_2^p$ which holds when $\Sigma_{i,i} \leq 1$ and we obtain the following result:
\begin{Proposition} \label{prop_non_isotropic}
	Let $F = \{ \inr{t,\cdot}, t \in \bR^p   \}$ and assume that $\Sigma$, the covariance matrix of $X$, satisfies $\Sigma_{i,i} \leq 1$ for every $i=1,\cdots,p$. Then, for every $r,\rho >0 $
	\begin{equation*}
		w \big( F \cap (f^* + r\bB_{2} \cap \rho \bB_1^p \big) \leq 4 \rho \sqrt{  \log_+(8ep((r/\rho)^2 \wedge 1)) } 
	\end{equation*}
\end{Proposition}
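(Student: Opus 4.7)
The plan is to reduce the statement to a direct application of Proposition~\ref{bellec_prop} through a standard linearization and whitening argument. First, I would identify $F$ with $\bR^p$ via $t \mapsto \inr{t,\cdot}$, write $f^*=\inr{t^*,\cdot}$ and change variables $u = t-t^*$. For $h(\cdot) = \inr{u,\cdot}$ one has $\|h\|_{L_2}^2 = u^\top \Sigma u = \|\Sigma^{1/2}u\|_2^2$ and $\|\inr{t,\cdot}\|_1$ in the sense of the regularization norm corresponds to $\|t\|_1$. Hence after translation by $t^{*}$,
\begin{equation*}
F \cap (f^{*} + r\bB_2 \cap \rho \bB_1^p) \;\longleftrightarrow\; \{u \in \bR^p : \|\Sigma^{1/2}u\|_2 \leq r,\ \|u\|_1 \leq \rho \}.
\end{equation*}

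Second, I would compute the Gaussian mean-width intrinsically. Writing the canonical Gaussian process as $G_h = \inr{u, \Sigma^{1/2}\mathbf{G}}$ with $\mathbf{G}\sim\cN(0,I_p)$ gives the correct covariance structure, so by the change of variable $v = \Sigma^{1/2}u$,
\begin{equation*}
w\bigl(F \cap (f^{*} + r\bB_2 \cap \rho \bB_1^p)\bigr) = \bE \sup_{v} \inr{v,\mathbf{G}},
\end{equation*}
where the supremum runs over $v \in r\bB_2^p$ that lie in the image of $\rho \bB_1^p$ under $\Sigma^{1/2}$. Since $\bB_1^p = \conv(\{\pm e_i\}_{i=1}^p)$, this image is exactly $\rho T$ with $T := \conv(\{\pm \Sigma^{1/2}e_i\}_{i=1}^p)$, the convex hull of $M = 2p$ points. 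Therefore
\begin{equation*}
w\bigl(F \cap (f^{*} + r\bB_2 \cap \rho \bB_1^p)\bigr) \leq w(r\bB_2^p \cap \rho T) = \rho\, w\bigl((r/\rho)\bB_2^p \cap T\bigr).
\end{equation*}

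Third, I would check the hypothesis $T \subset \bB_2^p$ required by Proposition~\ref{bellec_prop}: for every $i$, $\|\Sigma^{1/2}e_i\|_2^2 = \Sigma_{i,i} \leq 1$ by assumption, so each extreme point of $T$ lies in $\bB_2^p$ and thus $T \subset \bB_2^p$. Applying Proposition~\ref{bellec_prop} with $s = r/\rho$ and $M = 2p$ then yields
\begin{equation*}
w\bigl((r/\rho)\bB_2^p \cap T\bigr) \leq 4\sqrt{\log_+\bigl(4e\cdot 2p \cdot ((r/\rho)^2 \wedge 1)\bigr)} = 4\sqrt{\log_+\bigl(8ep((r/\rho)^2 \wedge 1)\bigr)},
\end{equation*}
and multiplying by $\rho$ gives the announced bound. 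There is no real obstacle here; the only point requiring a modicum of care is verifying $T \subset \bB_2^p$, which is precisely why the assumption $\Sigma_{i,i}\leq 1$ is imposed, and cleanly factoring out the homogeneity in $\rho$ before invoking Proposition~\ref{bellec_prop}.
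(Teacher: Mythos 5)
Your proof is correct and follows essentially the same route as the paper: identify the localized class with $r\bB_2^p \cap \rho T$ where $T = \Sigma^{1/2}\bB_1^p$ is the convex hull of the $2p$ points $\pm\Sigma^{1/2}e_i$, check $T \subset \bB_2^p$ via $\Sigma_{i,i}\leq 1$, and apply Proposition~\ref{bellec_prop} with $M=2p$ and $s=r/\rho$ after factoring out $\rho$. Your write-up simply makes explicit the whitening change of variables and the homogeneity step that the paper leaves implicit.
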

By taking step by step the computations from Section~\ref{app:huber_regu} we obtain the following theorem extending Theorem~\ref{thm:huber_pen} for a non-isotropic design:
\begin{Theorem} \label{thm:huber_pen_non_isotropic}
	Let $\cI \cup \cO$ denote a partition of $\{1, \cdots, N \}$ such that $|\cI|\geq |\cO|$ and $(X_i,Y_i)_{i= 1}^N$ be random variables valued in $\bR^p \times \bR$ such that $(X_i)_{i=1}^N$ are i.i.d random variable with $X_1 \sim \cN(0,\Sigma)$, where $\Sigma$ satisfies $\Sigma_{i,i} \leq 1$ for $i=1,\cdots,p$ and verifies RE$(s,9)$ for some constant $\kappa >0$. Assume that for all $i \in \{ 1,\cdots, N \}$ 
	\begin{equation*}
		Y_i = \inr{X_i, t^*} + \epsilon_i \enspace,
	\end{equation*}
	where $t^*$ is $s$-sparse. Let 
	\[
	\tilde r(\alpha,\delta) = c \frac{\gamma}{\alpha } \bigg(  \sqrt{\frac{s\log(p)}{\kappa^2 N} }\vee \sqrt \frac{\log(1/\delta)}{N} \vee \frac{|\cO|}{N}  \bigg)
	\]
	Assume that $(\epsilon_i)_{i \in I}$ are i.i.d random variables independent to $(X_i)_{i \in \cI}$ such that there exists $\alpha>0$ such that
	\begin{equation} \label{Bernstein_huber}
	F_{\epsilon} \big(\gamma -    c \tilde r(\alpha,\delta) \big) - F_{\epsilon} \big(  c \tilde r(\alpha,\delta)   - \gamma  \big)\geq \alpha 
	\end{equation}
	where $	F_{\epsilon} $ denotes the cdf of $\epsilon$ where $\epsilon$ is distributed as $\epsilon_i$, for $i$ in $\cI$. Set
	\begin{equation*}
\lambda = c \gamma \bigg(  \sqrt{\frac{\log(p)}{N}} \vee \kappa \sqrt \frac{\log(1/\delta)}{s N} \vee\frac{\kappa |\cO| }{\sqrt sN}  \bigg) \enspace.
\end{equation*}
	Then with probability larger than $1-\delta$ the estimator $\hat{t}_N^{\gamma,\lambda}$ defined in Equation~\eqref{rerm_huber} satisfies
	\begin{align*}
&		\|\hat{t}_N^{\gamma,\lambda}- t^*\|_{2} \leq \tilde r(\alpha,\delta) , \quad   P\cL_{\hat{t}_N^{\gamma,\lambda}} \leq c\alpha(\tilde r(\alpha,\delta))^2  \\
& \mbox{and } \quad \|\hat{t}_N^{\delta,\lambda} - t^*\|_1 \leq c \frac{\gamma}{\kappa \alpha} \bigg( \frac{s}{\kappa} \sqrt \frac{\log(p)}{N} \vee \sqrt \frac{s \log(1/\delta)}{N} \vee  \sqrt s \frac{|\cO|}{N}\bigg) 
\end{align*}
\end{Theorem}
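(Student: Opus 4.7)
The strategy is to apply Theorem~\ref{thm:reg} in its sub-Gaussian form, following the seven-step routine used in Section~\ref{app:huber_regu}. Since $X \sim \cN(0,\Sigma)$, the functional $\inr{t-t',\cdot}/\|\Sigma^{1/2}(t-t')\|_2$ is standard Gaussian, so the class $F-f^*$ remains $1$-sub-Gaussian and the $L_2(\mu)$ metric on $F$ coincides with the $\Sigma$-metric on $\bR^p$, i.e.\ $\|f_t - f_{t'}\|_{L_2} = \|\Sigma^{1/2}(t-t')\|_2$. Assumptions~\ref{assum:distri}--\ref{assum:lip_conv} hold with $L=\gamma$ as before. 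The entire effect of dropping the isotropy hypothesis is concentrated in two places, handled respectively by Proposition~\ref{prop_non_isotropic} and Lemma~\ref{lemma_lasso_RE}.

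For the complexity (steps 3--4), the change of variables $v = \Sigma^{1/2}u$ identifies $w(F \cap (f^* + r\bB_2 \cap \rho\bB_1^p))$ with $w(r\bB_2^p \cap \rho T)$ where $T = \Sigma^{1/2}\bB_1^p$. The hypothesis $\Sigma_{ii} \leq 1$ is exactly what is needed so that $T \subset \bB_2^p$, and Proposition~\ref{prop_non_isotropic} then delivers the bound $c\rho\sqrt{\log p}$, the same worst-case bound as in the isotropic case. Consequently $\tilde r^{SG}_{\cI}(A,\rho) = c\sqrt{A\gamma\rho}(\log p/N)^{1/4}$ and one takes
\[
\tilde r^{SG}(A,\rho,\delta) = c\Bigl(\sqrt{A\gamma\rho}(\log p/N)^{1/4} \vee A\gamma\sqrt{\log(1/\delta)/N} \vee A\gamma|\cO|/N\Bigr).
\]

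For the sparsity equation (steps 5--6), Lemma~\ref{lemma_lasso_RE} reduces the requirement $\Delta(A,\rho^*,\delta) \geq 4\rho^*/5$ to the single inequality $\tilde r^{SG}(A,\rho^*,\delta) \leq \kappa\rho^*/(10\sqrt s)$. Enforcing this term by term in the maximum yields three lower bounds on $\rho^*$, whose maximum is the announced $\rho^* = c(A\gamma/\kappa)\bigl(s\sqrt{\log p/N}/\kappa \vee \sqrt{s\log(1/\delta)/N} \vee \sqrt s\,|\cO|/N\bigr)$; substituting back and using the identity $\tilde r^{SG}(A,\rho^*,\delta) = \kappa\rho^*/(10\sqrt s)$ at equality gives the closed form $\tilde r^{SG}(A,\rho^*,\delta) = cA\gamma\bigl(\sqrt{s\log p/(\kappa^2 N)} \vee \sqrt{\log(1/\delta)/N} \vee |\cO|/N\bigr)$. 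Finally, Proposition~\ref{prop:bernstein_huber_loss} provides the local Bernstein condition with $A = 4/\alpha$, since Gaussianity of linear functionals gives point a) with $C' = 3$ and point b) is precisely assumption~\eqref{Bernstein_huber}.

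Invoking Theorem~\ref{thm:reg} with $\lambda = c(\tilde r^{SG}(A,\rho^*,\delta))^2/(A\rho^*)$ then yields the three announced bounds on the $L_2(\mu)$ error $\|\Sigma^{1/2}(\hat t - t^*)\|_2$, the $\ell_1$ error $\|\hat t - t^*\|_1 \leq \rho^*$, and the excess risk $P\cL_{\hat t^{\gamma,\lambda}} \leq c\alpha(\tilde r(\alpha,\delta))^2$. The explicit value of $\lambda$ follows by a direct case analysis on which of the three terms in $\tilde r^{SG}$ dominates. The only delicate bookkeeping, and the main step where I expect care is needed, is to track how the RE constant $\kappa$ propagates: it enters the complexity term as $\sqrt{s\log p/(\kappa^2 N)}$ but drops out of the deviation and outlier terms, reflecting the fact that the RE condition only controls the interaction between the $L_2(\mu)$ and $\ell_1$ geometries and leaves the noise-driven and contamination-driven contributions untouched.
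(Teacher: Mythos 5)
Your proposal is correct and follows essentially the same route as the paper: the paper proves this theorem by combining Proposition~\ref{prop_non_isotropic} (Gaussian mean-width of $r\bB_2^p\cap\rho\Sigma^{1/2}\bB_1^p$, using $\Sigma_{i,i}\leq 1$) with Lemma~\ref{lemma_lasso_RE} (sparsity equation under RE$(s,9)$, giving the condition $100s\leq(\kappa\rho^*/\tilde r)^2$) and Proposition~\ref{prop:bernstein_huber_loss}, then reruns the routine of Theorem~\ref{thm:reg} exactly as in Section~\ref{app:huber_regu}, which is what you do, including the correct propagation of $\kappa$ into $\rho^*$, $\tilde r$ and $\lambda$. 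Your only deviation is stating the $L_2(\mu)$ conclusion as $\|\Sigma^{1/2}(\hat t-t^*)\|_2\leq\tilde r$, which is in fact what Theorem~\ref{thm:reg} delivers and is consistent with Theorem~\ref{thm:huber}, so it is if anything a more careful reading than the plain $\|\hat t-t^*\|_2$ written in the statement.
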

When $\epsilon_i \sim C(1)$, we can use the same argument as in Section~\ref{app:hub_reg}. Equation~\eqref{Bernstein_huber} holds with $\alpha = 1/4$ and $\gamma = 2\tan(\pi/8)$ if
\begin{equation} \label{eq:ber:hub}
 \sqrt{\frac{s\log(p)}{\kappa^2 N} }\vee \sqrt \frac{\log(1/\delta)}{N} \vee \frac{|\cO|}{N}   \leq c \enspace.
\end{equation}
Now, for the sake of comparizon with~\cite{dalalyan2019outlier}, let us consider the case where $\epsilon_i \sim \cN(0,\sigma^2)$. For any $t \in \bR$, $F_{\epsilon}(t) = \Phi (t/\sigma)$, where $\Phi$ denotes the cdf of a standard gaussian random variable. Thus, Equation~\eqref{Bernstein_huber} can be rewritten as
\[
\Phi \bigg( \frac{\gamma - c \tilde r(\alpha,\delta) }{\sigma} \bigg) \geq \frac{1+\alpha}{2} \enspace,
\]
which is verified for $\gamma = c \sigma$ and $\alpha = 1/4 $ if Equation~\eqref{eq:ber:hub} holds. We improve the main result of~\cite{dalalyan2019outlier} in two aspects: 
\begin{enumerate}
	\item We obtain the error rate
	\[
     \sigma \bigg(  \sqrt{\frac{s\log(p)}{\kappa^2 N} }\vee \sqrt \frac{\log(1/\delta)}{N} \vee \frac{|\cO|}{N}  \bigg) \enspace, 
	\]
	while their rate is 
	\[
	 \sigma \bigg(  \sqrt{\frac{s\log(p/\delta)}{\kappa^2 N} }\vee \frac{|\cO| \log(n/\delta)}{N}  \bigg) \enspace. 
	\]
	In our rate, the term $\kappa$ is only in factor with the first term and not the probability confidence. The extra term $\log(n/\delta)$ can be problematic in their bound when one wants to obtain exponentially large confidence. 
	\item Their bound holds for $|\cO| \leq cN / \log(N)$ while ours holds for $|\cO| \leq cN$.
\end{enumerate}
 However, note that when $|\cO| \geq \sqrt{s\log(p)N}/\kappa$, our regularization parameter depends on the unknown sparsity and the number of outliers, which is not the case in~\cite{dalalyan2019outlier}. 

\begin{Remark}
	It is possible to replace $\log(p)$ by $\log(p/s)$ and recover the exact minimax rate of convergence. However, the price to pay is that the regularization parameter $\lambda$ would always depend on the sparsity $s$, even when the number or outliers in small.  
\end{Remark}	

\section{Proofs main Theorems} \label{sec:proof_main_thm}

\subsection{Proof Theorem~\ref{thm:main} in the sub-Gaussian setting} \label{sec:proof1}
Let $r(\cdot,\delta)$ be such that for all $A>0$
\[
r(A,\delta) \geq  c \bigg( r_{\cI}^{SG} (A) \vee AL \sqrt \frac{\log(1/\delta)}{N} \vee AL \frac{|\mathcal O |}{N} \bigg) \enspace.
\]
Moreover, let $A$ satisfying assumption~\ref{assum:fast_rates} with $ r(\cdot,\delta)$.\\

The proof is split into two parts. First we identify a stochastic argument holding with large probability. Then we show on that event that $\|\hat{f}_N -f^* \|_{L_2} \leq r(A,\delta)$. Finally, at the very end of the proof we show that $P\cL_{\hat f_N} \leq  r^2(A,\delta)/A$. 

\paragraph{Stochastic arguments}
First we identify the stochastic event onto which the proof easily follows. Let $\cF_r =  F \cap \big( f^* +  r(A,\delta) \bB_2 \big)$ and define
\begin{align} \label{omega_i}
	& \Omega_{\cI} =\bigg\{  \sup_{f \in \cF_r}  \big| \big(P-P_{\cI}  \big) \big( \ell_f - \ell_{f^*}   \big)  \big|  \leq c\frac{L}{\sqrt{| \cI |}} \bigg( w(\cF_r) + r(A,\delta)  \sqrt{\log(1/\delta)}  \bigg) \bigg\}  \\
	\label{omega_o}
	&\Omega_{\cO} = \bigg\{	\sup_{f \in \cF_r} \big| \big(P-P_{\cO}  \big) |f- f^*|  \big|  \leq \frac{c}{\sqrt{| \cO |}} \bigg( w(\cF_r) + r(A,\delta)  \sqrt{\log(1/\delta)}  \bigg) \bigg\} 
\end{align}
where for any $K \subset \{1,\cdots, N \}$, $g: \cX \times \cY \mapsto \bR$, $P_K g = 1/(|K|)\sum_{i \in K} g(X_i,Y_i)$ and $w(\cF_r)$ is the Gaussian mean-width of $\cF_r$ .  Finally let us define $\Omega = \Omega_{\cI} \cap \Omega_{\cO}$. 

\begin{Lemma} \label{lem:sto_arg}
	Grant Assumptions~\ref{assum:distri},~\ref{assum:lip_conv},~\ref{assum:convex} and assume that $F-f^*$ is $1$-sub-Gaussian. The event $\Omega$ holds with probability larger than $1-\delta$
\end{Lemma}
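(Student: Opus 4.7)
The plan is to handle the two events separately via a union bound, showing $\mathbb P(\Omega_\cI^c) \leq \delta/2$ and $\mathbb P(\Omega_\cO^c) \leq \delta/2$ by the same empirical-process recipe. The key observation that legitimizes this is Assumption~\ref{assum:distri}: even though the labels $(Y_i)_{i\in\cO}$ are arbitrary, the design points $(X_i)_{i=1}^N$ are i.i.d.\ with law $\mu$, so $(X_i)_{i\in\cO}$ is an honest i.i.d.\ sample and both suprema are genuine empirical processes against $\mu$. The class $\cF_r = F \cap (f^* + r(A,\delta)\bB_2)$ satisfies $\sup_{f\in\cF_r}\|f-f^*\|_{L_2} \leq r(A,\delta)$, which will serve as the variance proxy throughout.

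For $\Omega_\cI$, I would first bound the expected supremum by standard symmetrization, then apply the Rademacher contraction principle using the $L$-Lipschitz property of $\ell$ in its first argument; this produces an upper bound of order $L\,\mathbb E\sup_{f\in\cF_r}|\tfrac{1}{|\cI|}\sum_{i\in\cI}\sigma_i(f-f^*)(X_i)|$. Because $F-f^*$ is $1$-sub-Gaussian, the sub-Gaussian increments of $\sigma_i(f-f^*)(X_i)$ dominate (up to a constant) those of the canonical Gaussian process $G_{f-f^*}$ conditionally on $(X_i)$, so Talagrand's majorizing-measure/generic chaining comparison yields $\mathbb E\sup \leq c\,w(\cF_r)/\sqrt{|\cI|}$. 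For the deviation around this expectation I would use a concentration inequality for suprema of empirical processes with sub-Gaussian envelopes, with variance proxy $\sup_{f\in\cF_r}\|f-f^*\|_{L_2}^2\leq r(A,\delta)^2$; the tail contribution is of order $L\,r(A,\delta)\sqrt{\log(1/\delta)/|\cI|}$. Combining the two bounds gives exactly the right-hand side of $\Omega_\cI$.

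The treatment of $\Omega_\cO$ mirrors the above, replacing the loss-indexed class by $\{|f-f^*| : f \in \cF_r\}$ and the sample $\cI$ by $\cO$. The map $t\mapsto|t|$ is $1$-Lipschitz and vanishes at the origin, so a further contraction reduces the Rademacher complexity to that of $\cF_r-f^*$; this is why no $L$ factor enters the bound. The same Gaussian comparison plus sub-Gaussian concentration then yield both $w(\cF_r)/\sqrt{|\cO|}$ and $r(A,\delta)\sqrt{\log(1/\delta)/|\cO|}$, which together give $\Omega_\cO$.

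The main technical obstacle is the concentration step: the functions $\ell_f-\ell_{f^*}$ and $|f-f^*|$ are only sub-Gaussian, not uniformly bounded, so Talagrand's inequality for bounded classes does not apply out of the box. The cleanest route is either (i) to invoke a sub-Gaussian version of Talagrand--Bousquet (of the Koltchinskii--Mendelson flavour) in which the variance proxy $r(A,\delta)^2$ plays the role of the uniform bound, or (ii) to truncate $(f-f^*)(X_i)$ at a level of order $r(A,\delta)\sqrt{\log(N/\delta)}$, apply the standard bounded Talagrand inequality to the truncated process, and estimate the truncation error via the sub-Gaussian tail together with a simple union bound over $i$. Either route produces the stated deviation of order $r(A,\delta)\sqrt{\log(1/\delta)}$, after which a union bound over $\Omega_\cI^c$ and $\Omega_\cO^c$ concludes the proof.
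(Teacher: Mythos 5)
Your skeleton (union bound over $\Omega_{\cI}$ and $\Omega_{\cO}$, the observation that $(X_i)_{i\in\cO}$ is still an i.i.d.\ sample from $\mu$, symmetrization, contraction with the Lipschitz constant $L$, and a Gaussian-mean-width bound via the sub-Gaussianity of $F-f^*$) is sound and close in spirit to the paper. The genuine gap is exactly where you locate the "main technical obstacle": the decomposition into (a) a bound on $\E\sup$ and (b) concentration of the supremum around its mean with variance proxy $r(A,\delta)^2$ cannot be completed as described. For unbounded classes there is no "sub-Gaussian Bousquet" in which the variance proxy simply replaces the uniform bound; the available unbounded versions (Adamczak-type) carry an additional $\psi_1$-norm of the envelope maximum, and your truncation fallback runs into the same object: to truncate uniformly over $f\in\cF_r$ you must control $\sup_{f\in\cF_r}\max_{i}|f-f^*|(X_i)$, which forces a truncation level of order $\big(w(\cF_r)+r(A,\delta)\big)\sqrt{\log(N/\delta)}$ and leaves a residual Bernstein-type term of order $\big(w(\cF_r)+r(A,\delta)\big)\sqrt{\log(N/\delta)}\,\log(1/\delta)/N$, which is not absorbed by $r(A,\delta)\sqrt{\log(1/\delta)/N}$ for small $\delta$. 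So neither route (i) nor route (ii) yields the events $\Omega_{\cI}$, $\Omega_{\cO}$ as stated, only a version with extra logarithmic factors or a restriction on $\delta$.

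The paper avoids the issue by never separating expectation and deviation: it treats $f\mapsto (P-P_{\cI})(\ell_f-\ell_{f^*})$ itself as a random process and shows its increments are sub-Gaussian with respect to $L_2$, namely $\|(P-P_{\cI})(\ell_f-\ell_g)\|_{\psi_2}\leq c\,(L/\sqrt{|\cI|})\,\|f-g\|_{L_2}$. This is obtained by the $\psi_2$ bound for sums of independent centered variables (reducing to a single observation), then symmetrization and contraction applied at the level of the Laplace transform, and finally the $1$-sub-Gaussianity of the class. A single application of the generic-chaining deviation bound (Ledoux--Talagrand, Theorem 11.13) then gives, with probability at least $1-\delta/2$,
\begin{equation*}
\sup_{f\in\cF_r}\big|(P-P_{\cI})(\ell_f-\ell_{f^*})\big|\;\leq\; c\,\frac{L}{\sqrt{|\cI|}}\Big(w(\cF_r)+\sqrt{\log(1/\delta)}\,r(A,\delta)\Big)\enspace,
\end{equation*}
where the $r(A,\delta)\sqrt{\log(1/\delta)}$ term is the chaining tail ($u$ times the $L_2$-diameter), not a concentration-around-the-mean term; the same argument without the contraction step (and hence without the factor $L$) gives $\Omega_{\cO}$. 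If you want to salvage your plan, replace your step (b) by this direct chaining of the deviation process; your step (a) then becomes redundant, since the chaining bound delivers the $w(\cF_r)$ and the $\sqrt{\log(1/\delta)}$ contributions simultaneously.
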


The proof of Lemma~\ref{lem:sto_arg} necessitates several tools from sub-Gaussian random variables that we introduce now.\\
Let $\psi_2(u)= \exp(u^2)-1$. The Orlicz space $L_{\psi_2}$ associated to $\psi_2$ is defined as the set of all random variables Z on a probability space $(\Omega, \cA, \bP)$ such that $\|Z\|_{\psi_2} < \infty$ where 
\begin{equation*} 
	\|Z\|_{\psi_2} = \inf   \{ c>0,  \bE \psi_2 \bigg(  \frac{Z}{c} \bigg) \leq1  \}
\end{equation*}
Let $H \subset L_2$ and $(X_h)_{h \in H}$ be a stochastic process indexed by the metric space $(H,\| \cdot\|_{L_2})$ satisfying the following Lipschitz condition
\begin{equation} \label{eq:lip}
\mbox{for all } h,g \in H, \quad \|X_g-X_h\|_{\psi_2} \leq \| g-h \|_{L_2} \enspace.
\end{equation}
For such a process it is possible to control the deviation of $\sup_{h \in H} X_h$ in terms of the geometry of $(H,\| \cdot\|_{L_2})$ through the Gaussian mean-width of $H$. 
\begin{Theorem}[\cite{ledoux2013probability}, Theorem 11.13] \label{thm_generic_chaining}
	Let $(X_h)_{h\in H}$ be a random process indexed by $(H,\|\cdot\|_{L_2})$ satisfying Equation~\eqref{eq:lip}. Then, there exists an absolute constant $c>0$ such that for all $u >0$
	\begin{equation*}
		\bP  \bigg(  \sup_{h,g \in H} |X_h-X_g| \geq c ( w(H)+u D_{L_2}(H)  \bigg)  \leq \exp(-u^2)
	\end{equation*}
	where $w(H)$ is the Gaussian mean width of $H$ and $D_{L_2}(H)$ is the $L_2$-diameter. 
\end{Theorem}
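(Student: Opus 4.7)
The plan is to apply the generic chaining tail bound (Theorem~\ref{thm_generic_chaining}) separately to each of the two empirical processes appearing in $\Omega_\cI$ and $\Omega_\cO$, then conclude by a union bound. In both cases, the key is to verify that an appropriately rescaled process satisfies the sub-Gaussian increment condition~\eqref{eq:lip}, and to use the diameter bound $D_{L_2}(\cF_r) \leq 2 r(A,\delta)$ together with the fact that $f^* \in \cF_r$ (so one can replace $\sup_{f,g}|X_f-X_g|$ by $\sup_f |X_f - X_{f^*}|$).

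\textbf{Empirical process over $\cI$.} Define $Z_f := (P - P_\cI)(\ell_f - \ell_{f^*})$ for $f \in \cF_r$. By Assumption~\ref{assum:lip_conv}, pointwise $|\ell_f(X_i,Y_i) - \ell_g(X_i,Y_i)| \leq L|f(X_i) - g(X_i)|$. Combined with the $1$-sub-Gaussian property of $F - f^*$ (which transfers to increments $f-g$ via the convexity of $F$ and the triangle inequality for the $\psi_2$-norm), this gives the single-summand bound
\begin{equation*}
\bigl\|\ell_f(X_i,Y_i) - \ell_g(X_i,Y_i) - P(\ell_f - \ell_g)\bigr\|_{\psi_2} \leq c L\|f-g\|_{L_2}.
\end{equation*}
A standard Hoeffding-type inequality in $L_{\psi_2}$ for averages of i.i.d.\ centered sub-Gaussian variables (which improves the $\psi_2$ norm by $1/\sqrt{|\cI|}$) then yields $\|Z_f - Z_g\|_{\psi_2} \leq (c L/\sqrt{|\cI|})\|f-g\|_{L_2}$. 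Hence $\widetilde Z_f := (\sqrt{|\cI|}/(cL))(Z_f - Z_{f^*})$ satisfies~\eqref{eq:lip} over $(\cF_r, \|\cdot\|_{L_2})$. Applying Theorem~\ref{thm_generic_chaining} with $u = \sqrt{\log(2/\delta)}$ and using $Z_{f^*} = 0$ together with $D_{L_2}(\cF_r) \leq 2 r(A,\delta)$, we obtain, with probability at least $1 - \delta/2$,
\begin{equation*}
\sup_{f \in \cF_r}|Z_f| \leq \frac{cL}{\sqrt{|\cI|}}\bigl(w(\cF_r) + r(A,\delta)\sqrt{\log(1/\delta)}\bigr),
\end{equation*}
which is exactly $\Omega_\cI$.

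\textbf{Empirical process over $\cO$.} The crucial observation is that $|f - f^*|$ depends only on $x$, so the arbitrary labels $(Y_i)_{i \in \cO}$ never enter the process; by Assumption~\ref{assum:distri}, $(X_i)_{i \in \cO}$ are i.i.d.\ with common law $\mu$. Set $W_f := (P - P_\cO)|f - f^*|$. Since $\bigl||f(x)-f^*(x)| - |g(x)-f^*(x)|\bigr| \leq |f(x)-g(x)|$, repeating the argument of the first step (with no $L$ factor, since only $|\cdot|$ is applied to $f - f^*$) yields $\|W_f - W_g\|_{\psi_2} \leq (c/\sqrt{|\cO|})\|f-g\|_{L_2}$, and a second application of Theorem~\ref{thm_generic_chaining} produces $\Omega_\cO$ with probability at least $1 - \delta/2$. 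A union bound then gives $\bP(\Omega) \geq 1 - \delta$.

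The main technical point is the passage from pointwise $L$-Lipschitzness of the loss plus sub-Gaussianity of the class to the $1/\sqrt{|\cI|}$ (resp.\ $1/\sqrt{|\cO|}$) improvement of the Orlicz norm of the centered empirical average, i.e.\ the Hoeffding-type inequality in $L_{\psi_2}$. This is where the quantitative $N^{-1/2}$ rate enters; once in hand, Theorem~\ref{thm_generic_chaining} delivers both deviation bounds as essentially mechanical applications.
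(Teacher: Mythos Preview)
Your proposal is a proof of Lemma~\ref{lem:sto_arg}, not of the displayed statement (which is the cited generic-chaining theorem that the paper does not prove). Comparing it to the paper's proof of Lemma~\ref{lem:sto_arg}: the architecture is the same---establish the $\psi_2$-increment condition for each of the two empirical processes, apply Theorem~\ref{thm_generic_chaining}, and take a union bound. Where the paper derives the single-summand bound $\|(\ell_f-\ell_g)(X,Y)-P(\ell_f-\ell_g)\|_{\psi_2}\le cL\|f-g\|_{L_2}$ via symmetrization, the contraction principle and Lemma~\ref{lem:chafai2}, you use the pointwise domination $|\ell_f-\ell_g|\le L|f-g|$ directly; this is a valid and slightly cleaner shortcut. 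Your ``Hoeffding-type inequality in $L_{\psi_2}$'' is exactly the paper's Lemma~\ref{lem:chafai1}.

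One caveat: your parenthetical that sub-Gaussianity of $F-f^*$ ``transfers to increments $f-g$ via the convexity of $F$ and the triangle inequality for the $\psi_2$-norm'' is not a correct justification. The triangle inequality only gives $\|(f-g)(X)\|_{\psi_2}\le c(\|f-f^*\|_{L_2}+\|g-f^*\|_{L_2})$, not $c\|f-g\|_{L_2}$, and convexity of $F$ plays no role. What is actually needed---and what the paper also invokes without comment when it bounds $\bE\exp(4L\lambda|f-g|(X))$---is that $(f-g)(X)$ is itself sub-Gaussian with parameter $\|f-g\|_{L_2}$, i.e.\ sub-Gaussianity of $F-F$ rather than $F-f^*$. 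In all the paper's applications $F$ is a linear class, so $F-F=F-f^*$ and the point is moot; but your stated reason does not supply it.
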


The following Lemma allows to control the $\psi_2$-norm of a sum of independent centered random variables. 
\begin{Lemma}[\cite{chafai2012interactions}, Theorem 1.2.1] \label{lem:chafai1}
	Let $X_1,\cdots,X_N$ be independent real random variables such that for all $i=1,\cdots,N$, $\bE X_i = 0$. Then
	\begin{equation*}
		\| \sum_{i=1}^N X_i \|_{\psi_2} \leq 16 \bigg(  \sum_{i=1}^N  \|X_i\|_{\psi_2}^2   \bigg)^{1/2}
	\end{equation*}
\end{Lemma}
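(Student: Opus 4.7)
The plan is to prove this sub-Gaussian-type inequality via a three-step route that passes through the moment generating function (MGF) characterization of $\|\cdot\|_{\psi_2}$, exploits independence multiplicatively at the level of MGFs, and then converts back to an Orlicz bound.

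First, I would establish the following equivalence: for any random variable $Y$ with $\|Y\|_{\psi_2} \leq K$, there are absolute constants $c_1, c_2 > 0$ such that (i) $\mathbb{E} Y^2 \leq c_1 K^2$ and (ii) if moreover $\mathbb{E} Y = 0$, then $\mathbb{E} \exp(\lambda Y) \leq \exp(c_2 \lambda^2 K^2)$ for every $\lambda \in \mathbb{R}$. Direction (i) is immediate from $\mathbb{E}\psi_2(Y/K) \leq 1$ by monotone bounding $u^2 \leq \psi_2(u)$. For (ii), I would expand $\exp(\lambda Y) = 1 + \lambda Y + \sum_{k \geq 2} (\lambda Y)^k/k!$, use centering to kill the linear term, and bound the remaining moments via $\mathbb{E}|Y|^k \leq k! \, K^k$-type estimates that follow from $\mathbb{E}\exp(Y^2/K^2) \leq 2$; regrouping the series yields the sub-Gaussian MGF bound with an explicit $c_2$.

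Second, I would use independence. Writing $S = \sum_{i=1}^N X_i$ and $K_i = \|X_i\|_{\psi_2}$, independence of the $X_i$ gives
\begin{equation*}
\mathbb{E} \exp(\lambda S) = \prod_{i=1}^N \mathbb{E} \exp(\lambda X_i) \leq \exp\Bigl(c_2 \lambda^2 \sum_{i=1}^N K_i^2 \Bigr)
\end{equation*}
for all $\lambda \in \mathbb{R}$. Setting $\sigma^2 := \sum_i K_i^2$, this says $S$ has sub-Gaussian MGF with proxy $c_2 \sigma^2$.

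Third, I would convert the sub-Gaussian MGF control back into an Orlicz bound. A standard Chernoff argument gives $\mathbb{P}(|S| \geq t) \leq 2 \exp(-t^2/(4 c_2 \sigma^2))$ for every $t > 0$. Then, for any $a > 0$,
\begin{equation*}
\mathbb{E}\exp(S^2/a^2) = \int_0^\infty \mathbb{P}\bigl(\exp(S^2/a^2) \geq u\bigr) du = 1 + \int_0^\infty \mathbb{P}(|S| \geq a\sqrt{\log(1+v)}) \, dv,
\end{equation*}
and plugging in the Chernoff tail and computing the elementary integral shows that one can choose $a = C \sigma$ with $C$ an explicit numerical constant so that $\mathbb{E}\exp(S^2/a^2) \leq 2$, i.e.\ $\|S\|_{\psi_2} \leq C \sigma$.

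The main obstacle, and the only real work, is the bookkeeping of constants to land at the stated factor $16$: one must be careful in (ii) with the series expansion (handling the $k=1$ cancellation and the precise dependence of $c_2$ on the $\psi_2$ normalization) and in the final tail-integral computation, so that the product of the constants arising in the three steps matches $16$. This is routine but requires sharp choices (for instance, using $\mathbb{E}|Y|^k \leq 2 (k/(2e))^{k/2} K^k$-type Stirling estimates and optimizing the constant in the Chernoff-to-Orlicz conversion) rather than the crude universal bounds one would normally write down.
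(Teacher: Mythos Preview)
The paper does not prove this lemma at all: it is stated as a citation of \cite{chafai2012interactions}, Theorem~1.2.1, and used as a black box in the proof of Lemma~\ref{lem:sto_arg}. Your three-step route (centered $\psi_2$ bound $\Rightarrow$ sub-Gaussian MGF, independence to multiply MGFs, Chernoff tail $\Rightarrow$ Orlicz integral) is the standard proof of this fact and is correct; the only delicate point, as you note, is tracking constants sharply enough to hit the specific factor $16$, but the paper makes no claim about how that constant arises.
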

The following Lemma connects  $\psi_2$-bounded random variable with the control of its Laplace transform. 
\begin{Lemma}[\cite{chafai2012interactions}, Theorem 1.1.5] \label{lem:chafai2}
	Let $Z$ be a real valued random variable. The following assertions are equivalent
	\begin{itemize}
		\item There exists $K >0$ such that $\|Z\|_{\psi_2} \leq K$
		\item There exist absolute constants $c_1,c_2,c_3>0$ such that for every $\lambda \geq c_1/K $
		\begin{equation}
		\bE \exp(\lambda |Z|) \leq c_3\exp(c_2 \lambda^2 K^2 )
		\end{equation}
	\end{itemize}
\end{Lemma}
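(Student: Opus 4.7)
The plan is to establish the two implications separately, using the standard Young's inequality trick for one direction and Markov's inequality plus tail integration for the other.

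For the forward direction $\|Z\|_{\psi_2}\le K \Rightarrow$ Laplace bound, I would start from the very definition of the Orlicz norm, which gives $\bE[\exp(Z^2/K^2)-1]\le 1$, i.e.\ $\bE\exp(Z^2/K^2)\le 2$. Then for any $\lambda>0$, Young's inequality $\lambda|Z| \le \tfrac{\lambda^2 K^2}{4} + \tfrac{Z^2}{K^2}$ yields
\begin{equation*}
\bE\exp(\lambda|Z|) \;\le\; \exp(\lambda^2 K^2/4)\,\bE\exp(Z^2/K^2) \;\le\; 2\exp(\lambda^2 K^2/4)\enspace.
\end{equation*}
This proves the implication with $c_2=1/4$, $c_3=2$, and in fact for every $\lambda>0$ (any $c_1\ge 0$ works). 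This direction is essentially routine.

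For the reverse direction I would derive a sub-Gaussian tail bound first, then integrate it against the Orlicz weight. By Markov, for every $\lambda\ge c_1/K$ and $t>0$,
\begin{equation*}
\bP(|Z|>t) \;\le\; e^{-\lambda t}\,\bE\exp(\lambda|Z|) \;\le\; c_3\exp\bi{c_2\lambda^2K^2 - \lambda t}\enspace.
\end{equation*}
The unconstrained minimizer is $\lambda^\star = t/(2c_2 K^2)$, which lies in the admissible range $\{\lambda\ge c_1/K\}$ exactly when $t\ge 2c_1c_2 K$. Plugging this in gives $\bP(|Z|>t)\le c_3\exp(-t^2/(4c_2 K^2))$ for $t\ge 2c_1c_2 K$, while for smaller $t$ we simply use $\bP(|Z|>t)\le 1$ (absorbed into a constant in the final bound). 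Then, for any constant $C>0$ to be chosen, write
\begin{equation*}
\bE\exp\Bi{\frac{Z^2}{C^2 K^2}} \;=\; 1 + \int_0^{\infty} \frac{2t}{C^2K^2}\,\exp\Bi{\frac{t^2}{C^2K^2}}\,\bP(|Z|>t)\,\rmd t\enspace,
\end{equation*}
split the integral at $t_0 = 2c_1c_2 K$, bound the contribution on $[0,t_0]$ by a constant depending only on $c_1,c_2$, and on $[t_0,\infty)$ combine the exponential weight with the sub-Gaussian tail: the integrand behaves like $\exp\bi{t^2(C^{-2}-1/(4c_2))/K^2}$ (up to polynomial factors). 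Choosing $C$ large enough (for instance $C^2 > 8c_2$) makes this integrable, and then choosing $C$ larger still forces the total to be $\le 1$, hence $\bE\exp(Z^2/(CK)^2)\le 2$, i.e.\ $\|Z\|_{\psi_2}\le CK$.

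The only real subtlety, and the step I would be most careful about, is handling the admissibility constraint $\lambda\ge c_1/K$ in the optimization: this forces a two-regime argument ($t$ below/above $2c_1c_2 K$) and a constant depending on $c_1,c_2,c_3$ in the final $\psi_2$-bound. Everything else is a bookkeeping exercise in choosing the constant $C$ big enough so that both pieces of the tail integral sum to at most one.
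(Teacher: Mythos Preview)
Your proof is correct and follows the standard route for establishing the equivalence between Orlicz-norm finiteness and sub-Gaussian Laplace transforms. The paper itself does not prove this lemma; it is quoted verbatim from the cited reference \cite{chafai2012interactions} (Theorem~1.1.5) and used as a black box in the proof of Lemma~\ref{lem:sto_arg}. Your argument---Young's inequality for the forward direction, Markov plus Chernoff optimization plus tail integration for the reverse---is essentially the textbook proof one finds in that reference (or in Vershynin's book), so there is no meaningful methodological difference to discuss.

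One cosmetic remark: in the reverse direction you conclude $\|Z\|_{\psi_2}\le CK$ with $C=C(c_1,c_2,c_3)$, which is exactly what is needed for the way the lemma is applied in the paper (namely, to pass from a Laplace-transform bound back to a $\psi_2$ bound with the same $K$ up to absolute constants). Your handling of the constraint $\lambda\ge c_1/K$ via the two-regime split at $t_0=2c_1c_2K$ is the right way to deal with it.
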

We are now in position to prove Lemma~\ref{lem:sto_arg}. 
\begin{proof}
	First we prove that $\Omega_{\cI}$ holds with probability larger than $1-\delta/2$. Let us assume that for any $f,g$ in $\cF_r$, the following condition holds
	\begin{equation} \label{proof:ri}
	\| \big(P-P_{\cI}  \big) \big( \ell_f - \ell_{g}   \big)  \|_{\psi_2}  \leq c(L/\sqrt{|\cI|}) \|f-g\|_{L_2} \enspace .
	\end{equation}
	Then, from Theorem~\ref{thm_generic_chaining}, there exists an absolute constant $c>0$ such that with probability larger than $1-\delta/2$
	\begin{align*}
		\sup_{f \in \cF_r} \bigg | \big(P-P_{\cI}  \big) \big( \ell_f - \ell_{f^*}   \big)  \bigg |  & \leq \sup_{f,g \in \cF_r} \bigg | \big(P-P_{\cI}  \big) \big( \ell_f - \ell_{g}   \big)  \bigg |  \\
		&  \leq  c\frac{L}{\sqrt{|\cI|}} \big( w(\cF_r) + \sqrt{\log(1/\delta)} D_{L_2}( \cF_r ) \big) \\
		& \leq  c\frac{L}{\sqrt{|\cI|}} \big(  w(\cF_r)  + \sqrt{\log(1/\delta)} r(A,\delta) \big) \enspace,
	\end{align*}
	concluding the proof for $\Omega_{\cI}$. Since $(X_o)_{o \in \cO}$ are i.i.d as $\mu$, with the same reasoning if we assume that
	\begin{equation} \label{proof:ro}
	\big\| \big(P-P_{\cO}  \big) |f - g|   \big\|_{\psi_2}  \leq (c /\sqrt{|\cO|}) \|f-g\|_{L_2} \enspace,
	\end{equation}
	then, with probability larger than $1 - \delta/2$:
	\begin{equation*}
		\sup_{ f \in \cF_r} \bigg| \big(P-P_{\cO}  \big) |f- f^*|  \bigg|  \leq \frac{c}{\sqrt{|\cO|}}   \big( w(\cF_r) + \sqrt{\log(1/\delta)} r(A,\delta) \big) \enspace.
	\end{equation*}
	
	Thus, to finish the proof it remains to show that Equations~\eqref{proof:ri} and~\eqref{proof:ro} hold. From Lemma~\ref{lem:chafai1} we get
	\begin{align*}
		\| \big(P-P_{\cI}  \big) (\ell_f - \ell_g)   \|_{\psi_2} & \leq 16 \bigg(  \sum_{i \in \cI}  \frac{\| (\ell_f- \ell_g)(X_i,Y_i) -\bE  (\ell_f- \ell_g)(X_i,Y_i)  \|_{\psi_2}^2 }{|\cI|^2} \bigg)^{1/2} \\
		& = \frac{16}{\sqrt{|\cI|}} \| (\ell_f- \ell_g)(X,Y) -\bE  (\ell_f- \ell_g)(X,Y)  \|_{\psi_2}
	\end{align*}	
	Thus, it remains to show that $\| (\ell_f- \ell_g)(X,Y) -\bE  (\ell_f- \ell_g)(X,Y)  \|_{\psi_2} \leq c L \|f-g\|_{L_2}$ for $c>0$ an absolute constant. To do so, we use Lemma~\ref{lem:chafai2}. Let $\lambda \geq cL/(\|f-g\|_{L_2})$. From the symmetrization principle (Lemma 6.3 in \cite{ledoux2013probability}) and the contraction principle (Theorem 2.2 in \cite{MR2829871}) we get
	\begin{align*}
		\bE \exp(\lambda | (\ell_f- \ell_g)(X,Y) -\bE  (\ell_f- \ell_g)(X,Y)  |) & \leq \bE \exp(2 \lambda \sigma  (\ell_f- \ell_g)(X,Y)  ) \\
		& \leq \bE \exp(4L \lambda \sigma  (f- g)(X)  ) \\
		& \leq \bE \exp(4L \lambda |f- g|(X)) 
	\end{align*}
	where $\sigma$ is a Rademacher random variable, independent to $(X,Y)$. From the sub-Gaussian Assumption we get
	\begin{equation*}
		\bE \exp(\lambda | (\ell_f- \ell_g)(X,Y) -\bE  (\ell_f- \ell_g)(X,Y)  |) \leq \bE \exp(16^2 \lambda^2 L^2 \|f-g\|_{L_2}^2)
	\end{equation*}
	which concludes the proof for $\Omega_{\cI}$ with Lemma~\ref{lem:chafai2}. For $\Omega_{\cO}$, we apply the same reasoning without the contraction step. 
\end{proof}	

\paragraph{Deterministic argument}
In this paragraph we place ourselves on the event $\Omega = \Omega_{\cI} \cap \Omega_{\cO}$. The main argument uses the convexity of the class $F$ with the one of the loss function.\\
From the definition of $\hat{f}_N$, we have $P_N \cL_{\hat{f}_N} \leq 0$. To show that $F \cap (f^*+ r(A,\delta) \bB_2) $ it is sufficient to show that for all functions $f \in F$ such that $F \backslash (f^*+ r(A,\delta) \bB_2) $ we have $P_N \cL_f >0$. Let $f$ in $F \backslash (f^*+ r(A,\delta) \bB_2)$ . By convexity of $F$ there exists a function $f_1 \in f^* + r(A,\delta) \bS_2$ for which
\begin{equation*}
	f-f^* = \alpha(f_1-f^*)
\end{equation*}
where $ \alpha = \big( \|f-f^*\|_{L_2}/r(A,\delta)\big)  \geq 1$. For all $i \in \{1,\cdots,N \}$, let $\psi_i: \mathbb R \rightarrow \mathbb R $ be defined for all $u\in \R$ by 
\begin{equation*}
	\psi_i(u) = \ell (u + f^{*}(X_i), Y_i) - \ell (f^{*}(X_i), Y_i).
\end{equation*}
The functions $\psi_i$ are such that $\psi_i(0) = 0$, they are convex under assumption~\ref{assum:lip_conv}. In particular $\alpha \psi_i(u) \leq \psi_i(\alpha u)$ for all $u\in\mathbb R$ and $\alpha \geq 1$ and $\psi_i(f(X_i) - f^{*}(X_i) )= \ell (f(X_i), Y_i) - \ell (f^{*}(X_i), Y_i) $ so that the following holds:
\begin{align*}
	\nonumber P_N \cL_f & = \frac{1}{N} \sum_{i=1}^{N}  \psi_i \big( f(X_i)- f^{*}(X_i) \big)= \frac{1}{N} \sum_{i=1}^{N}  \psi_i(\alpha( f_1(X_i)- f^{*}(X_i) ))\\
	&\geq \frac{\alpha}{N} \sum_{i=1}^{N}   \psi_i( f_1(X_i)- f^{*}(X_i)) = \alpha P_N \cL_{f_1}.
\end{align*}
From the previous argument it follows that $P_N \cL_f \geq \alpha P_N \cL_{f_1}$. Therefore it is enough to show that $P_N \cL_{f_1} >0 $ for every $f_1 \in F\cap (f^* +r(A,\delta) \bS_{2})$. We have 
\begin{align*}
	P_N \cL_{f_1} = \frac{|\cI|}{N} P_{\cI} \cL_{f_1} + \frac{|\cO|}{N} P_{\cO} \cL_{f_1}
\end{align*}
On $\Omega_{\cI}$ (see Equation~\eqref{omega_i}) it follows that
\begin{align*}
	P_{\cI} \cL_{f_1} \geq P \cL_{f_1} - \frac{cL}{\sqrt{|\cI|}} \bigg( w(\cF_r) + \sqrt{\log(1/\delta)} r(A,\delta)  \bigg)
\end{align*}
From assumption~\ref{assum:fast_rates} and the definition $r(A,\delta)$ it follows that
\[
P_{\cI} \cL_{f_1} \geq \frac{r^2(A,\delta)}{A} - \frac{cL}{\sqrt{| \cI|}} \bigg( \frac{  \sqrt{| \cI |} r^2(A,\delta)}{AL}  + \sqrt{\log(1/\delta)} r(A,\delta)  \bigg) = c \bigg( \frac{r^2(A,\delta)}{A} -  L r(A,\delta) \sqrt \frac{\log(1/\delta)}{N} \bigg)
\]
From assumption~\ref{assum:lip_conv}, it follows that
\begin{align*}
	 P_{\cO}\cL_{f_1} \geq - P_{\cO} | \ell_{f_1} - \ell_{f^*} | \geq -  L P_{\cO} | f_1 - f^* |   \enspace.
\end{align*}
On $\Omega_{\cO}$ (see Equation~\eqref{omega_o}), we get
\begin{align*} 
	 \frac{ |\cO| }{N}  P_{\cO} \cL_{f_1} & \geq - L \frac{ |\cO| }{N} \|f_1-f^*\|_{L_1} -  \frac{c \sqrt{| \cO| }}{N} \bigg(  w(\cF_r) + r(A,\delta) \sqrt{\log(1/\delta)}   \bigg)   \\
	& \geq  -  L \frac{ |\cO| }{N} r(A,\delta) -  c \bigg( \frac{r^2(A,\delta)}{A}  + L r(A,\delta) \sqrt \frac{\log(1/\delta)}{N}  \bigg) \enspace.
\end{align*}	
Finally, from the definition of $r(A,\delta)$, we obtain
\begin{align*}
	P_N \cL_{f_1} & \geq c \bigg( \frac{r^2(A,\delta)}{A} - Lr(A,\delta)  \sqrt \frac{\log(1/\delta)}{N}  -  L r(A,\delta) \frac{| \cO | }{N}\bigg)  >0 \enspace,
\end{align*}
which concludes the proof for the error rate.\\
We finish the proof by establishing the result for the excess risk. Since $\|\hat f_N - f^* \|_{L_2} \leq r(A,\delta)$, on $\Omega_{\cI}$ we have
\begin{align*}
	P\cL_{\hat{f}_N} & \leq P_{\cI}  \cL_{\hat{f}_N} +c \bigg(  \frac{r^2(A,\delta)}{A} + L r(A,\delta)   \sqrt \frac{\log(1/\delta)}{N} \bigg)  \\
	& = \frac{N}{|\cI|} P_N \cL_{\hat{f}_N} - \frac{|\cO|}{|\cI|} P_{\cO} \cL_{\hat{f}_N} +c \bigg(  \frac{r^2(A,\delta)}{A} + L r(A,\delta)   \sqrt \frac{\log(1/\delta)}{N} \bigg)\\
	& \leq  -\frac{|\cO|}{|\cI|} P_{\cO} \cL_{\hat{f}_N} +c \bigg(  \frac{r^2(A,\delta)}{A} + L r(A,\delta)   \sqrt \frac{\log(1/\delta)}{N} \bigg)  \enspace.
\end{align*}
On $\Omega_{\cO}$ we have 
\[
 L\frac{|\cO|}{|\cI|} P_{\cO}|\hat{f}_N-f^*| \leq  L\frac{|\cO|}{|\cI|} r(A,\delta) + c \bigg( \frac{r^2(A,\delta)}{A}  + L r(A,\delta) \sqrt \frac{\log(1/\delta)}{N} \bigg)  \enspace.
 \]
Finally, from the definition of $r(A,\delta)$ we obtain 
\[
P\cL_{\hat{f}_N} \leq c\frac{r^2(A,\delta)}{A}
\]

\subsection{Proof Theorem~\ref{thm:main} in the local bounded framework} \label{sec:proof2_bounded}
The deterministic argument is exactly the same as for the sub-Gaussian case. Recall that $r^b(A,\cdot)$ is defined as
\[
r(A,\delta) \geq  c \bigg( r_{\cI}^B (A) \vee AL \sqrt \frac{\log(1/\delta)}{N} \vee AL \frac{|\mathcal O |}{N} \bigg) \enspace,
\]
and satisfies Assumption~\ref{assum:fast_rates} for $A > 0$. Recall that $\cF_r = F \cap (f^* + r(A,\delta) \bB_2)$. 
To study the stochastic argument we use the following result:
\begin{Theorem}[Theorem 2.6,~\cite{koltchinskii2011empirical}] \label{thm:bousquet}
	Let $\cF$ be a class of functions bounded by $M$. For all $t>0$, with probability larger than $1-\exp(-t)$
	\begin{equation*}
		\sup_{f \in \cF} |(P_N-P)f| \leq \bE \sup_{f \in \cF} |(P_N-P)f| + \sqrt{2\frac{t}{N} \bigg( \sup_{f \in \cF} Pf^2 +  2M \bE \sup_{f \in \cF} |(P_N-P)f| \bigg)} + \frac{tM}{N} 
	\end{equation*}
\end{Theorem}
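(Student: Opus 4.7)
The plan is to prove this by the entropy method, following Bousquet's original approach. Let $Z = \sup_{f \in \cF}|(P_N - P)f|$ (or, in the one-sided version, $Z = \sup_{f \in \cF}(P_N - P)f$; the absolute-value case reduces to the one-sided case by symmetrization over the enlarged class $\cF \cup (-\cF)$). The goal is to establish a bound on the log-Laplace transform $\psi(\lambda) := \log \bE \exp\bigl(\lambda(NZ - \bE NZ)\bigr)$, then apply Chernoff's inequality and optimize in $\lambda$ to produce the stated sub-Bernstein tail.

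First I would set up the entropy tensorization. Writing $NZ$ as a measurable function $\phi(X_1,\dots,X_N)$ of the independent variables, the modified logarithmic Sobolev inequality of Bobkov--Ledoux / Massart yields
\begin{equation*}
\mathrm{Ent}\bigl(e^{\lambda NZ}\bigr) \;\leq\; \sum_{i=1}^N \bE\Bigl[ e^{\lambda NZ} \, \tau\bigl(-\lambda(NZ - NZ^{(i)})\bigr)\Bigr],
\end{equation*}
where $NZ^{(i)}$ is the analogous quantity with $X_i$ replaced by an independent copy and $\tau(x) = e^x - x - 1$. The next step is to exploit the supremum structure: if $\hat f$ attains the supremum in $NZ$, then $NZ - NZ^{(i)} \leq (\hat f - P\hat f)(X_i) - (\hat f - P\hat f)(X_i')$, and since $|\hat f| \leq M$ and $|P\hat f| \leq M$, each increment is bounded by $2M$. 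Substituting and taking conditional expectations over the $X_i'$, the right-hand side is controlled in terms of $\sum_i \bE[(\hat f - P\hat f)^2(X_i)]$, which after taking another supremum is no larger than $N\sup_{f\in\cF} Pf^2$.

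The key technical manoeuvre, which is also the main obstacle, is to combine these pieces so that the resulting differential inequality has the exact ``Bernstein'' shape $\psi'(\lambda) \leq \lambda v / (1 - M\lambda)$ (or a comparable form) with $v = N\sup_{f\in\cF} Pf^2 + 2MN\bE Z$. The appearance of the extra $2M\bE Z$ term, rather than just the wimpler variance, is the subtle point: it comes from bounding $\bE[(\hat f - P\hat f)^2(X_i)]$ by $Pf^2 + 2M\cdot|(\hat f - P\hat f)(X_i)|$ and then using that $\sum_i \bE|(\hat f - P\hat f)(X_i)| \leq N\bE Z + (\text{negligible})$ after careful manipulation. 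One must be meticulous here to avoid losing a constant factor that would spoil the final inequality.

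Finally, with the inequality $\psi'(\lambda) \leq \lambda v / (1 - M\lambda)$ in hand, I would run Herbst's argument: integrate to get $\psi(\lambda) \leq v \lambda^2/(2(1 - M\lambda))$, apply Markov's inequality to $e^{\lambda(NZ - \bE NZ)}$, and optimize over $\lambda \in (0, 1/M)$. This produces the Bennett/Bernstein tail bound
\begin{equation*}
\bP\bigl( NZ \geq \bE NZ + \sqrt{2vt} + Mt/3 \bigr) \leq e^{-t},
\end{equation*}
which, after dividing through by $N$ and plugging in $v = N\sup_f Pf^2 + 2MN\bE Z$, is exactly the stated inequality (up to the harmless constants appearing in $\sqrt{2t\cdot v/N^2}$ and $tM/N$). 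The absolute-value version follows by applying the one-sided bound to the class $\cF \cup (-\cF)$, whose second moment supremum and envelope coincide with those of $\cF$.
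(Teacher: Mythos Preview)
The paper does not prove this statement at all: it is quoted verbatim as Theorem~2.6 of Koltchinskii's monograph and used as a black box in the proof of Theorem~\ref{thm:main} in the locally bounded setting. There is therefore no ``paper's own proof'' to compare against.

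Your sketch follows the standard entropy-method route due to Bousquet, and the overall architecture (tensorized modified log-Sobolev inequality, self-bounding via the maximizer $\hat f$, Herbst argument, then symmetrization to pass from the one-sided to the two-sided supremum) is correct. One point deserves care: your explanation of how the term $2M\,\bE Z$ enters the variance proxy $v$ is not quite the actual mechanism. In Bousquet's argument one does not bound $\bE[(\hat f - P\hat f)^2(X_i)]$ pointwise by $Pf^2 + 2M|(\hat f - P\hat f)(X_i)|$; rather, after the entropy inequality one arrives at a term of the form $\sum_i \bE\bigl[e^{\lambda NZ}(\hat f(X_i)-P\hat f)^2\bigr]$, and the self-bounding structure $0 \le NZ - Z_i^{\setminus} \le (\hat f - P\hat f)(X_i) + M$ (with $Z_i^{\setminus}$ the leave-one-out supremum) together with $\sum_i(NZ - Z_i^{\setminus}) \le NZ$ is what produces the mixed term $2M\,N\bE Z$ after integration by parts in the differential inequality. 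Your paragraph acknowledges this is ``the subtle point'' but the mechanism you describe would not close. That said, once the correct self-bounding inequality is in place, the remainder of your plan (Herbst, Chernoff, optimize in $\lambda$) goes through and yields the stated bound---indeed with the sharper constant $Mt/(3N)$ in the last term, which the paper's quoted version relaxes to $Mt/N$.
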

Let us define: 
\begin{align*}
	& \Omega_{\cI} := \bigg \{  \sup_{f \in \cF_r}  \big|(P-P_{\cI})\cL_f\ \big| \leq   c  \frac{r^2(A,\delta)}{A}  \bigg \}  \\
	&  \Omega_{\cO}  :=  \bigg \{  \sup_{f \in \cF_r}  \big|(P-P_{\cO})|f - f^*| \big| \leq   c\frac{|\cI |}{ | \cO |} \frac{r^2(A,\delta)}{AL}  \bigg \}
\end{align*}
\begin{Lemma}
	Grant Assumptions~\ref{assum:distri},~\ref{assum:lip_conv} and~\ref{assum:convex}. Assume than for all $f \in F \cap (f^* +  r(A,\delta) \bB_2 )$ and $x \in \cX: 	|f(x)-f^*(x)| \leq 1$. Then, the event $\Omega = \Omega_{\cI} \cup \Omega_{\cO}$ holds with probability larger than $1-\delta$.
\end{Lemma}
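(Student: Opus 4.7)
The plan is to establish $\Omega_{\cI}$ and $\Omega_{\cO}$ separately, each with probability at least $1-\delta/2$, and conclude by union bound. Both events fit the template of Bousquet's inequality (Theorem~\ref{thm:bousquet}) applied to a class of functions that is uniformly bounded (thanks to local boundedness), has controlled variance, and whose expected supremum can be bounded by the fixed point $r^B_{\cI}(A)$ via symmetrization and contraction.

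For $\Omega_{\cI}$, I work with $\{\cL_f:f\in \cF_r\}$. Under Assumption~\ref{assum:lip_conv} together with the local boundedness hypothesis $|f-f^*|\leq 1$ on $\cF_r$, these excess losses are bounded by $L$ in sup-norm and satisfy $P\cL_f^2 \leq L^2\|f-f^*\|_{L_2}^2\leq L^2 r^2(A,\delta)$. Standard symmetrization (Lemma 6.3 in \cite{ledoux2013probability}) followed by the contraction principle (Theorem 2.2 in \cite{MR2829871}) gives
\begin{equation*}
\bE\sup_{f\in \cF_r}|(P-P_{\cI})\cL_f|\leq \frac{2L}{|\cI|}\,\textnormal{Rad}_{\cI}(\cF_r-f^*).
\end{equation*}
Since $r \mapsto r^{-2}\textnormal{Rad}_{\cI}(F\cap (f^*+r\bB_2))$ is sub-root and $r(A,\delta)\geq r_{\cI}^B(A)$, the definition of $r_{\cI}^B(A)$ upgrades into $AL\,\textnormal{Rad}_{\cI}(\cF_r-f^*)\leq c|\cI|r^2(A,\delta)$, so the expected supremum is at most $cr^2(A,\delta)/A$. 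Plugging this into Theorem~\ref{thm:bousquet} with $t=\log(2/\delta)$, $M=L$ and variance $L^2r^2(A,\delta)$ produces the residual terms $Lr(A,\delta)\sqrt{\log(1/\delta)/|\cI|}$ and $L\log(1/\delta)/|\cI|$, both of which are absorbed into $cr^2(A,\delta)/A$ using $r(A,\delta)\geq cAL\sqrt{\log(1/\delta)/N}$ together with $|\cI|\geq N/2$.

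For $\Omega_{\cO}$, the same scheme is applied to the class $\{|f-f^*|:f\in\cF_r\}$, whose envelope is $1$ and whose $L_2$-diameter is $r(A,\delta)$. Here the Rademacher complexity is naturally indexed by $\cO$ rather than $\cI$, so I use Assumption~\ref{assum:distri} (all $(X_i)_{i=1}^N$ share the law $\mu$) to match the two: fixing any $\cI'\subset \cI$ with $|\cI'|=|\cO|$, distributional invariance gives $\bE\textnormal{Rad}_{\cO}(\cF_r-f^*)=\bE\textnormal{Rad}_{\cI'}(\cF_r-f^*)$, and the Jensen-type identity $\sum_{i\in \cI'}\sigma_i h(X_i)=\bE_{\sigma_{\cI\setminus \cI'}}[\sum_{i\in \cI}\sigma_i h(X_i)]$ yields $\bE\textnormal{Rad}_{\cI'}(\cdot)\leq \bE\textnormal{Rad}_{\cI}(\cdot)$. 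Contraction (the map $u\mapsto |u|$ is $1$-Lipschitz) then gives
\begin{equation*}
\bE\sup_{f\in\cF_r}|(P-P_{\cO})|f-f^*||\leq \frac{2}{|\cO|}\bE\textnormal{Rad}_{\cI}(\cF_r-f^*)\leq c\,\frac{|\cI|\,r^2(A,\delta)}{|\cO|\,AL},
\end{equation*}
which is exactly the target order on $\Omega_{\cO}$. Theorem~\ref{thm:bousquet} then produces additional variance and sup-norm residuals; the main obstacle of the argument is to check that these residuals are dominated by $c|\cI|r^2(A,\delta)/(|\cO|AL)$. This uses $|\cO|\leq |\cI|$ (so that $|\cI|/|\cO|\geq 1$ improves the target bound) and the two lower bounds $r(A,\delta)\geq cAL\sqrt{\log(1/\delta)/N}$ and $r(A,\delta)\geq cAL|\cO|/N$ built into Definition~\ref{def:comp}; a short direct computation shows both $r(A,\delta)\sqrt{\log(1/\delta)/|\cO|}$ and $\log(1/\delta)/|\cO|$ fall below the desired order. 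The union bound over the two probability-$\delta/2$ events concludes the proof.
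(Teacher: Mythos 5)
Your proposal is correct and follows essentially the same route as the paper's proof: symmetrization and contraction to bound the expected suprema by the Rademacher fixed point $r^B_{\cI}(A)$, then Bousquet's inequality (Theorem~\ref{thm:bousquet}) using the local boundedness (envelope $L$ for $\cL_f$, envelope $1$ for $|f-f^*|$) and the variance bound, with the residual terms absorbed via the lower bounds on $r(A,\delta)$ built into Definition~\ref{def:comp}. The only addition is that you make explicit the step the paper leaves implicit in "the proof for $\Omega_{\cO}$ uses exactly the same arguments", namely passing from $\textnormal{Rad}_{\cO}$ to $\textnormal{Rad}_{\cI}$ through the i.i.d.\ design and the zero-mean/Jensen monotonicity of Rademacher complexities, which is a correct and welcome clarification.
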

\begin{proof}
	Let $(\sigma_i)_{i \in \cI}$ be i.i.d Rademacher random variables independent to $(X_i)_{i \in \cI}$, from the symmetrization and contraction Lemmas (see~\cite{ledoux2013probability})
	\begin{align*}
		\bE & \sup_{f  \in \cF_r} |(P_{\cI}-P)\cL_f|  \leq 4L  \bE \sup_{f \in \cF }  \enspace \frac{1}{|\cI|} \sum_{i \in \cI} \sigma_i (f-f^*)(X_i) \leq c \frac{r^2(A,\delta)}{A}
	\end{align*}
	where we used the of $r_{\cI}^B(\cdot)$ and the fact that $r^b(A,\delta)\geq r_{\cI}^B(A)$ for all $A >0$. Under the local bounded assumption, any function $f$ in $\cF_r$,  $|\cL_f(x,y)|  \leq L$ for all $(x,y) \in \cX \times \cY$.   For any $t>0$, it follows from Theorem~\ref{thm:bousquet} that for any function $f$ in $\cF_r$
	\begin{align*}
		|(P_{\cI}-P)\cL_f | & \leq c \bigg[ \frac{(r^b(A,\delta))^2}{A} + \frac{L \log(1/\delta)}{N} +  L \sqrt{\frac{\log(1/\delta)}{|\cI|} \bigg( (r^b( A,\delta))^2 +   \frac{(r^b( A,\delta))^2}{AL}   \bigg)} \bigg]   \\
		& \leq c  \frac{(r^b(A,\delta))^2}{A}  \enspace. 
	\end{align*}
	The proof for $\Omega_{\cO}$ uses exactly the same arguments. 
\end{proof}

\subsection{Proof Theorem~\ref{thm:reg} in the sub-Gaussian framework} \label{sec:proof2}
Recall that $\tilde r(A,\rho^*,\delta)$ is such that: 
\[
\tilde r(A,\rho^*, \delta) \geq \tilde  r_{\cI}^{SG}(A,\rho^*) \vee AL \sqrt \frac{\log(1/\delta)}{N} \vee AL \frac{| \cO| }{N} \enspace, 
\]
where $\rho^*$ satisfying the $A,\delta$-sparsity equation with $A$ verifying assumption~\ref{assum:fast_rates_reg}.\\

The proof is split into two parts and is very similar as the one of Theorem~\ref{thm:main}. First we identify a stochastic argument holding with large probability. Then, we show on that event that $ \hat f_N^\lambda \in F \cap \big( f^* + \rho^* \bB \cap \tilde r(A,\rho^*,\delta)  \bB_2 \big) $. Then, at the very end of the proof we will control the excess risk $P\cL \hat f_N^\lambda$ where $ \hat f_N^\lambda$ is defined in equation~\eqref{def:RERM}. 

\paragraph{Stochastic arguments}
The stochastic part is the same as the one in the proof of Theorem~\ref{thm:main} where a localization with respect to the regularization norm is added. First we identifiate the stochastic event onto which the proof easily follows. Let $ \cF_{ r,\rho} = F \cap \big( f^* + \rho^* \bB \cap \tilde r(A,\rho^*,\delta)  \bB_2 \big) $ and define 

\begin{align*} 
& \Omega_{\cI} = \sup_{f \in \cF_{r,\rho}}  \bigg| \big(P-P_{\cI}  \big) \big( \ell_f - \ell_{f^*}   \big)  \bigg|  \leq c\frac{L}{\sqrt{| \cI |}} \bigg( w(\cF_{r,\rho}) + \tilde r(A,\rho^*,\delta)  \sqrt{\log(1/\delta)}  \bigg) \\
&\Omega_{\cO} = 	\sup_{f \in \cF_{r,\rho}} \bigg| \big(P-P_{\cO}  \big) |f- f^*|  \bigg|  \leq \frac{c}{\sqrt{| \cO |}} \bigg( w(\cF_{r,\rho}) + \tilde r(A,\rho^*,\delta)  \sqrt{\log(1/\delta)}  \bigg)
\end{align*}
Finally, set $\Omega = \Omega_{\cI} \cap \Omega_{\cO}$
\begin{Lemma} \label{lem:sto_arg_reg}
	Grant Assumptions~\ref{assum:distri},~\ref{assum:convex},~\ref{assum:lip_conv} and assume that $F-f^*$ is $1-$sub-Gaussian. Then the event $\Omega$ holds with probability larger than $1- \delta$
\end{Lemma}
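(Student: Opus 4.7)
The plan is to mimic exactly the proof of Lemma~\ref{lem:sto_arg} from Section~\ref{sec:proof1}, replacing the $L_2$-ball localization $\cF_r = F\cap(f^*+r(A,\delta)\bB_2)$ by the doubly-localized class $\cF_{r,\rho} = F\cap(f^*+\rho^*\bB\cap \tilde r(A,\rho^*,\delta)\bB_2)$. The two key observations that make the argument transfer immediately are: (i) $\cF_{r,\rho}\subset \cF_r$ for $r=\tilde r(A,\rho^*,\delta)$, so the $L_2$-diameter of $\cF_{r,\rho}$ is still bounded by $\tilde r(A,\rho^*,\delta)$; and (ii) the Gaussian mean-width of a subset can only decrease, hence the complexity term that appears in the deviation inequality naturally becomes $w(\cF_{r,\rho})$.

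More precisely, I would first bound the $\Omega_{\cI}$ deviation. The process $X_f := (P-P_{\cI})(\ell_f-\ell_{f^*})$ indexed by $f\in \cF_{r,\rho}$ satisfies a sub-Gaussian increment condition. Indeed, by Lemma~\ref{lem:chafai1} the $\psi_2$-norm of a centered sum of independent random variables is controlled by the $\psi_2$-norm of one summand, and then the combination of the symmetrization principle, the contraction principle (using that $\ell$ is $L$-Lipschitz by Assumption~\ref{assum:lip_conv}), and the $1$-sub-Gaussian assumption on $F-f^*$ yields, via Lemma~\ref{lem:chafai2},
\begin{equation*}
\|X_f-X_g\|_{\psi_2} \;\leq\; \frac{cL}{\sqrt{|\cI|}}\,\|f-g\|_{L_2},
\end{equation*}
for every $f,g\in \cF_{r,\rho}$. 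Applying Theorem~\ref{thm_generic_chaining} (generic chaining in Orlicz norm) to this process gives, with probability at least $1-\delta/2$,
\begin{equation*}
\sup_{f\in\cF_{r,\rho}} |X_f| \;\leq\; \frac{cL}{\sqrt{|\cI|}}\Bigl(w(\cF_{r,\rho}) + \sqrt{\log(1/\delta)}\,D_{L_2}(\cF_{r,\rho})\Bigr),
\end{equation*}
and the definition of $\cF_{r,\rho}$ forces $D_{L_2}(\cF_{r,\rho})\leq \tilde r(A,\rho^*,\delta)$, which is exactly the bound defining $\Omega_{\cI}$.

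For $\Omega_{\cO}$, since $(X_i)_{i\in\cO}$ are i.i.d.\ with law $\mu$ by Assumption~\ref{assum:distri}, I apply the same scheme to the process $Y_f := (P-P_{\cO})|f-f^*|$. Here I skip the contraction step (the Lipschitz constant is $1$ and the loss is not involved), so the $\psi_2$ increment bound reads $\|Y_f-Y_g\|_{\psi_2}\leq (c/\sqrt{|\cO|})\|f-g\|_{L_2}$. Generic chaining again gives the desired control with probability at least $1-\delta/2$. A union bound yields $\bP(\Omega_{\cI}\cap\Omega_{\cO})\geq 1-\delta$.

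I don't expect a genuine obstacle: this is a verbatim adaptation of the non-regularized proof. The only subtlety to flag is the verification of the sub-Gaussian increment condition on $X_f$, for which one has to carry symmetrization $\to$ Rademacher contraction $\to$ sub-Gaussian moment bound in the right order, and to check that the constant absorbed in $c$ remains absolute (independent of $A$, $L$, $\rho^*$, $\delta$). Everything else — the localization to $\cF_{r,\rho}$, the appearance of $w(\cF_{r,\rho})$, the diameter bound — follows formally.
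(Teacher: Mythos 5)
Your proposal is correct and is essentially the paper's own argument: the paper proves Lemma~\ref{lem:sto_arg_reg} by stating that one repeats the proof of Lemma~\ref{lem:sto_arg} verbatim, simply adding the localization with respect to the regularization norm and using the definition of $\tilde r(A,\rho^*,\delta)$ from Definition~\ref{def:comp_reg}. Your step-by-step adaptation (the $\psi_2$ increment bound via Lemmas~\ref{lem:chafai1} and~\ref{lem:chafai2}, symmetrization and contraction, generic chaining with $w(\cF_{r,\rho})$ and the $L_2$-diameter bound, the same scheme without contraction for $\Omega_{\cO}$, then a union bound) is exactly that adaptation.
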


\begin{proof}
	The proof is exactly the same as the one in the non-regularized setup where a localization with respect to the regularization norm is added. It is enough to adapt the proof with the definition of $\tilde{r}(A,\rho^*,\delta)$ from Equation~\eqref{def:comp_reg}. 
\end{proof}	

\paragraph{Deterministic argument}
In this paragraph we place ourselves on the event $\Omega$. Let us recall that for any function $f$ in $F$
\begin{equation}
P_N \cL_{f}^{\lambda} = P_N (\ell_f - \ell_{f^*}) + \lambda (\|f\| - \|f^*\| )
\end{equation}
From the definition of $\hat{f}_N^{\lambda}$, we have $P_N \cL^{\lambda}_{\hat{f}_N^{\lambda}} \leq 0$. To show that $\hat f_N^{\lambda}  \in \cF_{r,\rho}$ it is sufficient to show that for all functions $f \in F \backslash \cF_{r,\rho}$ we have $P_N \cL_f^{\lambda} >0$. Let $f$ in $f \in F \backslash \cF_{r,\rho}$. By convexity of $F$ there exist a function $f_1$ in $F$ and $\alpha \geq 1$ such that $\alpha (f_1-f^*) = f-f^*$ and $f_1 \in \partial  \cF_{r,\rho}$ where $\partial  \cF_{r,\rho}$ denotes the border of $ \cF_{r,\rho}$. Using the same convex argument as the one in the proof of Theorem~\ref{thm:main} we obtain:
\begin{equation*}
	P_N \cL_f \geq \alpha P_N \cL_{f_1} \enspace,
\end{equation*}
for $f_1 \in  \partial \cF_{r,\rho}$. Moreover, by the triangular inequality we obtain
\begin{equation*}
	\|f\| - \| f^*\| \geq \alpha (\|f_1\| - \|f^*\|  ),
\end{equation*}
and thus,
\begin{equation*}
	P_N \cL_f^{\lambda } \geq \alpha P_N \cL_{f_1}^{\lambda}
\end{equation*}
Therefore it is enough to show that $P_N \cL_{f_1}^{\lambda} >0 $ for $f_1 \in \partial \cF_{r,\rho}$. By definition of $\partial \cF_{r,\rho}$, there are two different cases 1) $f_1 \in F \cap \big(  f^* + \rho^* \bS \cap \tilde{r}(A,\rho^*,\delta) \bB_2 \big) $ and 2) $f_1 \in F \cap \big(  f^* + \rho^* \bB \cap \tilde{r}(A,\rho^*,\delta) \bS_2 \big) $. In 1) the sparsity equation will help us to show that $P_N \cL_{f_1}^{\lambda} >0$ while in 2), it will be the local Bernstein condition.\\

 Let us begin by the case 1). Let $f_1 \in F \cap \big(  f^* + \rho^* \bS \cap \tilde{r}(A,\rho^*,\delta) \bB_2 \big) $.

\begin{align*}
	P_N \cL_{f_1} = \frac{|\cI|}{N} P_{\cI} \cL_{f_1} + \frac{|\cO|}{N} P_{\cO} \cL_{f_1} \geq \frac{|\cI|}{N} P_{\cI} \cL_{f_1} -L \frac{|\cO|}{N} P_{\cO} |f_1-f^*|
\end{align*}
On $\Omega_{\cI}$ if holds that 
\begin{align}  \label{eq1_reg}
	P_{\cI} \cL_{f_1} \geq \frac{| \cI |}{N} \bigg[  -  c\frac{L}{\sqrt{ | \cI| }} \bigg( w \big( \cF_{r,\rho}  \big)  + \tilde r(A,\rho^*, \delta) \sqrt{\log(1/ \delta)} \bigg)   \bigg] \geq - c\frac{ \tilde  r ^2(A,\rho^*,\delta)}{A} \enspace,
\end{align}
where we used the definitions of $\tilde r_{\cI}^B(A,\rho^*)$ and $\tilde r(A,\rho^*,\delta)$. \\
On $\Omega_{\cO}$, it holds that
\begin{align*} 
-L \frac{|\cO|}{N} P_{\cO} |f_1-f^*|  & \geq -L \frac{|\cO|}{N}   \bigg[ \|f_1-f^*\|_{L_1(\mu)}  -  \frac{c}{\sqrt{ | \cO| }} \bigg( w \big( \cF_{r,\rho}  \big)  + \tilde r(A,\rho^*, \delta) \sqrt{\log(1/ \delta)} \bigg)   \bigg] \\
& \geq -L \frac{| \cO |}{N} \tilde r(A,\rho,\delta) - c\frac{\tilde r^2(A,\rho,\delta)}{A}  \\
& \geq - c\frac{\tilde r^2(A,\rho,\delta)}{A} \enspace,
\end{align*}	
where we also used the definitions of $\tilde r_{\cI}^B(A,\rho^*)$ and $\tilde r(A,\rho^*,\delta)$ and the fact that $| \cO | \leq |\cI|$. Consequently, we get
\begin{align*}
	P_N \cL_{f_1}  \geq - c \frac{\tilde r^2(A,\rho,\delta)}{A} \enspace,
\end{align*}
where the constant $c>0$ is chosen such that $c < 7/17$. \\

Let us turn to the control of $\lambda (\|f_1\| - \|f^*\|)$. Recall that we are in the case where $\norm{f_1-f^*}=\rho^*$ and $\norm{f_1-f^*}_{L_2}\leq \tilde r(A, \rho^*,\delta)$. Let $v\in E$ be such that $\norm{f^*-v}\leq \rho^*/20$ and $g\in \partial (\norm{\cdot})_v$. We have
\begin{align*}
	&\norm{f_1}-\norm{f^*} \geq \norm{f_1} - \norm{v} - \norm{f^*-v}\geq \inr{g,f_1-v} - \norm{f^*-v}\\
	&\geqslant\inr{g,f_1-f^*} - 2\norm{f^*-v}\geqslant\inr{g,f_1-f^*} - \rho^*/10\enspace.
\end{align*}
As the latter result holds for all $v\in f^*+(\rho^*/20)\bB$ and $g\in \partial \norm{\cdot}(v)$, since $f_1 \in \cF_{r,\rho}$, we get
\begin{equation*}
	\norm{f_1}-\norm{f^*}\geq \Delta(\rho^*) - \rho^*/10\geq 7\rho^*/10\enspace,
\end{equation*}
where the last inequality holds because $\rho^*$ satisfies the sparsity equation. Finally we obtain
\begin{align} \label{eq_first_case_reg}
	P_N \cL_{f_1}^{\lambda } \geq - c\frac{\tilde r^2(A,\rho,\delta)}{A} - \frac{7}{10}\lambda \rho^* > 0 \enspace,
\end{align}
when $\lambda \geq (10c/7) (\tilde r^2(A,\rho,\delta)/(A\rho^*))$.\\

Let us turn to the second case 2). Let $f_1 \in F \cap \big(  f^* + \rho^* \bB \cap \tilde{r}(A,\rho^*,\delta) \bS_2 \big) $. With the same analysis as 1), on $\Omega$, from Assumption~\ref{assum:fast_rates_reg} it follows that
\begin{align*}
	P_N \cL_{f_1} \geq  \frac{\tilde r^2(A,\rho^*,\delta)}{A} - c \frac{\tilde r^2(A,\rho^*,\delta)}{A}  \enspace,
\end{align*}
where the constant $c$ is the same as the one appearing in Equation~\eqref{eq_first_case_reg}.
As $\|f_1\| - \|f^*\| \geq - \|f_1-f^*\| \geq - \rho^*$, it follows that
\begin{align*}
	P_N \cL_{f_1}^{\lambda} \geq  (1-c)  \frac{\tilde r^2(A,\rho^*,\delta)}{A} - \lambda \rho^*  > 0  \enspace, 
\end{align*}
when $\lambda \leq (1-c)  (\tilde r^2(A,\rho^*,\delta)/(A\rho^*))$. Note that the condition $ c < 7/17 $ implies that such a $\lambda > 0$ exists.\\

We finish the proof by establishing the result for the excess risk. Since $\hat f_N^{\lambda} \in F \cap \big(  f^* + \rho^* \bB \cap \tilde{r}(A,\rho^*,\delta) \bB_2 \big) $, on $\Omega$
\begin{align*}
	P\cL_{\hat{f}_N^{\lambda}} \leq P_{\cI}  \cL_{\hat{f}_N^{\lambda}} + c\frac{\tilde r^2(A,\rho^*,\delta)}{A}
\end{align*}
Moreover we have
\begin{align*}
	P_{\cI}  \cL_{\hat{f}_N^{\lambda}} & = \frac{N}{|\cI|} P_N \cL_{\hat{f}_N^{\lambda}} - \frac{|\cO|}{|\cI|} P_{\cO} \cL_{\hat{f}_N^{\lambda}} = \frac{N}{|\cI|} P_N\cL_{\hat{f}_N^{\lambda}}^{\lambda }  + \lambda\frac{N}{|\cI|}(\|f^*\|- \| \hat{f}_N^{\lambda}\|  ) - \frac{|\cO|}{|\cI|} P_{\cO} \cL_{\hat{f}_N^{\lambda}}  \\
	& \leq  2\lambda \rho^*+ L\frac{|\cO|}{|\cI|} P_{\cO} |\hat{f}_N^{\lambda} - f^*|   \\
	& \leq  2\lambda \rho^*+  c \frac{\tilde r^2(A,\rho^*,\delta)}{A}  \\
	& = c  \frac{\tilde r^2(A,\rho^*,\delta)}{A}\enspace,
\end{align*}
which concludes the proof for the excess risk. 

\subsection{Proof Theorem~\ref{thm:reg} in the local bounded setting.}
The proof consists in taking the stochastic argument from the proof of Theorem~\ref{thm:main}  (and adding the localization with respect to the regularization norm) and the deterministic argument from the proof of Theorem~\ref{thm:reg}.

\begin{footnotesize}
	\bibliographystyle{plain}
	\bibliography{biblio_reg}
\end{footnotesize}

\end{document}